\newtheorem{thm}{Theorem}
\newtheorem{proposition}[thm]{Proposition}
\newtheorem{lem}{Lemma}
\theoremstyle{example}
\newtheorem{exam}{Example}
\theoremstyle{books}
\theoremstyle{definition}
\newtheorem{defn}{Definition}
\newtheorem*{conj}{Pisot Substitution Conjecture}
\newtheorem*{strongconj}{Strong Coincidence Conjecture}
\theoremstyle{remark}
\newtheorem{rem}{Remark}[section]
\def\vector#1{\mbox{\boldmath $#1$}}
\begin{document}

\title[Pisot substitution conjecture]
{Pisot Substitution Conjecture and Rauzy Fractals}

\author[K.~Nakaishi]{ Kentaro Nakaishi}

\address{Department of Mathematics, College of Humanities and Sciences,
Nihon University, Japan}
\email{nakaishi.kentarou@nihon-u.ac.jp}

\date{\today}

\thanks{2020 {\it Mathematics Subject Classification}:37A25,37A30,37A44,11F85,28A80,05B45,68Q45}

\begin{abstract}
We provide a proof for one version of Pisot conjecture.
We make use of the weak mixing property of the subshift of finite type derived from the prefix-suffix automaton 
to conclude that the substitution dynamical system has pure discrete spectrum.
\end{abstract}

\maketitle


\section{Introduction}

Recurrence is one of the main concerns in Ergodic theory.
In this paper, we study the classification of
uniformly recurrent sequences generated by substitutions,
based on their spectral type.

There are a group of problems called {\it Pisot conjecture}
to which we recommend \cite{akiyama2015pisot} as a general reference.
Among various versions of Pisot conjecture and related problems,
we specifically address the followings.
\begin{conj}If $\sigma$ is an irreducible Pisot substitution, then
the substitution dynamical system has pure discrete spectrum (or
 pure point spectrum).
\end{conj}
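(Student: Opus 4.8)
The statement is an open conjecture, so what follows is the strategy I would pursue rather than a finished argument. Fix a one-sided fixed point $u=\sigma(u)$ and let $(X_\sigma,S,\mu)$ be the associated minimal subshift with its unique $S$-invariant probability measure; write $A=\{1,\dots,d\}$, let $M_\sigma$ be the incidence matrix and $\beta$ its Pisot dominant eigenvalue, so that irreducibility means the characteristic polynomial of $M_\sigma$ is the degree-$d$ minimal polynomial of $\beta$. The first reduction I would carry out is classical: the geometric (Rauzy-fractal) realization always yields a factor map $\pi\colon(X_\sigma,S)\to(\mathbb{T}^{d-1},R)$ onto a minimal torus rotation $R$, and $(X_\sigma,S,\mu)$ has pure discrete spectrum if and only if $\pi$ is a measure isomorphism, equivalently if and only if $\mu$-almost every fiber $\pi^{-1}(\pi(x))$ is a singleton. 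The whole problem is thus to promote the factor $\pi$ to an isomorphism, that is, to establish the geometric-coincidence (tiling) property of the Rauzy fractal.

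Next I would set up the combinatorial engine. Form the prefix-suffix automaton $\mathcal{A}$ whose states are the letters of $A$ and whose edges $a\xrightarrow{(p,b,s)}b$ record the factorizations $\sigma(a)=p\,b\,s$, and let $(\Sigma,T)$ be the two-sided edge subshift of finite type of bi-infinite admissible paths. The Dumont--Thomas expansion furnishes a continuous surjection $\phi\colon\Sigma\to X_\sigma$, injective off a countable set, for which the shift $T$ implements the \emph{desubstitution/renormalization} rather than the shift $S$; under $\phi$ the contracting Rauzy coordinate is recovered from a path by summing prefix contributions through the conjugate embeddings of $M_\sigma$,
\[
\pi(\phi(\omega))=\sum_{n\ge 0}\Xi^{\,n}\,\iota\bigl(\ell(p_n)\bigr),\qquad \omega=(p_n,a_n,s_n)_{n\ge 0},
\]
where $\ell$ is abelianization, $\iota$ the projection onto the contracting subspace, and $\Xi$ the contraction induced there by $M_\sigma$. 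In particular $(\Sigma,T)$ is the symbolic model of the self-similarity of the Rauzy fractal and is a genuinely different system from $(X_\sigma,S)$, so passing information from $T$ to $S$ carries no automatic contradiction.

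The weak-mixing property of the SFT then turns the coincidence question into a zero-one statement. Primitivity of $\sigma$ together with the Pisot condition makes $\mathcal{A}$ aperiodic (the $\gcd$ of its cycle lengths equals $1$), so $(\Sigma,T)$ is topologically mixing and the Markov measure $\nu$ that projects to $\mu$ is weakly mixing for $T$. I would encode the failure of coincidence as the renormalization-invariant event
\[
C=\bigl\{(\omega,\omega')\in\Sigma\times\Sigma:\ \phi(\omega)\neq\phi(\omega'),\ \pi(\phi(\omega))=\pi(\phi(\omega'))\bigr\},
\]
stable under $T\times T$ and carrying a natural self-joining $\widehat{\nu}$. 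Since eigenfunctions of $(X_\sigma,S,\mu)$ are automatically measurable with respect to the $\sigma$-algebra generated by $\pi$, pure discrete spectrum is equivalent to $\widehat{\nu}(C)=0$; and because the off-diagonal joining lives over the weakly mixing base $(\Sigma,T,\nu)$, a zero-one dichotomy applies to $C$: either $\widehat{\nu}(C)=0$ and we are done, or the coincidence defect is self-similar of full measure.

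The main obstacle is excluding this bad alternative, and here the arithmetic of $\beta$ and the irreducibility hypothesis must finally do real work: the conjugate contraction rates packaged in $\Xi$ govern the overlap measure of the Rauzy subtiles, and one needs a separation estimate of Garsia type, or a terminating balanced-pair/coincidence algorithm, to force those overlaps to be $\nu$-null. Propagating the zero-one law down the renormalization while controlling the fractal boundaries of the subtiles, whose mutual intersections are a priori uncountable, is the genuinely hard step, and it is precisely where existing approaches to the full conjecture stall. My expectation is that the weak-mixing dichotomy reduces everything to a single no-overlap estimate at the top scale, but establishing that one estimate uniformly across all irreducible Pisot substitutions $\sigma$ is the crux.
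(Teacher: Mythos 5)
You have assembled the correct known skeleton---the prefix--suffix subshift of finite type whose shift implements desubstitution, weak mixing of the Markov (Parry) measure, and the reduction of pure discrete spectrum to a coincidence/tiling statement for the Rauzy fractal---and this is indeed the frame the paper works in. But two concrete problems separate your sketch from a proof. First, your representation space is wrong for the general case: the factor onto a torus rotation $\mathbb{T}^{d-1}$ is adequate only for unimodular $M_{\sigma}$; the paper works in the ad\`{e}lic space $K_{\sigma}$, including the non-archimedean completions at the primes dividing $(\alpha)$, precisely so that the Garsia-type estimate (Lemma~\ref{adelicgarsia}) survives when $\det M_{\sigma}\neq\pm 1$. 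Second, and more seriously, the set $C$ on which you want to run a zero--one dichotomy is \emph{not} $(T\times T)$-invariant: desubstitution transforms the contracting coordinate by $\Psi(T\omega)=\alpha^{-1}\bigl(\Psi(\omega)-\Phi^{\prime}(\langle f(p_{0}),\vector{v}\rangle)\bigr)$, so the equality $\pi(\phi(\omega))=\pi(\phi(\omega^{\prime}))$ is destroyed whenever the initial prefixes differ; there is no ``renormalization-invariant event'' and no zero--one law to invoke. The paper circumvents exactly this by fattening the (null) coincidence set to an open set $\Delta_{\epsilon}$ of positive $m\times m$-measure (Section~\ref{perturbation}) and using only Birkhoff recurrence of the ergodic product system to a cylinder $\mathcal{C}\times\mathcal{C}$, never invariance.

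The ``single no-overlap estimate'' you defer as the crux is precisely the paper's new content, and it is not obtained at the top scale from a dichotomy of joinings; it is a quantitative entry-time argument. A special cylinder $\mathcal{C}=\langle(u_{0}:\emptyset)\cdots(u_{0}:\emptyset)\rangle$ of length $L+1$, with $L$ calibrated against the Garsia bound, yields Theorem~\ref{either}: at any simultaneous visit time $d$ to $\mathcal{C}\times\mathcal{C}$, either the integer polynomial part of $\Psi(\omega_{1})-\Psi(\omega_{2})-\gamma$ at $d$ vanishes, or $\Psi(\omega_{2})+\gamma$ escapes an explicit $d$-neighborhood $O_{d+1}(\Psi(\omega_{1}))$. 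Assuming the polynomial parts never vanish then \emph{distorts the distribution of the first entry time} $\tau_{2}$ to $\mathcal{C}\times\mathcal{C}$: a path-counting computation via the spectral expansion of $A^{k}$ (Theorem~\ref{asympt}) shows the first-entry masses along the visit times $\{N_{k}\}$ of $x$ exhaust the full cylinder measure (Proposition~\ref{s_infty}), contradicting the forced delay $\tau_{2}\geq N_{0}$ on the fiber (Proposition~\ref{distort}). Finally, a vanishing polynomial part alone does not finish the argument: one still needs Lemma~\ref{latticeII} to pass from the ad\`{e}lic identity to the abelianized equation $f(\sigma^{d-1}(p_{d-1})\cdots p_{0})=f(\sigma^{d-1}(q_{d-1})\cdots q_{0})+\vector{w}$, and a case analysis of comparable prefixes of $\sigma^{d}(u_{0})$, using the defining inequalities $\langle\vector{w},\vector{v}\rangle\geq 0$ and $\langle\vector{w}-\vector{e}_{b},\vector{v}\rangle<0$ of $\Gamma$, to exclude $\gamma\neq 0$ and $a\neq b$. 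None of these steps appears in your proposal---as you yourself acknowledge---so what you have is a correct identification of the ambient machinery together with an open gap exactly where the paper's proof lives.
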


\begin{strongconj}Every irreducible Pisot substitution $\sigma$ satisfies the strong coincidence
condition.
\end{strongconj}

We only mention known results which are directly related to these conjectures:
Barge and Diamond \cite{barge2002coincidence} proves Strong Coincidence Conjecture for
two symbol cases ($n=2$ in our notation) and, using it, Hollander and Solomyak \cite{hollander2003two}
solves Pisot Substitution Conjecture for $n=2$. 

Our goal is to prove the two conjectures for general $n\geq 2$
by a geometric and number-theoretical approach combined with Ergodic theory.
This `classical' approach initiated by Rauzy \cite{rauzy1982nombres}
consists of constructing a geometric representation of 
the substitution dynamical system (the Rauzy fractal $\mathcal{R}_{\sigma}$
and the domain exchange transformation
on it), and  showing that
it is  measure-theoretically
conjugate to an ergodic rotation of a compact abelian group.

Along the same lines,
there is another equivalent formulation which we favor in this paper. 
A collection of translations of subtiles $\mathcal{R}_{\sigma}(a)$ of the Rauzy fractal 
\[
\mathcal{T}=\{\mathcal{R}_{\sigma}(a)+\gamma: (\gamma, a)\in\Gamma\}
\]
forms a covering (multiple tiling) of the representation space $K_{\sigma}$.
It is known for irreducible Pisot substitutions
that the substitution dynamical system has pure discrete spectrum if and only if
$\mathcal{T}$ is a single covering or a {\it tiling}.

In previous research, 
general results concerning tiling are only proven {\it under additional conditions}.
The novelty of this paper is a {\it direct} proof of $\mathcal{T}$ being a tiling 
with help of the weak mixing property inherent in the subshift of finite type  
derived from the automaton.
As far as the author knows, this stronger property of ergodicity was overlooked and 
implications of it were not pursued in the literature.
For the sake of clarification, it should be noted that
the subshift does not commute with the substitution dynamical system, but rather
it does with the desubstitution \cite{mosse1996reconnaissabilite}.
It is this structure that determines whether tiling or not.

The paper is organized as follows: Section 2-7 lay the foundation for the conjectures.
The representation space $K_{\sigma}$ is built in the ad\`{e}le ring
of the number field $\mathbb{Q}(\alpha)$.
As Rauzy fractals carry a multitude of mathematical structures, so too can
several definitions of them be made.
The reason why we adopt a seemingly different definition of
Rauzy fractals from the conventional ones (Definition~\ref{rauz})
is that we focus on a structure which previous works neglected.
Their identity is confirmed in Remark~\ref{identity}, so that conventional results about
Rauzy fractals can be inherited.

In Section \ref{perturbation}, we start the proof of tiling.
When a multiple tiling situation
\[
\mathrm{int}\mathcal{R}_{\sigma}(a)\cap\mathrm{int}(\mathcal{R}_{\sigma}(b)+\gamma)\neq\emptyset
\]
occurs, 
the algebraic integer (Pisot number $\alpha$) enforces
restrictions on this intersection ($a=b$ and $\gamma=0$).
In other words, the subtile $\mathcal{R}_{\sigma}(a)$
does not allow intersection
at the interior points except itself.
Which is tantamount to $\mathcal{T}$ being a tiling (Lemma~\ref{t_criterion}
{\it tiling criterion}).
To see this, we regard the intersection as a diagonal set of the product space
$K_{\sigma}\times K_{\sigma}$, and
perturb (fatten) this `thin' set to apply the weak mixing property in the
symbolic space.

Constraints imposed by algebraic integers are described in terms of {\it polynomials}
(Lemma~\ref{adelicgarsia} {\it Garsia's lemma}), while the Rauzy fractals are defined by {\it power series}. 
As a device to bridge this gap, Section~\ref {specialcylinder}-\ref{poly_like} introduce
special cylinders $\mathcal{C}$ by means of
which it is possible to find polynomial-like behavior in 
the power series 
and obtain the power series version of Garsia's lemma, so to speak (Theorem~\ref{either}).

Section \ref{ergodic}-\ref{distorsion} study
the behavior of the first entry time $\tau_{2}$ to $\mathcal{C}\times\mathcal{C}$
from two aspects. First, $\tau_{2}$ is purely an object defined on the product of the subshift.
On the other hand, it reflects the behavior of polynomial parts of 
Rauzy fractals in $K_{\sigma}\times K_{\sigma}$
by means of Theorem~\ref{either}.
These two aspects are compared to conclude the proof of tiling
(Theorem~\ref{distort}).

A {\it cut and project scheme} is a powerful tool for
understanding the mathematical structure of aperiodic tilings or quasicrystals.
In this modern approach, Rauzy fractals are served as 
a candidate for {\it acceptance windows}. 
Section~\ref{modern} outlines equivalence between tiling and pure discrete spectrum through
this modern point of view.

As a byproduct of a direct proof of tiling, Section~\ref{strongcc}
provides a proof to Strong Coincidence Conjecture.

\section{Substitution}\label{1}

In this section, we collect basic facts about substitution. We refer
to \cite{bernard2000substitution} and
\cite{queffelec2010substitution} for details.

Let ${\mathcal A}$ be a finite alphabet and
denote by ${\mathcal A}^{*}$ the set of
all finite words over ${\mathcal A}$ and the empty set $\emptyset$, which is equipped with
the structure of monoid  by concatenation.
We may set $\mathcal{A}=\{1,2,\dots,n\}$ ($n\geq 2$).
A \textit{substitution} (over ${\mathcal A}$) is a mapping
$\sigma:{\mathcal A}\rightarrow {\mathcal A}^{*}$. 
It is easily seen
that $\sigma$ extends to ${\mathcal A}^{*}$ by requiring $\sigma(w_{1}w_{2})
=\sigma(w_{1})\sigma(w_{2})$ and $\sigma(\emptyset)=\emptyset$.
Let $f:{\mathcal A}^{*}\rightarrow \mathbb{Z}^{n}$ is a homomorphism defined by
\[
f(w_{1}w_{2}\dots w_{k})=\vector{e}_{w_{1}}+\vector{e}_{w_{2}}+
\dots +\vector{e}_{w_{k}},\quad f(\emptyset)={\bf 0}
\]
where $\{\vector{e}_{i}\}_{i=1}^{n}$ denotes the canonical basis of $\mathbb{R}^{n}$.
One can associate to $\sigma$ an {\it incident matrix} $M_{\sigma}$  defined by
\[
M_{\sigma}=[f(\sigma(1))\ f(\sigma(2))\ \cdots\ f(\sigma(n))],
\] 
which is sometimes referred to as the \textit{abelianization} of $\sigma$
due to $f\circ\sigma =M_{\sigma}\circ f$.

A substitution $\sigma$ naturally induces a mapping on $\mathcal{A}^{\mathbb{Z}}$
\[
\sigma(u):=\cdots\sigma(u_{-1}).\sigma(u_{0})\sigma(u_{1})\cdots\quad \text{if}\ u=(u_{k})_{k\in\mathbb{Z}}\in
\mathcal{A}^{\mathbb{Z}}
\]
where . indicates the $0$th coordinate.

The $k$ times iterates of $\sigma$ is denoted by $\sigma^{k}$.
A substitution $\sigma$ is \textit{primitive} if there exists $N>0$ so that
for all $i,j\in{\mathcal A}$ 
the word $\sigma^{N}(i)$ contains $j$.
This is equivalent to the fact that $M_{\sigma}^{N}$
is positive. 
For a primitive substitution $\sigma$, we may assume that 
$\sigma$ has a fixed point $u\in\mathcal{A}^{\mathbb{Z}}$:
$\sigma(u)=u$.
The orbit closure of $u$  
\[
\overline{\mathcal{O}_{\sigma}(u)}=\overline{\{S^{k}u\in\mathcal{A}^{\mathbb{Z}}
: k\in\mathbb{Z}\}}.
\]
and the shift $(S\omega)_{i}=(\omega)_{i+1}$ for $\omega=(\omega_{i})_{i\in\mathbb{Z}}\in
\overline{\mathcal{O}_{\sigma}(u)}$
provide a compact dynamical system
which has a unique $S$-invariant ergodic measure 
$\nu$ and is minimal (strictly ergodic).
The triple $(\overline{\mathcal{O}_{\sigma}(u)},S,\nu)$ is called
the \textit{substitution dynamical system} for $\sigma$.

We call $\sigma$  an \textit{irreducible Pisot substitution}
if the characteristic polynomial
of $M_{\sigma}$ is irreducible over $\mathbb{Q}$ and if
its Perron-Frobenius root $\alpha$ (the maximal eigenvalue) is a Pisot number.
Recall that an algebraic integer $\alpha>1$ is a
\textit{Pisot number} (or \textit{Pisot-Vijayaraghavan number}) if
the other conjugates $\alpha_{2},\dots,\alpha_{n}$
are less than one in modulus.
If $\sigma$ is an irreducible Pisot substitution, it is primitive and any
fixed point of $\sigma:\mathcal{A}^{\mathbb{Z}}\rightarrow\mathcal{A}^{\mathbb{Z}}$
is not $S$-periodic (\cite{holton1998geometric}, hence $\# X_{\sigma}=\infty$).

A \textit{prefix} (resp. \textit{suffix}) of the word $w=w_{1}w_{2}\cdots w_{k}$ is either
a subword of $w$,
$w_{1}w_{2}\cdots w_{i}$ for $1\leq i<k$
(resp. $w_{i}w_{i+1}\cdots w_{k}$ for $1<i\leq k$) or
the empty word $\emptyset$.
By $\vert w\vert_{a}$ denote the number of letter $a$ in $w$.

A word $\sigma^{k}(j)$ has {\it prefix-suffix representations} of
the form $\sigma^{k}(j)=pas$
where $p$ is a prefix, $a\in\mathcal{A}$ and $s$ is a suffix.

{\defn\label{strong}[Strong Coincidence]
A substitution $\sigma$ satisfies the \textit{strong coincidence condition}
if, for every pair $(a,b)\in\mathcal{A}\times\mathcal{A}$, 
there exist $k>0$ and $i\in\mathcal{A}$ so that $\sigma^{k}(a)=
pis$ and $\sigma^{k}(b)=p^{\prime}is^{\prime}$
with $\vert p\vert_{j}=\vert p^{\prime}\vert_{j}$
for every $j\in\mathcal{A}$.
}

\section{Subshift Of Finite Type}\label{2}

For the rest of this paper, $\sigma$ denotes an irreducible Pisot substitution
with the incident matrix $M_{\sigma}$ and its Perron-Frobenius root $\alpha$.

For $a\in\mathcal{A}$ let $\mathrm{Pref}(a)$ denote the set of
prefixes for $\sigma(a)$
\[
\mathrm{Pref}(a)=\{
\emptyset, w_{1}^{(a)},\dots, w_{1}^{(a)}w_{2}^{(a)}\cdots w_{l_{a}-1}^{(a)}\}\quad
\mathrm{if}\ 
\sigma(a)=w_{1}^{(a)}w_{2}^{(a)}\dots w_{l_{a}}^{(a)}.
\]
Notice that $l_{a}=\#\mathrm{Pref}(a)$.
Set $\mathrm{Pref}=\bigcup_{a\in\mathcal{A}}\mathrm{Pref}(a)$.

\begin{exam}[Rauzy substitution~\cite{rauzy1982nombres}] Let
$\mathcal{A}=\{1,2,3\}$. Define a substitution
$\sigma_{R}$ by the rule $1\mapsto 12,2\mapsto 13,3\mapsto 1$. Then 
${\rm Pref}=\{\emptyset, 1\}$. 
\end{exam}

By $(a:pbs)$ denote the prefix-suffix representation $\sigma(a)=pbs$.

\begin{rem}
For irreducible Pisot substitutions, given $(a:pbs)$, there exists no
other $a^{\prime}\in\mathcal{A}$
so that $\sigma(a^{\prime})=pbs$: otherwise $M_{\sigma}$ would be degenerate.
We may use a shortened form $(a:p)$ for $(a:pbs)$.
\end{rem}

To describe substitution sequences,
Rauzy~\cite{rauzy1990sequences} constructs the \textit{prefix-suffix automaton} for which
the vertex set is $\mathcal{A}$ and  the edge set is $\mathrm{Pref}$. Each edge $p\in
\rm{Pref}$ starts at a vertex $b\in \mathcal{A}$ and terminates at $a\in \mathcal{A}$
if $pa$ is either a prefix of $\sigma(b)$ or $\sigma(b)$ itself. 
Schematically this is shown as $b\xrightarrow[]{p}a$.

In this paper, however, we do not use
the automaton itself. Instead, following \cite{canterini2001automate},
one can associate a topological Markov chain (subshift of finite type) 
with 
the state space $\mathcal{E}$ and the transition matrix
$A=(a_{IJ})_{I,J\in \mathcal{E}}$
to an irreducible Pisot substitution $\sigma$.

\begin{defn}
Let $\mathcal{E}$ be the set of all possible prefix-suffix representations for $\sigma$ 
\[
\mathcal{E}=\{ (a:p): a\in\mathcal{A}, p\in\mathrm{Pref}(a) \}.
\]
For $J=(b:pas)$ and $I=(b^{\prime}:p^{\prime}a^{\prime}s^{\prime})$, set
\[
a_{IJ}=
\begin{cases}
1\quad \mathrm{if}\ a=b^{\prime}, \\
0\quad \mathrm{otherwise}. \\
\end{cases}
\]
In other words, $a_{IJ}=1$ when $b\xrightarrow[]{p}b^{\prime}$.
For $A=(a_{IJ})_{I,J\in \mathcal{E}}$,
define 
\[
\Sigma_{A}=\Big\{ \omega=(\omega_{k})_{k\geq 0}
\in\prod_{k=0}^{\infty} \mathcal{E}: a_{\omega_{k}\omega_{k+1}}=1\
\mbox{for all}\ k\geq 0\Big\}
\]
with a metric on it given by, for example,
\[
d_{\Sigma_{A}}((\omega_{k})_{k\geq 0},(\omega^{\prime}_{k})_{k\geq 0})=
\sum_{k=0}^{\infty}\frac{1}{2^{k}}(1-\delta_{\omega_{k}\omega^{\prime}_{k}}),
\quad
\delta_{\omega_{k}\omega^{\prime}_{k}}=
\begin{cases}
1\quad \mathrm{if}\quad \omega_{k}=\omega^{\prime}_{k}, \\
0\quad \mathrm{otherwise}.
\end{cases}
\]
The shift $T:\Sigma_{A}\to\Sigma_{A}$ is defined by $T(\omega_{k})_{k\geq 0}=(\omega_{k+1})_{k\geq 0}$.
Then $\Sigma_{A}$ is a $T$-invariant compact metric space
and is referred to as the {\it subshift of finite type}.
\end{defn}

\begin{rem}
In the language of automaton, $\omega=((b_{i+1}:p_{i}a_{i}s_{i}))_{i\geq 0}\in\Sigma_{A}$
is expressed 
in a reverse direction 
\[
\cdots \xrightarrow[]{p_{l}}
b_{l}\xrightarrow[]{p_{l-1}}a_{l-1}=b_{l-1}\xrightarrow[]{p_{l-2}}\cdots
\xrightarrow[]{p_{1}}
a_{1}=b_{1}\xrightarrow[]{p_{0}}a_{0}.
\]
\end{rem}

\begin{exam}For Rauzy substitution $\sigma_{R}$, we have
\[ 
\mathcal{E}=\{ (1:\emptyset),(1:1),(2:\emptyset),(2:1),(3:\emptyset) \}\ \mathrm{with}\ \quad D=\# \mathcal{E}=5.
\]
Then the transition matrix $A$ turns out to be
\[
\begin{pmatrix} 1 &0 & 1 & 0 & 1\\
 1 & 0 & 1 & 0 & 1 \\
 0 & 1 & 0 & 0 & 0 \\
 0 & 1 & 0 & 0 & 0 \\
0 & 0 & 0 & 1 & 0
\end{pmatrix}.
\]
\end{exam}

\begin{defn}
Given $l>0$, an {\it admissible path} of length $l$
is a $l$-string of the elements
\[
(b_{1}:p_{0}a_{0}s_{0}),\dots,(b_{l}:p_{l-1}a_{l-1}s_{l-1}) \in \mathcal{E}
\]
with $a_{k}=b_{k}$ for $1\leq k\leq l-1$. 
\end{defn}

\begin{defn}
Let $p_{1},p_{2}\in\mathcal{A}^{*}$. 
The notation $p_{1}\prec p_{2}$ will mean that $p_{1}$ is a prefix of $p_{2}$.
Similarly $p_{1}\preceq p_{2}$ will be used if and only if $p_{1}$ is a prefix of $p_{2}$
or $p_{2}$ itself.
\end{defn}

Put $A^{k}=(a_{IJ}^{(k)})_{I,J\in \mathcal{E}}$.
It is standard that the number of admissible paths of length $k+1$ 
starting from the vertex
$I$ to $J\in \mathcal{E}$ is given by $a_{IJ}^{(k)}$.

\begin{lem}\label{irraper}
$A$ is irreducible and aperiodic \rm{(}$A^{N+1}>0$\rm{)}.
\end{lem}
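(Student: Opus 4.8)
The plan is to express the entries of the powers $A^{k}$ in terms of the incidence matrix $M_{\sigma}$ and then invoke primitivity of $\sigma$. The starting point is to read the prefix-suffix automaton as a (multi)graph on the vertex set $\mathcal{A}$: the edges from $b$ to $a$ are exactly the prefixes $p$ with $pa\preceq\sigma(b)$, and these are in bijection with the occurrences of $a$ in $\sigma(b)$. Hence the number of edges $b\xrightarrow{p}a$ equals $|\sigma(b)|_{a}=(M_{\sigma})_{ab}$, so that $M_{\sigma}$ is precisely the adjacency matrix of the automaton (with $(M_{\sigma})_{ab}$ counting the edges $b\to a$). Consequently the number of length-$m$ automaton paths from a vertex $b$ to a vertex $a$ equals $(M_{\sigma}^{m})_{ab}$.

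The key step is to match admissible paths in $\Sigma_{A}$ with paths in the automaton, keeping track of the reversal of direction recorded in the Remark following the definition of $\Sigma_{A}$. For a state $I=(a':p'b's')\in\mathcal{E}$ write $s(I)=a'$ for its source vertex and $t(I)=b'$ for its terminal (central) vertex, so that $I$ is the edge $a'\xrightarrow{p'}b'$. Reading an admissible path $\omega_{1}=I,\dots,\omega_{k+1}=J$ backwards yields a single automaton path of $k+1$ edges whose first edge is $J$ and whose last edge is $I$; the only freedom lies in the intermediate path of length $k-1$ joining $t(J)$ to $s(I)$. Counting these and using the previous paragraph gives
\[
a_{IJ}^{(k)}=(M_{\sigma}^{k-1})_{s(I),\,t(J)},
\]
which one checks directly in the boundary case $k=1$, where it reduces to $a_{IJ}=\delta_{s(I),t(J)}$, i.e.\ to the defining rule $a_{IJ}=1\iff t(J)=s(I)$. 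Getting this orientation and the index bookkeeping right is the only delicate point; it is here that the \emph{reversal} between the subshift dynamics and the automaton must be handled carefully.

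Finally, primitivity of $\sigma$ provides $N>0$ with $M_{\sigma}^{N}>0$. Taking $k=N+1$ in the displayed identity yields $a_{IJ}^{(N+1)}=(M_{\sigma}^{N})_{s(I),t(J)}>0$ for every pair $I,J\in\mathcal{E}$, that is, $A^{N+1}>0$. Since a nonnegative square matrix admitting a strictly positive power is primitive, $A$ is both irreducible and aperiodic, which proves the lemma. The main obstacle is establishing the path-counting identity above; once it is in place the conclusion is immediate from Perron--Frobenius theory.
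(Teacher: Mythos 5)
Your proof is correct, and it takes a somewhat different route from the paper's. For given $J=(j:pas)$ and $I=(i:qbt)$, the paper only needs to \emph{exhibit} one admissible path: it invokes the Dumont--Thomas/Rauzy representation of prefixes (citing \cite{dumont1989systemes} and \cite{rauzy1990sequences}) to realize an occurrence of $i$ in $\sigma^{N}(a)$ as an admissible path of length $N$ from $a=t(J)$ to $i=s(I)$ in the automaton, and then attaches a transition from $I$ at one end and to $J$ at the other, giving $a^{(N+1)}_{IJ}>0$. You instead prove the exact counting identity $a_{IJ}^{(k)}=(M_{\sigma}^{\,k-1})_{s(I),\,t(J)}$ by an elementary bijection: edges $b\to a$ of the automaton, viewed as a multigraph, are in bijection with occurrences of $a$ in $\sigma(b)$, so its adjacency matrix is $M_{\sigma}$, and an admissible word in $\Sigma_{A}$ from $I$ to $J$ read backwards is an automaton path with first edge $J$, last edge $I$, and a free intermediate path of $k-1$ edges from $t(J)$ to $s(I)$. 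You handle the orientation reversal correctly ($a_{IJ}=1\iff t(J)=s(I)$ matches the paper's definition, as one can verify on the Rauzy example). The path skeleton underlying both arguments is identical --- an $N$-step automaton path from $t(J)$ to $s(I)$ sandwiched between the edges $J$ and $I$, with positivity supplied by $M_{\sigma}^{N}>0$ --- but your version is self-contained (no appeal to the Dumont--Thomas numeration results) and quantitatively sharper: it computes every entry of $A^{k}$ rather than just establishing positivity, and it is consistent with, indeed essentially re-derives the structure behind, Theorem~\ref{asympt}. What the paper's route buys is brevity given the cited references; what yours buys is independence from them together with the exact count.
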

\begin{proof}
For any $J=(j:pas)$ and $I=(i:qbt)$, we need to show $a^{(N+1)}_{IJ}>0$.
Since $\sigma^{N}(a)$ contains $i$ ($M_{\sigma}^{N}>0$), there exists a prefix
$p_{1}$ with $\sigma^{N}(a)\succeq p_{1}i$.
By \cite{dumont1989systemes} and \cite{rauzy1990sequences},
there exists an admissible path $(a_{i+1}:p_{i}a_{i}s_{i})\ (0\leq i\leq N-1)$
so that
\[
p_{1}=\sigma^{N-1}(p_{N-1})\sigma^{N-2}(p_{N-2})\cdots p_{0},\quad a_{0}=i\quad \mathrm{and}
\quad a_{N}=a.
\]
Since it is possible to make transitions from $(a_{N}:p_{N-1}a_{N-1}s_{N-1})$ to $J$
and from $I$ to $(a_{1}:p_{0}a_{0}s_{0})$,
it follows that $a^{(N+1)}_{IJ}>0$.
\end{proof}

\section{Properties Of $A$}

As the characteristic polynomial of $M_{\sigma}$ is irreducible over $\mathbb{Q}$, 
the incident marix $M_{\sigma}$ has only simple eigenvalues.
Let 
\[
(\alpha_{1},\alpha_{2},\ldots,\alpha_{n})
=
(\alpha,\alpha_{2},\ldots,\alpha_{r},\alpha_{r+1},
\overline{\alpha_{r+1}},\ldots,\alpha_{r+s},
\overline{\alpha_{r+s}})\in \mathbb{R}^{r}\times\mathbb{C}^{2s}
\]
be the eigenvalues of $M_{\sigma}$ with $r+2s=n$.
Let $\vector{u}_{i}={}^{t}(u_{1}(i),\dots,u_{n}(i))$ be
an eigenvector of $M_{\sigma}$ for $\alpha_{i}$
\begin{equation}\label{eigen_eq}
M_{\sigma}\vector{u}_{i}=\alpha_{i}\vector{u}_{i}\quad(1\leq i\leq r+s).
\end{equation}
Similarly define $\vector{v}_{i}$ so that 
${}^{t}M_{\sigma}\vector{v}_{i}=\alpha_{i}\vector{v}_{i}$.
It is well-known that $\vector{u}_{1}>0$ and $\vector{v}_{1}>0$
(all positive coordinates) by Perron-Frobenius theorem
and
that $\vector{u}_{i}$ and $\vector{v}_{i}$ can be taken in
$\mathbb{Q}(\alpha_{i})^{n}$, so that
the coordinates of $\vector{u}_{1}$ and $\vector{v}_{1}$ are linearly independent over
$\mathbb{Q}$ respectively.
If we write $\vector{u}_{1}=\vector{u}_{1}(\alpha)$
as a function of $\alpha$, then $\vector{u}_{i}=\vector{u}_{1}(\alpha_{i})$,
and $\vector{v}_{i}=\vector{v}_{1}(\alpha_{i})$ in a similar fashion.
We will specify how to scale $\vector{v}_{1}$ in $\S$\ref{tiling}.
For the moment, we assume $\vector{v}_{1}(\alpha)\in \mathfrak{O}_{K}^{n}$
(i.e. every coordinate of $\vector{v}_{1}$ 
is a polynomial of $\alpha$ with integer coefficients).
No additional rescaling is needed for the unimodular case. 

\begin{exam}
For Rauzy substitution $\sigma_{R}$, 
\begin{align*}
u_{1}(1)&=1,&u_{2}(1)=\alpha^{2}-\alpha-1,\quad
&u_{3}(1)=-\alpha^{2}+2\alpha, \\
v_{1}(1)&=1,&v_{2}(1)=\alpha-1,\quad
&v_{3}(1)=\alpha^{2}-\alpha-1.
\end{align*}
\end{exam}

Let $l_{a}=\#\mathrm{Pref}(a)$ for $a\in\mathcal{A}$ and let $D=\sum_{a\in\mathcal{A}}l_{a}
=\#\mathcal{E}$.
For $1\leq i\leq r+s$, define 
the vectors $[\vector{u}_{i}]=([\vector{u}_{i}]_{J})_{J\in \mathcal{E}}
\in \mathbb{C}^{D}$ by
\[
[\vector{u}_{i}]_{J}=u_{b}(i)\quad \mathrm{if}\ J=(b:p)\in \mathcal{E}.
\]
By (\ref{eigen_eq})
\[
\sum_{k=1}^{n}\vert \sigma(k)\vert_{b}\cdot u_{k}(i)
=\alpha_{i}u_{b}(i)\quad (b\in\mathcal{A}).
\]
This is equivalent to
\[
\sum_{k=1}^{n}\sum_{J=(k:pbs)} a_{IJ} u_{k}(i)=
\sum_{J\in \mathcal{E}} a_{IJ} [\vector{u}_{i}]_{J}
=\alpha_{i}[\vector{u}_{i}]_{I}\quad (I=(b:*)\in \mathcal{E}),
\]
which means that
$A[\vector{u}_{i}]=\alpha_{i}[\vector{u}_{i}]$ for $1\leq i\leq r+s$.
The eigenvectors $\vector{u}_{i}$ of $M_{\sigma}$ are linearly independent and so are $[\vector{u}_{i}]$.
By construction the transition matrix $A$ has only $n$ linearly independent column vectors.
So $\mathrm{dim Ker}\ A=D-n$ and thus we obtain the decomposition
\begin{equation}\label{decompo}
\mathbb{C}^{D}=\mathrm{Ker}\ A\oplus W_{\alpha_{1}}\oplus\cdots\oplus W_{\alpha_{n}}
\end{equation}
where $W_{\alpha_{i}}$ is the eigenspace of $A$ belonging to $\alpha_{i}$.
This implies that $A$ is diagonalizable.

Similarly define $[\vector{v}_{i}]$
as $[\vector{u}_{i}]$ for $1\leq i\leq r+s$ so that
${}^{t}A[\vector{v}_{i}]=\alpha_{i}[\vector{v}_{i}]$.
Then orthogonal relations between $[\vector{v}_{i}]$ and $[\vector{u}_{j}]$ hold:
if 
\[
\langle \vector{\xi},\vector{\eta}\rangle=\sum_{i=1}^{D}\overline{\xi_{i}}\eta_{i}
\quad(\vector{\xi}=(\xi_{1},\ldots,\xi_{D}),\vector{\eta}=(\eta_{1},\ldots,\eta_{D})),
\]
then
$\langle[\vector{u}_{i}],[\vector{v}_{j}]\rangle=0\ (1\leq i\leq r, i\neq j)$,
\[
\langle[\vector{u}_{i}],[\vector{v}_{j}]\rangle
=
\langle\overline{[\vector{u}_{i}]},[\vector{v}_{j}]\rangle=0\ \mathrm{and}\
\langle[\vector{u}_{i}],[\vector{v}_{i}]\rangle=0
\quad (r+1\leq i\leq r+s, i\neq j).
\]
Furthermore,
\[
\langle \vector{x}_{0},[\vector{v}_{i}]\rangle=\langle \vector{x}_{0},[\overline{\vector{v}_{i}}]\rangle
=0\quad (\vector{x}_{0}\in\mathrm{Ker}\ A, 1\leq i\leq r+s).
\]
\begin{lem}\label{A}
\[
A\vector{x}=\sum_{k=1}^{r}\alpha_{k}
\frac{\langle\overline{\vector{x}},[\vector{v}_{k}]\rangle}{
\langle[\vector{u}_{k}],[\vector{v}_{k}]\rangle}[\vector{u}_{k}]+\sum_{k=r+1}^{r+s}\alpha_{k}
\frac{\langle\overline{\vector{x}},[\vector{v}_{k}]\rangle}{
\langle[\overline{\vector{u}_{k}}],[\vector{v}_{k}]\rangle}[\vector{u}_{k}]
+\overline{\alpha_{k}}
\frac{\langle\overline{\vector{x}},[\overline{\vector{v}_{k}}]\rangle}{
\langle[\vector{u}_{k}],[\overline{\vector{v}_{k}}]\rangle}[\overline{\vector{u}_{k}}].
\]
\end{lem}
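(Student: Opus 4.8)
The plan is to diagonalize $A$ explicitly by means of the decomposition (\ref{decompo}) and to recover the eigenvector coefficients of $\vector{x}$ by pairing against the left eigenvectors. The first thing I would observe is that the product $\langle\,\cdot\,,\cdot\,\rangle$ conjugates its first slot, so $\langle\overline{\vector{x}},\vector{\eta}\rangle=\sum_i x_i\eta_i$ is simply the symmetric \emph{bilinear} pairing $B(\vector{x},\vector{\eta}):={}^t\vector{x}\,\vector{\eta}$; in particular the right-hand side of the claimed identity is $\mathbb{C}$-linear (not conjugate-linear) in $\vector{x}$, as is the left-hand side. The reason $B$ is the right object is the standard biorthogonality of right and left eigenvectors: from $A[\vector{u}_i]=\alpha_i[\vector{u}_i]$, ${}^tA[\vector{v}_j]=\alpha_j[\vector{v}_j]$, and (since $A$ is real) $A[\overline{\vector{u}_i}]=\overline{\alpha_i}[\overline{\vector{u}_i}]$, ${}^tA[\overline{\vector{v}_j}]=\overline{\alpha_j}[\overline{\vector{v}_j}]$, the scalar $B([\vector{u}_i],[\vector{v}_j])$ vanishes whenever the two eigenvalues differ. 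Translated through $\langle\overline{\,\cdot\,},\cdot\,\rangle$ these are exactly the orthogonality relations recorded just before the statement; and because $\mathrm{Ker}\,A$ is defined over $\mathbb{Q}$ it is stable under conjugation, so every $\vector{x}_0\in\mathrm{Ker}\,A$ also satisfies $B(\vector{x}_0,[\vector{v}_k])=B(\vector{x}_0,[\overline{\vector{v}_k}])=0$.

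Using (\ref{decompo}) I would then write $\vector{x}$ uniquely as $\vector{x}=\vector{x}_0+\sum_{k=1}^{r}c_k[\vector{u}_k]+\sum_{k=r+1}^{r+s}\bigl(c_k[\vector{u}_k]+d_k[\overline{\vector{u}_k}]\bigr)$ with $\vector{x}_0\in\mathrm{Ker}\,A$. Applying $A$ annihilates $\vector{x}_0$ and scales each eigenvector by its eigenvalue, giving $A\vector{x}=\sum_{k=1}^{r}\alpha_k c_k[\vector{u}_k]+\sum_{k=r+1}^{r+s}\bigl(\alpha_k c_k[\vector{u}_k]+\overline{\alpha_k}d_k[\overline{\vector{u}_k}]\bigr)$, which already has the shape of the asserted formula. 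To identify $c_j$ I pair the expansion of $\vector{x}$ with $[\vector{v}_j]$ under $B$; biorthogonality kills every term but the $j$-th, so $c_j=B(\vector{x},[\vector{v}_j])/B([\vector{u}_j],[\vector{v}_j])$, which upon rewriting via $\langle\overline{\,\cdot\,},\cdot\,\rangle$ produces the numerator $\langle\overline{\vector{x}},[\vector{v}_j]\rangle$ and the denominator $\langle[\vector{u}_j],[\vector{v}_j]\rangle$ for real $j$, respectively $\langle[\overline{\vector{u}_j}],[\vector{v}_j]\rangle$ for complex $j$. Pairing instead with $[\overline{\vector{v}_j}]$ isolates $d_j=B(\vector{x},[\overline{\vector{v}_j}])/B([\overline{\vector{u}_j}],[\overline{\vector{v}_j}])$, i.e.\ numerator $\langle\overline{\vector{x}},[\overline{\vector{v}_j}]\rangle$ and denominator $\langle[\vector{u}_j],[\overline{\vector{v}_j}]\rangle$. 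Substituting these coefficients into the expression for $A\vector{x}$ reproduces the statement verbatim.

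Two points require a word of care. First, the denominators must be nonzero: if a diagonal pairing such as $B([\vector{u}_j],[\vector{v}_j])$ vanished, then $[\vector{v}_j]$ would be $B$-orthogonal to the entire basis supplied by (\ref{decompo}) — the kernel together with all $[\vector{u}_k]$ and all $[\overline{\vector{u}_k}]$ — hence zero by nondegeneracy of the standard bilinear form on $\mathbb{C}^D$, contradicting that it is an eigenvector. Second, the part I expect to be the real bookkeeping obstacle is verifying that pairing with $[\vector{v}_j]$ in the complex block extracts $c_j$ \emph{alone}: here one must simultaneously invoke $B([\vector{u}_k],[\vector{v}_j])=0$ for $k\neq j$, $B([\overline{\vector{u}_k}],[\vector{v}_j])=0$, and the self-pairing $B([\overline{\vector{u}_j}],[\vector{v}_j])=0$ (valid precisely because $\overline{\alpha_j}\neq\alpha_j$ for $r+1\le j\le r+s$) to rule out contamination from the $[\overline{\vector{u}_j}]$ term. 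These are exactly the three relations listed before the lemma, so the argument needs no input beyond what has already been established.
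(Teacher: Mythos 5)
Your proof is correct and takes essentially the same route as the paper: decompose $\vector{x}$ uniquely via (\ref{decompo}), recover the coefficients of the eigenvector components by pairing against $[\vector{v}_{k}]$ and $[\overline{\vector{v}_{k}}]$ through the orthogonality relations stated before the lemma, and then apply $A$. Your only additions are the explicit reformulation of $\langle\overline{\,\cdot\,},\cdot\,\rangle$ as the bilinear pairing and the verification that the diagonal pairings are nonzero, a point the paper's proof takes for granted.
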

\begin{proof}
By~(\ref{decompo}), any $\vector{x}$ has a unique expression $\vector{x}=\vector{x}_{0}+\vector{x}_{1}
+\cdots+\vector{x}_{n}$
where $\vector{x}_{0}\in\mathrm{Ker}\ A$ and $\vector{x}_{k}\in W_{\alpha_{k}}$ for $1\leq k\leq n$.
Put $\vector{x}_{k}=c_{k}[\vector{u}_{k}]$ ($1\leq k\leq r$) and
$\vector{x}_{k}=c_{k}[\vector{u}_{k}],
\vector{x}_{k+1}=c_{k}^{\prime}[\overline{\vector{u}_{k}}]$  ($r+1\leq k\leq r+s$)
for some scalars $c_{k}$
and $c^{\prime}_{k}$. By the orthogonal relations
\[
\langle \vector{x},[\vector{v}_{k}]\rangle=\overline{c_{k}}\langle[\vector{u}_{k}],
[\vector{v}_{k}]\rangle\quad(1\leq k\leq r),\quad
\langle \vector{x},[\vector{v}_{k}]\rangle=\overline{c_{k}^{\prime}}\langle[\overline{\vector{u}_{k}}],
[\vector{v}_{k}]\rangle\quad(r+1\leq k\leq r+s),
\]
and
\[
\langle \vector{x},[\overline{\vector{v}_{k}}]\rangle=\overline{c_{k}}
\langle[\vector{u}_{k}],
[\overline{\vector{v}_{k}}]\rangle\quad(r+1\leq k\leq r+s).
\]
The result follows immediately.
\end{proof}

\begin{thm}\label{asympt}
\[
a_{IJ}^{(k)}=
\alpha^{k}
\frac{[\vector{u}_{1}]_{I}[\vector{v}_{1}]_{J}}{
\langle[\vector{u}_{1}],[\vector{v}_{1}]\rangle}
+
\sum_{i=2}^{r}\alpha_{i}^{k}
\frac{[\vector{u}_{i}]_{I}[\vector{v}_{i}]_{J}}{
\langle[\vector{u}_{i}],[\vector{v}_{i}]\rangle}+\sum_{i=r+1}^{r+s}\alpha_{i}^{k}
\frac{[\vector{u}_{i}]_{I}[\vector{v}_{i}]_{J}}{
\langle[\overline{\vector{u}_{i}}],[\vector{v}_{i}]\rangle}
+\overline{\alpha_{i}}^{k}
\frac{\overline{[\vector{u}_{i}]_{I}}\overline{[\vector{v}_{i}]_{J}}}{
\langle[\vector{u}_{i}],[\overline{\vector{v}_{i}}]\rangle}.
\]
\end{thm}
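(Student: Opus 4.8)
The plan is to obtain the entries of $A^{k}$ as the $k$-th power of the spectral decomposition already recorded in Lemma~\ref{A}. Recall from (\ref{decompo}) that $A$ is diagonalizable, with $\mathrm{Ker}\ A$ its $0$-eigenspace and $W_{\alpha_{i}}$ its $\alpha_{i}$-eigenspace. Hence, writing any $\vector{x}\in\mathbb{C}^{D}$ as $\vector{x}=\vector{x}_{0}+\vector{x}_{1}+\cdots+\vector{x}_{n}$ with $\vector{x}_{0}\in\mathrm{Ker}\ A$ and $\vector{x}_{i}\in W_{\alpha_{i}}$, one has $A^{k}\vector{x}=\sum_{i=1}^{n}\alpha_{i}^{k}\vector{x}_{i}$ for every $k\geq 1$, since $A^{k}\vector{x}_{0}=\vector{0}$ and $A^{k}\vector{x}_{i}=\alpha_{i}^{k}\vector{x}_{i}$.

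First I would observe that the scalar coefficients $c_{i}$ and $c_{i}'$ expressing $\vector{x}_{i}=c_{i}[\vector{u}_{i}]$ (together with $c_{i}'[\overline{\vector{u}_{i}}]$ at the complex-conjugate places) are fixed by the orthogonality relations alone and do not involve $A$. Therefore the computation in the proof of Lemma~\ref{A} applies verbatim to $A^{k}$: one simply replaces each occurrence of $\alpha_{i}$ by $\alpha_{i}^{k}$ and of $\overline{\alpha_{i}}$ by $\overline{\alpha_{i}}^{k}$, which yields
\[
A^{k}\vector{x}=\sum_{i=1}^{r}\alpha_{i}^{k}\frac{\langle\overline{\vector{x}},[\vector{v}_{i}]\rangle}{\langle[\vector{u}_{i}],[\vector{v}_{i}]\rangle}[\vector{u}_{i}]+\sum_{i=r+1}^{r+s}\Big(\alpha_{i}^{k}\frac{\langle\overline{\vector{x}},[\vector{v}_{i}]\rangle}{\langle[\overline{\vector{u}_{i}}],[\vector{v}_{i}]\rangle}[\vector{u}_{i}]+\overline{\alpha_{i}}^{k}\frac{\langle\overline{\vector{x}},[\overline{\vector{v}_{i}}]\rangle}{\langle[\vector{u}_{i}],[\overline{\vector{v}_{i}}]\rangle}[\overline{\vector{u}_{i}}]\Big).
\]

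Then I would extract the matrix entry. Since $A^{k}=(a_{IJ}^{(k)})_{I,J\in\mathcal{E}}$, we have $a_{IJ}^{(k)}=(A^{k}\vector{e}_{J})_{I}$, where $\vector{e}_{J}$ is the standard basis vector at $J\in\mathcal{E}$. Specializing the displayed formula to $\vector{x}=\vector{e}_{J}$ and reading off the $I$-th coordinate, the numerators simplify because $\vector{e}_{J}$ is real: $\langle\overline{\vector{e}_{J}},[\vector{v}_{i}]\rangle=[\vector{v}_{i}]_{J}$ and $\langle\overline{\vector{e}_{J}},[\overline{\vector{v}_{i}}]\rangle=\overline{[\vector{v}_{i}]_{J}}$, while $([\vector{u}_{i}])_{I}=[\vector{u}_{i}]_{I}$ and $([\overline{\vector{u}_{i}}])_{I}=\overline{[\vector{u}_{i}]_{I}}$. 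Substituting these identities, and collecting the $i=1$ term (the Perron--Frobenius eigenvalue $\alpha_{1}=\alpha$) separately, produces exactly the asserted formula.

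The computation is essentially routine linear algebra, and I do not expect a genuine obstacle; the only point demanding care is the bookkeeping forced by the sesquilinear convention $\langle\vector{\xi},\vector{\eta}\rangle=\sum_{i}\overline{\xi_{i}}\eta_{i}$ together with the pairing of eigenvectors $[\vector{u}_{i}],[\overline{\vector{u}_{i}}]$ at the $s$ pairs of complex-conjugate eigenvalues. One must check that the conjugations on $\alpha_{i}$, on $[\vector{u}_{i}]_{I}$, and on $[\vector{v}_{i}]_{J}$ are distributed consistently, so that the two terms indexed by a complex place are honest complex conjugates of each other; this in particular re-confirms that $a_{IJ}^{(k)}$ is real, as it must be, being a nonnegative integer counting admissible paths. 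What makes the argument work is precisely that the decomposition of $\vector{x}$ invoked above is the one dictated by Lemma~\ref{A}, so that raising $A$ to the $k$-th power amounts to nothing more than raising each eigenvalue to the $k$-th power.
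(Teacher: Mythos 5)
Your proposal is correct and is essentially the paper's own proof, which consists of the single line ``since $a_{IJ}^{(k)}={}^{t}\vector{e}_{I}A^{k}\vector{e}_{J}$, use Lemma~\ref{A}'': you iterate the spectral decomposition of Lemma~\ref{A} (justified by the diagonalizability in (\ref{decompo})), specialize to $\vector{x}=\vector{e}_{J}$, and read off the $I$-th coordinate. Your filled-in details --- the coefficients being independent of $k$, the simplifications $\langle\overline{\vector{e}_{J}},[\vector{v}_{i}]\rangle=[\vector{v}_{i}]_{J}$ and $\langle\overline{\vector{e}_{J}},[\overline{\vector{v}_{i}}]\rangle=\overline{[\vector{v}_{i}]_{J}}$, and the conjugate-pair bookkeeping at the complex places --- are exactly the routine steps the paper leaves implicit.
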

\begin{proof}
Since $a_{IJ}^{(k)}={}^{t}\vector{e}_{I}A^{k}\vector{e}_{J}$, use Lemma~\ref{A}.
\end{proof}

\section{Parry Measure And Weak Mixing  Subshift}\label{pmeasure}

\begin{defn}
Given an admissible path $c_{0},c_{1},\ldots,c_{k}\in \mathcal{E}$,
define a \textit{cylinder} $\langle c_{0}c_{1}\cdots c_{k}\rangle$ of $\Sigma_{A}$ 
by
\[
\langle c_{0}c_{1}\cdots c_{k}\rangle=
\{ ((b_{k+1}:p_{k}))_{k\geq 0}\in\Sigma_{A}: (b_{i+1}:p_{i})=c_{i}\ \mathrm{for}\
0\leq i\leq k \}.
\]
\end{defn}

The \textit{Parry measure} $m$ on $\Sigma_{A}$ is a $T$-invariant
$(\vector{p},P)$-Markov measure as follows (\cite{parry1964intrinsic}).
Define the probability row vector $\vector{p}=(p_{I})_{I\in \mathcal{E}}$
and the stochastic matrix $P=(p_{IJ})_{I,J\in \mathcal{E}}$ by
\[
p_{I}=\frac{[\vector{u}_{1}]_{I}[\vector{v}_{1}]_{I}}{
\langle[\vector{u}_{1}],[\vector{v}_{1}]\rangle}
,\quad 
p_{IJ}=a_{IJ}\frac{[\vector{u}_{1}]_{J}}{\alpha[\vector{u}_{1}]_{I}}.
\]
Then one can make a finitely additive measure on the algebra $\mathfrak{A}$
of finite unions of disjoint cylinders which is 
defined by $m(\langle c_{0}\rangle)=p_{c_{0}}$ and
\[
m(\langle c_{0}c_{1}\cdots c_{k} \rangle)=p_{c_{0}}p_{c_{0}c_{1}}\cdots p_{c_{k-1}c_{k}}>0
\quad\mathrm{for\ any\ cylinder}\ \langle c_{0}c_{1}\cdots c_{k}\rangle,
\]
and extend it to a probability measure $m$ on the $\sigma$-algebra generated by $\mathfrak{A}$.
By $\vector{p}P=\vector{p}$ follows $T$-invariance.

As $A$ is irreducible and aperiodic by Lemma~\ref{irraper}, 
so is $P$. Then it is known that {\it the measure-preserving system $(\Sigma_{A},T,m)$ is 
 weak-mixing} (in fact, strong-mixing. See
Theorem~1.31 of \cite{walters2000introduction}).
Equivalently {\it the product system $(T\times T,\Sigma_{A}\times\Sigma_{A},m\times m)$ is ergodic}
where $T\times T(\omega_{1},\omega_{2})=(T\omega_{1},T\omega_{2})$ (see Theoem~1.24 of \cite{walters2000introduction}, and
\cite{furstenberg2014recurrence} for example).
So, if $\mathcal{C}$ is a non-empty cylinder set ($m(\mathcal{C})>0$), then
almost every point $(\omega_{1},\omega_{2})$ in $\Sigma_{A}\times\Sigma_{A}$
visits $\mathcal{C}\times\mathcal{C}$ infinitely often by $T\times T$ (
Birkhoff's ergodic theorem).

\section{Rauzy Fractals From The Adelic Viewpoint}

Following the exposition of \cite{minervino2014geometry} and \cite{sing2006pisot}
, a geometric representation of
$\Sigma_{A}$ will be introduced.
Since this description is closely related to the ring of 
integers $\mathfrak{O}_{K}$ in the number field
$K=\mathbb{Q}(\alpha)$ (cf. \cite{thurston1989groups}),
we first
recall  facts from algebraic number theory.

The number field $K$ has $r$ real embeddings (field homomorphisms) and $2s$ complex ones.
A \textit{prime} of $K$ is an equivalent class of valuations.
To each prime ideal, each real embedding and
each conjugate pair of complex embeddings, there corresponds exactly one prime $v$ of $K$ and
vice versa (Ostrowski's theorem).
If a prime $v$ corresponds to a prime ideal, then we call it a {\it finite prime}.
For an {\it infinite prime} $v$, it means that $v$ comes from an equivalent class of real or complex
embedding of $K$. Let $\mathfrak{M}_{\infty}$ be the set of infinite primes of $K$.
If $v\in\mathfrak{M}_{\infty}$ and if $\tau_{v}:K\to\mathbb{R}$ is a real embedding
corresponding to $v$, 
set $K_{v}=\mathbb{R}$ and define 
an absolute value (multiplicative valuation) $\vert\cdot\vert_{v}: K\to\mathbb{R}$
by $\vert \xi\vert_{v}=\vert \tau_{v}(\xi)\vert$.
If $\tau_{v}:K\to\mathbb{C}$ is a complex embedding corresponding to $v$,
put $K_{v}=\mathbb{C}$ and define $\vert \xi\vert_{v}=\vert \tau_{v}(\xi)\vert^{2}$.
If $v$ is a finite prime corresponding to a prime ideal $\mathfrak{p}$,
denote the absolute norm of $\mathfrak{p}$
by $\mathfrak{N}(\mathfrak{p})$ and the corresponding {\it $\mathfrak{p}$-adic
valuation} by $v_{\mathfrak{p}}$. 
Then the 
{\it normalized absolute value} (multiplicative valuation) is defined by
\[
\vert \xi\vert_{v}=\Big(\frac{1}{\mathfrak{N}(\mathfrak{p})}\Big)^{v_{\mathfrak{p}}
(\xi)}\quad(\xi\in K,\ \vert 0\vert_{v}=0),
\]
with respect to which the completion of $K$ is denoted by $K_{v}$.

Let $(\alpha)$ be the principal ideal generated by $\alpha$.
By the unique factorization of ideals, it follows that
\begin{equation}\label{ideal}
(\alpha)=\mathfrak{p}_{1}^{\nu_{1}}\mathfrak{p}_{2}^{\nu_{2}}\dots\mathfrak{p}_{k}^{\nu_{\kappa}}
\quad (\nu_{i}\geq 1,\nu_{i}\in\mathbb{N})
\end{equation}
with the $\mathfrak{p}_{i}$ distinct prime ideals for $1\leq i\leq \kappa$.
If $N_{K/{\mathbb{Q}}}(\xi)$ is the field norm of $K$, then
$\mathfrak{N}((\alpha))=\vert N_{K/{\mathbb{Q}}}(\alpha)\vert$.
Since $\mathfrak{N}((\alpha))=\prod_{i=1}^{k}
\mathfrak{N}(\mathfrak{p}_{i})^{\nu_{i}}$ by (\ref{ideal}),
and since $\vert N_{K/{\mathbb{Q}}}
(\alpha)\vert=\vert{\rm det}M_{\sigma}\vert$,
we obtain 
\begin{equation}\label{product_alpha}
\vert\alpha_{2}\cdots\alpha_{d}\vert\frac{1}{q_{1}^{\nu_{1}}}
\cdots\frac{1}{q_{k}^{\nu_{\kappa}}}=\frac{1}{\alpha}
\end{equation}
where $q_{i}=\mathfrak{N}(\mathfrak{p}_{i})$ and each $q_{i}$ is a power of some prime number.
Applying the \textit{product formula}
\[
\prod_{v}\vert \xi\vert_{v}=1\qquad (\xi\in K^{\times})
\]
to $\alpha$ also gives the same result, where the product is taken over all the primes $v$ of $K$.

Let  $\mathfrak{M}^{\prime}$ be the union of $\mathfrak{M}_{\infty}$
and the subset of finite primes which correspond to $\mathfrak{p}_{i}$
for $1\leq i\leq \kappa$.
Set $\mathfrak{M}=\mathfrak{M}^{\prime}\backslash\{ v_{1}\}$ where  $v_{1}$ is
such a valuation as $\vert\alpha\vert_{v_{1}}=\alpha$ (expanding direction).
Define the ad\'{e}le subrings by 
\[
K_{\infty}=\prod_{v\in\mathfrak{M}_{\infty}}K_{v}=
\mathbb{R}^{r}\times\mathbb{C}^{s}
,\quad  K_{\alpha}=\prod_{v\in\mathfrak{M}^{\prime}}K_{v}\quad
\mathrm{and}\quad
K_{\sigma}=\prod_{v\in\mathfrak{M}}K_{v}.
\]
Observe that
$M_{\sigma}$ is unimodular if and only if $(\alpha)=\mathfrak{O}_{K}$.
So, in the unimodular case, there is no prime ideal $\mathfrak{p}$ which divides
$(\alpha)$, i.e.$(\alpha)\subseteq\mathfrak{p}$,
hence $\mathfrak{M}_{\infty}=\mathfrak{M}^{\prime}$.

A metric $d_{K}$ on $K_{\sigma}$ can be introduced, for instance, by
\[
d_{K}(X,Y)=\max\Big\{ \vert X_{v}-Y_{v}\vert,\vert X_{v^{\prime}}-Y_{v^{\prime}}\vert_{v^{\prime}}
: v\in\mathfrak{M}\cap \mathfrak{M}_{\infty},v^{\prime}\in \mathfrak{M}\backslash\mathfrak{M}_{\infty}
\Big\}
\]
for $X=(X_{v})_{v\in\mathfrak{M}}$ and  $Y=(Y_{v})_{v\in\mathfrak{M}}$.
For a metric space, we denote a ball of radius $R$ at $X$ by $B(X,R)$.

Let $\tau_{v}:K\to\mathbb{C}$ be the embedding corresponding to $v\in
\mathfrak{M}_{\infty}$. For $X\in K_{\alpha}$ write $X=(X_{v})_{v\in\mathfrak{M}^{\prime}}$.
Define $\Phi:K\to K_{\alpha}$ (`diagonal embedding') by
\[
\Phi(\xi)_{v}=
\begin{cases}
\tau_{v}(\xi) &\mathrm{if}\ v\in\mathfrak{M}_{\infty}, \\
\xi  &\mathrm{if}\ v\in\mathfrak{M}^{\prime}\backslash\mathfrak{M}_{\infty}.
\end{cases}
\]
By $\pi_{2}$ denote the projection
from $K_{\alpha}$ to $K_{\sigma}$: $\pi_{2}((X_{v})_{v\in\mathfrak{M}^{\prime}})
=(X_{v})_{v\in\mathfrak{M}}$.
Similarly, by $\pi_{1}$ denote the projection from
$K_{\alpha}$ to $\mathbb{R}$:
$\pi_{1}((X_{v})_{v\in\mathfrak{M}^{\prime}})=(X_{v})_{v\in\mathfrak{M}^{\prime}
\backslash \mathfrak{M}}$.

The action of $K$ on $K_{\sigma}$ by multiplication
is interpreted as
\[
\beta(X_{v})_{v\in\mathfrak{M}}=
(\Phi(\beta)_{v}X_{v})_{v\in\mathfrak{M}}\quad\mathrm{for}\ 
\beta\in K\ \mathrm{and}\ (X_{v})_{v\in\mathfrak{M}}\in K_{\sigma}.
\]
In particular, $\beta(\pi_{2}\circ\Phi(\xi))=
\pi_{2}\circ\Phi(\beta \xi)$. 

Let $\mu$ be the Haar measure on $K_{\sigma}$ (translation-invariant measure).
For any measurable set $B$ in $K_{\sigma}$
\begin{equation}\label{haar}
\mu(\alpha B)=\prod_{v\in\mathfrak{M}}
\vert\alpha\vert_{v}\mu(B)=
\frac{1}{\alpha}\mu(B).
\end{equation}

By abuse of notation, $\langle\cdot,\cdot\rangle$
is also used as the inner product in  $\mathbb{C}^{n}$.
Define
$\Psi:\Sigma_{A}\rightarrow K_{\sigma}
$ by
\[
\Psi((a_{k+1}:p_{k})_{k\geq 0})
=\Big(\sum_{i\geq 0}\langle f(p_{i}),\vector{v}(\alpha_{v})\rangle \alpha^{i}_{v}
\Big)_{v\in\mathfrak{M}}
\]
where $\alpha_{v}=\Phi(\alpha)_{v}$ for $v\in\mathfrak{M}$.
Notice that $\sum_{i\geq 0}\langle f(p_{i}),\vector{v}(\alpha_{v})\rangle \alpha^{i}_{v}$
is well-defined
in every $K_{v}$. Indeed, it is obvious when $v\in\mathfrak{M}_{\infty}\backslash\{
v_{1}\}$.
If $v\in\mathfrak{M}\backslash\mathfrak{M}_{\infty}$, then
$v_{\frak{p}_{i}}(\alpha)=\nu_{i}$ and $v_{\frak{p}_{i}}(\alpha^{k}
)=k\nu_{i}$ for $1\leq i\leq \kappa$.
Thus
$\eta=\sum_{i=0}^{\infty}\langle f(p_{i}),
\vector{v}(\alpha)\rangle\alpha^{i}$ is the limit of a Cauchy sequence
\[
\Big\{\sum_{i=0}^{k}\langle f(p_{i}),
\vector{v}(\alpha)\rangle\alpha^{i}\Big\}_{k\in\mathbb{N}}\subset \mathfrak{O}_{K}
\]
with respect to the metric of $K_{v}$ and thereby $\vert\eta\vert_{v}\leq 1$.

\begin{defn}\label{rauz}
Given $a\in\mathcal{A}$ let
\[
\Sigma_{A}(a)=\Big\{ \omega=(\omega_{k})_{k\geq 0}\in\Sigma_{A}
: \omega_{0}=(b_{1}:p_{0}a_{0}s_{0})\ \mathrm{with}\ a_{0}=a
\Big\}.
\]
The image of $\Sigma_{A}$ by $\Psi$
\[
\mathcal{R}_{\sigma}=\Psi(\Sigma_{A})
\]
is called the \textit{Rauzy fractal} for $\sigma$
(or \textit{Dumont-Thomas central tile}),
and $\mathcal{R}_{\sigma}(a)=\Psi(\Sigma_{A}(a))$
is referred to as its \textit{subtiles} (or \textit{
Dumont-Thomas subtiles}).
\end{defn}

\begin{rem}\label{identity}
This definition of Rauzy fractal and its subtiles might not seem standard, but
it actually aligns with the conventional definition as seen below. 
Define $T^{-1}_{\rm ext}:K_{\alpha}\times\mathcal{A}\rightarrow 2^{K_{\alpha}\times\mathcal{A}}$ by
\[
T^{-1}_{\rm ext}(Y^{*},a)=\bigcup_{(b:p,a,s)\in \mathcal{E}}\{(\alpha^{-1}(Y^{*}+
\Phi(\langle f(p),\vector{v}\rangle
)),b)\}.
\]
Iterating $T^{-1}_{\rm ext}$, we obtain
\begin{multline}\label{iterate}
T^{-k}_{\rm ext}(\vector{0},a)=
\bigcup_{\sigma^{k}(b)=pas}\{ (\alpha^{-k}
\Phi(\langle f(p),\vector{v}\rangle
),b)\}\\
=\bigcup_{
b_{k}\xrightarrow[]{p_{k-1}}\cdots
\xrightarrow[]{p_{1}}
b_{1}\xrightarrow[]{p_{0}}a
}
\{ (\alpha^{-k}
\Phi(\langle f(\sigma^{k-1}(p_{k-1})\sigma^{k-2}(p_{k-2})\dots p_{0}),\vector{v}\rangle
),b_{k})\}
\end{multline}
where the sum is taken over all the admissible paths of length $k$ which
end at $a$.
The Hausdorff metric $d_{\mathrm{H}}$ for closed sets $A,B\in K_{\sigma}$ is defined by
\[
d_{\mathrm{H}}(A,B)=\sup\{ \rho(A,B),\rho(B,A)\}
\]
where $\rho(A,B)=\sup_{x\in A}d(x,B)$ and $d(x,B)=\inf_{y\in B}d_{K}(x,y)$.
In (4.12) or (8.1) of \cite{minervino2014geometry}, the Dumont-Thomas subtile
is described by the limit with respect to $d_{\mathrm{H}}$
\[
\mathcal{R}_{a}:=\underset{k\to\infty}{\mathrm{lim}_{\mathrm{H}}}\
\alpha^{k}\pi\circ T^{-k}_{\mathrm{ext}}(\vector{0},a)
\]
where $\pi$ modules out the 2nd coordinate and makes a composition of $\pi_{2}$ and the 1st coordinate
into $K_{\sigma}$.
For our purpose, it suffices to show $d_{\mathrm{H}}(\mathcal{R}_{\sigma}(a),\mathcal{R}_{a})=0$.
By the triangle inequality, we obtain
\begin{equation}\label{1sttriangle}
d_{\mathrm{H}}(\mathcal{R}_{\sigma}(a),\mathcal{R}_{a})\leq d_{\mathrm{H}}(\mathcal{R}_{\sigma}(a),
\alpha^{k}\pi\circ T^{-k}_{\mathrm{ext}}(\vector{0},a))
+d_{\mathrm{H}}(\alpha^{k}\pi\circ T^{-k}_{\mathrm{ext}}(\vector{0},a),\mathcal{R}_{a}).
\end{equation}
Given $\epsilon>0$, there exists $k_{0}$ such that for all $k\geq k_{0}$
\begin{equation}\label{2ndtriangle}
d_{\mathrm{H}}(\alpha^{k}\pi\circ T^{-k}_{\mathrm{ext}}(\vector{0},a),\mathcal{R}_{a})
<\frac{\epsilon}{2}.
\end{equation}
By (\ref{iterate}), the set $\alpha^{k}\pi\circ T^{-k}_{\mathrm{ext}}(\vector{0},a)$ is determined
by admissible paths 
\[
b_{k}\xrightarrow[]{p_{k-1}}b_{k-1}\xrightarrow[]{p_{k-2}}\cdots
\xrightarrow[]{p_{1}}
b_{1}\xrightarrow[]{p_{0}}a,\quad p=f(\sigma^{k-1}(p_{k-1}))\cdots f(p_{1})p_{0}.
\] 
Since
\[
\rho(\mathcal{R}_{\sigma}(a),\alpha^{k}\pi\circ T^{-k}_{\mathrm{ext}}(\vector{0},a)
)=\sup_{X\in\mathcal{R}_{\sigma}(a)}
d(X,\alpha^{k}\pi\circ T^{-k}_{\mathrm{ext}}(\vector{0},a))
\]
is bounded by a constant multiplied by $\beta^{k}$ for some $0<\beta<1$,
it follows by (\ref{1sttriangle}) and (\ref{2ndtriangle})
that 
$d_{H}(\mathcal{R}_{\sigma}(a),\mathcal{R}(a))<\epsilon$
for sufficiently large $k$. Hence $\mathcal{R}_{\sigma}(a)=\mathcal{R}(a)$.
{\it This allows for the utilization of existing knowledge about Rauzy fractals.}
\end{rem}

\begin{lem}\label{continuous}
$\Psi:\Sigma_{A}\to K_{\sigma}$ is continuous and so is its translation
$\Psi+\gamma$ for $\gamma\in K_{\sigma}$.
\end{lem}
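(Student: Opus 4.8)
The plan is to prove that $\Psi$ is in fact \emph{uniformly} continuous, by showing that proximity of two sequences in $\Sigma_{A}$ forces their images to differ only by a geometrically small tail at every place of $\mathfrak{M}$ at once. First I would record the elementary link between the two metrics: if $d_{\Sigma_{A}}(\omega,\omega')<2^{-N}$, then $\omega_{k}=\omega'_{k}$ for all $0\le k\le N$, since a disagreement at some coordinate $j\le N$ would already force $d_{\Sigma_{A}}\ge 2^{-j}\ge 2^{-N}$. Writing $\omega_{k}=(b_{k+1}:p_{k}a_{k}s_{k})$ and similarly for $\omega'$, this means $p_{i}=p'_{i}$ for $0\le i\le N$, so in the defining series for $\Psi(\omega)_{v}$ and $\Psi(\omega')_{v}$ the first $N+1$ terms cancel and only the tail $\sum_{i>N}$ survives at each place $v\in\mathfrak{M}$.

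The heart of the estimate is a uniform bound on the coefficients $\langle f(p_{i}),\vector{v}(\alpha_{v})\rangle$. Since $\mathrm{Pref}$ is a finite set, $f(p_{i})$ ranges over the finite set $f(\mathrm{Pref})\subset\mathbb{Z}^{n}$; hence at each archimedean place $v\in\mathfrak{M}\cap\mathfrak{M}_{\infty}$ there is a constant $C_{v}$ with $\vert\langle f(p_{i}),\vector{v}(\alpha_{v})\rangle\vert\le C_{v}$, while at a finite place $v\in\mathfrak{M}\backslash\mathfrak{M}_{\infty}$ the coefficient lies in $\mathfrak{O}_{K}$ (because $\vector{v}(\alpha)\in\mathfrak{O}_{K}^{n}$), so $\vert\langle f(p_{i}),\vector{v}(\alpha_{v})\rangle\vert_{v}\le 1$. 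At the archimedean places $\vert\alpha_{v}\vert<1$, because $v\neq v_{1}$ and $\alpha$ is Pisot, so the surviving tail is bounded by the geometric sum $2C_{v}\vert\alpha_{v}\vert^{N+1}/(1-\vert\alpha_{v}\vert)$. At the finite places $\vert\alpha\vert_{v}<1$ (indeed $v_{\mathfrak{p}_{i}}(\alpha)=\nu_{i}\ge 1$), and the ultrametric inequality gives a tail bound $\max_{i>N}\vert\alpha\vert_{v}^{i}=\vert\alpha\vert_{v}^{N+1}$.

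Combining these, since $\mathfrak{M}$ is finite I can, given $\epsilon>0$, choose $N$ so large that every one of these tail bounds falls below $\epsilon$; then $d_{\Sigma_{A}}(\omega,\omega')<2^{-N}$ forces $d_{K}(\Psi(\omega),\Psi(\omega'))<\epsilon$, which is uniform continuity (and in particular continuity). The translated map $\Psi+\gamma$ is then continuous because translation by a fixed $\gamma$ is an isometry of $K_{\sigma}$, the metric $d_{K}$ depending only on the differences $X_{v}-Y_{v}$; thus $\Psi+\gamma$ is a composition of continuous maps. I do not anticipate a serious obstacle; the only point needing care is to treat the non-archimedean places on the same footing as the archimedean ones—invoking the ultrametric maximum in place of a geometric sum, and the integrality $\vector{v}(\alpha)\in\mathfrak{O}_{K}^{n}$ to control the coefficients—so that a single choice of $N$ serves uniformly across all of $\mathfrak{M}$.
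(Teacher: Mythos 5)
Your proposal is correct and follows essentially the same route as the paper: agreement of the first $N+1$ coordinates kills the initial terms, the archimedean tails are controlled by a geometric series since $\vert\alpha_{v}\vert<1$ for $v\neq v_{1}$, and the non-archimedean tails by the ultrametric inequality together with $\vector{v}(\alpha)\in\mathfrak{O}_{K}^{n}$ (the paper phrases this as factoring the tail as $\alpha^{k}\eta$ with $\vert\eta\vert_{v}\leq 1$, which is the same estimate). Your observation that the modulus of continuity is independent of $\omega$, giving uniform continuity, is a harmless slight strengthening already implicit in the paper's choice of constants.
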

\begin{proof}
Fix $\omega\in\Sigma_{A}$. 
Set 
\[
C_{i}:=\max\{\vert\langle f(p)-f(q),\vector{v}_{i}\rangle\vert:
p,q\in\mathrm{Pref}\}
\]
for $2\leq i\leq r+s$. Given $\epsilon>0$, take the minimum integer $k$ so that
\[
\max\Big\{
C_{i}\frac{\vert\alpha_{i}\vert^{k}}{1-\vert\alpha_{i}\vert},
\Big(\frac{1}{q_{j}}\Big)^{k\nu_{j}}:
2\leq i\leq r+s,1\leq j\leq \kappa
\Big\}
<\epsilon.
\]
If $d_{\Sigma_{A}}(\omega,\omega^{\prime})<2^{-(k-1)}$, then
the first $k$ coordinates of $\omega$ and $\omega^{\prime}$ coincide.
For $v\in\mathfrak{M}_{\infty}$,
it is easy to see $\vert\Psi(\omega)_{v}-\Psi(\omega^{\prime})_{v}\vert<\epsilon$.
For $v\in \mathfrak{M}\backslash\mathfrak{M}_{\infty}$, one can find $\eta\in K_{v}$
so that $\Psi(\omega)_{v}-\Psi(\omega^{\prime})_{v}=\alpha^{n}\eta$ with $\vert\eta\vert_{v}\leq 1$.
This implies $\vert\Psi(\omega)_{v}-\Psi(\omega^{\prime})_{v}\vert_{v}<\epsilon$.
Consequently we have
\[
\Psi\Big(B\Big(\omega,\frac{1}{2^{k-1}}\Big)\Big)\subset B(\Psi(\omega),\epsilon).
\]
\end{proof}

As any closed ball at $0$ in 
every locally compact space $K_{v}$ is compact,
so is $\overline{B(0,R)}$ in $K_{\sigma}$ for $R>0$. 
Hence, in view of Lemma~\ref{continuous}, the Rauzy fractal $\mathcal{R}_{\sigma}$
is compact.

There are a sequence of partitions of $\Sigma_{A}(a)$ induced by cylinders:
for each $m\geq 1$
\begin{equation}\label{parti}
\Sigma_{A}(a)=\bigcup\langle c_{0}c_{1}\cdots c_{m-1}\rangle
\end{equation}
where the union is taken over all the different admissible paths of length $m$ 
starting with $c_{0}=
(b_{1}:p_{0}as_{0})$ for all possible $b_{1}\in\mathcal{A}$ and $p_{0}\in\mathrm{Pref}(b_{1})$.

If $c_{i}=(b_{i+1}:p_{i}a_{i}s_{i})$ for $0\leq i\leq m-1$ and $a_{0}=a$, then
\[
\sigma^{m}(b_{m})=pas, \quad p=\sigma^{m-1}(p_{m-1})\sigma^{m-2}(p_{m-2})\cdots p_{0}
\]
for some suffix $s$, and
\begin{equation}\label{cylinder_image}
\Psi(\langle c_{0}c_{1}\cdots c_{m-1}\rangle)=\alpha^{m}\mathcal{R}_{\sigma}(b_{m})
+\Phi^{\prime}(\langle f(p),\vector{v}\rangle),\quad
\Phi^{\prime}=\pi_{2}\circ\Phi.
\end{equation}
Each partition (\ref{parti}) and (\ref{iterate}), thus, induce the set equation 
\begin{equation}\label{seteqI}
\mathcal{R}_{\sigma}(a)=
\bigcup_{(\gamma^{*},b)\in T^{-m}_{\rm ext}({\bf 0},a)}\alpha^{m}(\mathcal{R}_{\sigma}(b)+
\gamma^{*})
=
\bigcup_{b\in\mathcal{A},\sigma^{m}(b)=pas}
\alpha^{m}\mathcal{R}_{\sigma}(b)+\Phi^{\prime}(\langle f(p),\vector{v}\rangle)
\end{equation}
for each $m\geq 1$, where the union members are disjoint in measure by Perron-Frobenius theorem
(Theorem~8.3 of \cite{minervino2014geometry}).
Moreover, it is well-known for $\mathcal{R}_{\sigma}(a)$ to have the following properties
\begin{equation}\label{ca}
\mu(\partial\mathcal{R}_{\sigma}(a))=0\ \mathrm{and}\
\mathcal{R}_{\sigma}(a)=\overline{\mathrm{int}\mathcal{R}_{\sigma}(a)}\
\mathrm{for\ all}\ a\in\mathcal{A}.
\end{equation}

\begin{lem}\label{latticeII}
Let $\vector{w}_{1}$ and $\vector{w}_{2}$ be vectors in $\mathbb{Q}^{n}$. Then
$\vector{w}_{1}=\vector{w}_{2}$ if and only if
\[
\Phi^{\prime}(\langle \vector{w}_{1},\vector{v}\rangle)=
\Phi^{\prime}(\langle \vector{w}_{2},\vector{v}\rangle).
\]
\end{lem}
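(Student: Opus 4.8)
The plan is to reduce the statement to the injectivity of a single field embedding. The forward implication is immediate, since $\vector{w}_{1}=\vector{w}_{2}$ trivially forces the two images under the (well-defined) map $\vector{w}\mapsto\Phi^{\prime}(\langle\vector{w},\vector{v}\rangle)$ to agree. For the converse, set $\vector{w}=\vector{w}_{1}-\vector{w}_{2}\in\mathbb{Q}^{n}$ and put $\xi=\langle\vector{w},\vector{v}\rangle$. Because the coordinates of $\vector{v}=\vector{v}_{1}$ lie in $\mathfrak{O}_{K}$ and $\vector{w}$ is rational, we have $\xi\in K$. Since $\Phi$ is a ring homomorphism and $\pi_{2}$ a coordinate projection, the composite $\Phi^{\prime}=\pi_{2}\circ\Phi$ is additive and $\mathbb{Q}$-linear; hence the hypothesis $\Phi^{\prime}(\langle\vector{w}_{1},\vector{v}\rangle)=\Phi^{\prime}(\langle\vector{w}_{2},\vector{v}\rangle)$ becomes simply $\Phi^{\prime}(\xi)=0$. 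It then suffices to establish the two implications $\Phi^{\prime}(\xi)=0\Rightarrow\xi=0$ and $\xi=0\Rightarrow\vector{w}=0$.

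For the first implication I would exploit that every component of $\Phi^{\prime}$ is an injective embedding. Recall $\Phi^{\prime}(\xi)=(\Phi(\xi)_{v})_{v\in\mathfrak{M}}$ with $\mathfrak{M}=\mathfrak{M}^{\prime}\backslash\{v_{1}\}$, the expanding place $v_{1}$ being discarded. The set $\mathfrak{M}$ is nonempty: since $\alpha$ is a real Pisot number, $v_{1}$ is a real infinite prime, so $r\geq 1$, and from $r+2s=n\geq 2$ one checks that $r+s\geq 2$; thus $\mathfrak{M}$ contains at least one infinite prime distinct from $v_{1}$. Choose any $v\in\mathfrak{M}$. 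If $v$ is infinite, the $v$-component of $\Phi^{\prime}$ is the real or complex embedding $\tau_{v}:K\to\mathbb{C}$; if $v$ is finite, it is the inclusion of $K$ into its $\mathfrak{p}$-adic completion $K_{v}$. Either map is an injective field homomorphism, so $\Phi(\xi)_{v}=0$ already forces $\xi=0$ in $K$.

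Finally, $\xi=\langle\vector{w},\vector{v}\rangle=\sum_{k=1}^{n}w_{k}v_{k}(1)=0$ with every $w_{k}\in\mathbb{Q}$; since the coordinates $v_{1}(1),\dots,v_{n}(1)$ of $\vector{v}_{1}$ are linearly independent over $\mathbb{Q}$ (as recorded in Section~4), each $w_{k}$ vanishes, i.e.\ $\vector{w}_{1}=\vector{w}_{2}$. Conceptually, these $n$ independent elements form a $\mathbb{Q}$-basis of the degree-$n$ field $K$, so the lemma is exactly the injectivity of $\Phi^{\prime}$ on $K$. I expect no serious obstacle in the argument; the only points that require care are confirming that discarding the expanding place $v_{1}$ still leaves $\mathfrak{M}$ nonempty (so that at least one injective embedding survives), and keeping the finite-place case in mind, where injectivity of the $v$-component stems from the inclusion $K\hookrightarrow K_{v}$ rather than from an archimedean embedding.
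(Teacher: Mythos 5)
Your proof is correct, and it diverges from the paper's own argument in the concluding step. Both proofs turn on the injectivity of field homomorphisms, but the paper first upgrades the hypothesis $\Phi^{\prime}(\langle\vector{w},\vector{v}\rangle)=0$ to the vanishing of $\langle\vector{w},\vector{v}(\alpha_{v})\rangle$ at \emph{all} places of $\mathfrak{M}^{\prime}$, including the discarded expanding place $v_{1}$, recovered through the injective homomorphisms $\mathbb{Q}(\alpha)\to\mathbb{Q}(\alpha_{i})$; it then expands $\vector{w}$ in the eigenbasis of $M_{\sigma}$, where the coefficients are precisely the inner products $\langle\overline{\vector{w}},\vector{v}_{i}\rangle$ that were just shown to vanish, giving $\vector{w}=0$. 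You instead use injectivity at a \emph{single} surviving place $v\in\mathfrak{M}$ to conclude $\xi=\langle\vector{w},\vector{v}\rangle=0$ already in $K$, and then finish with the $\mathbb{Q}$-linear independence of the coordinates of $\vector{v}_{1}$, a fact the paper records in Section~4 and later treats as interchangeable with this lemma (``by Lemma~\ref{latticeII} or $\mathbb{Q}$-linear independence of $\vector{v}$''). Your route is the more economical one: it avoids the spectral decomposition entirely, and your verification that $\mathfrak{M}\neq\emptyset$ (from $r\geq 1$ since $\alpha$ is real, and $r+s\geq 2$ whenever $r+2s=n\geq 2$) is exactly the point the paper handles more tersely with its aside that the argument survives the unimodular case, where no finite places are available. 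The trade-off is that your argument leans on the recorded independence of the coordinates of $\vector{v}_{1}$ as a black box, whereas the paper's eigenbasis computation is self-contained given Section~4's setup --- the two are at bottom the same fact, since that independence is itself usually proved via the conjugate eigenvectors $\vector{v}_{i}$ spanning $\mathbb{C}^{n}$. Both proofs are valid; yours is cleaner to read, the paper's carries its own justification.
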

\begin{proof}
For $\vector{w}\in\mathbb{Q}^{n}$, suppose that
$\Phi^{\prime}(\langle\vector{w},\vector{v}\rangle)=0$.
It is equivalent to $\langle \vector{w},\vector{v}(\alpha_{v})\rangle=0$
for all $v\in\mathfrak{M}^{\prime}$. This is true even if $M_{\sigma}$ is unimodular,
because field homomorphisms from $\mathbb{Q}(\alpha)$ to $\mathbb{Q}(\alpha_{i})$
are injective.
By the decomposition associated with $M_{\sigma}$
\[
\vector{w}=\sum_{i=1}^{r}\frac{\langle\overline{\vector{w}},\vector{v}_{i}\rangle}
{\langle\vector{u}_{i},\vector{v}_{i}\rangle}
\vector{u}_{i}+
\sum_{i=r+1}^{r+s}\frac{\langle\overline{\vector{w}},\vector{v}_{i}\rangle}
{\langle\overline{\vector{u_{i}}},\vector{v}_{i}\rangle}\vector{u}_{i}+
\frac{\langle\overline{\vector{w}},\overline{\vector{v}_{i}}\rangle}
{\langle\vector{u_{i}},\overline{\vector{v}_{i}}\rangle}\overline{\vector{u}_{i}},
\]
it follows that $\vector{w}=0$, and the converse is obvious.
\end{proof}

\section{Multiple Tiling Of $K_{\sigma}$}\label{tiling}

Let $Z=\bigcup_{i\geq 0}M_{\sigma}^{-i}\mathbb{Z}^{n}$ and set $\vector{v}=
\vector{v}(\alpha)=\vector{v}_{1}$.
\begin{defn}
The \textit{translation set} $\Gamma$ is defined by
\[
\Gamma=\{(\Phi^{\prime}(\langle\vector{w},\vector{v}\rangle),a)\in K_{\sigma}\times\mathcal{A}
:\vector{w}\in Z,
\langle\vector{w},\vector{v}\rangle\geq 0,
\langle\vector{w}-\vector{e}_{a},\vector{v}\rangle< 0 \}.
\]
\end{defn}

\begin{rem}
Let $\vector{v}=(v_{1},\ldots,v_{n})$ and consider the $\mathbb{Z}$-module
$V=\langle v_{1},\ldots,v_{n}\rangle_{\mathbb{Z}}$.
As mentioned in comments after Theorem~7.3 of \cite{minervino2014geometry},
the translation set $\Gamma$ defined above is the same as
\[
\{ (\Phi^{\prime}(w),a)\in K_{\sigma}\times\mathcal{A}:
 w\in V\cdot\mathbb{Z}[\alpha^{-1}]\cap[0,\langle \vector{e}_{a},\vector{v}\rangle) \}.
\]
To put it more precisely, 
\begin{itemize}
\item Every $v_{i}$ belongs to $q^{-1}\mathbb{Z}[\alpha]$ for some positive integer $q$, and therefore
$V\cdot\mathbb{Z}[\alpha^{-1}]=\mathbb{Z}[\alpha^{-1}]v_{1}+
\cdots+\mathbb{Z}[\alpha^{-1}]v_{n}$ is a fractional ideal of $\mathbb{Z}[\alpha^{-1}]$
(\S 3 of \cite{minervino2014geometry}),
\item
$\langle M_{\sigma}^{-k}\vector{w},\vector{v}\rangle=\alpha^{-k}
\langle \vector{w},\vector{v}\rangle$ for $k\in\mathbb{Z}$,
\item The isomorphism 
between $Z$ and $V\cdot\mathbb{Z}[\alpha^{-1}]$ is given by 
$\vector{w}\mapsto\langle \vector{w},\vector{v}\rangle$
(Lemma~7.1 of \cite{minervino2014geometry}).
\end{itemize}

The reason why we prefer our definition of $\Gamma$ is
that it will be essential for our discussion in $\S\ref{distorsion}$ .
\end{rem}

\begin{rem}
By Lemma~\ref{latticeII},
each $(\gamma,a)\in \Gamma$ corresponds to a unique $\vector{w}\in Z$
so that $\gamma=\Phi^{\prime}(\langle \vector{w},\vector{v}\rangle)$.
\end{rem}

Let $\mathcal{T}=\{ T_{i}\}_{i}$ be a collection of compact subsets of $K_{\sigma}$
with $\mu(\partial T_{i})=0$ for every $T_{i}$.
Define the {\it covering degree} at $X\in K_{\sigma}$ by
\[
d_{\mathrm{cov}}(X)=\#\{ T_{i}\in\mathcal{T}:X\in T_{i}\}
\]
(see Definition~5.65 of \cite{sing2006pisot}).
We refer to $\mathcal{T}$ as a {\it multiple tiling} if 
each $T_{i}\in\mathcal{T}$ is the closure of its interior 
and if there exists a positive integer $d_{\mathrm{cov}}\geq 1$ so that
$d_{\mathrm{cov}}(X)=d_{\mathrm{cov}}$ for $\mu$-almost every $X\in K_{\sigma}$.
If $d_{\mathrm{cov}}=1$, then $\mathcal{T}$ is called a {\it tiling}.

\begin{lem}[tiling criterion]\label{t_criterion}
Let $\mathcal{T}$ be a  multiple tiling.
The followings are equivalent.
\begin{itemize}
\item[(1)]$\mathcal{T}$ is a {\it tiling}.
\item[(2)]There exists one member $T_{i}\in\mathcal{T}$ so that
$\mathrm{int}T_{i}\cap\mathrm{int}T_{j}=\emptyset$ for $i\neq j$.
\end{itemize}
\end{lem}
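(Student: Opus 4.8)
The plan is to reduce everything to two elementary facts about $K_{\sigma}$ and $\mathcal{T}$. First, since $\mu$ is a Haar measure on the locally compact group $K_{\sigma}$, its support is all of $K_{\sigma}$, so every nonempty open set has strictly positive measure. Second, the collection $\mathcal{T}$ is indexed by $\Gamma\subset K_{\sigma}\times\mathcal{A}$, and $\Gamma$ is countable because $\vector{w}$ ranges over the countable set $Z=\bigcup_{i\geq0}M_{\sigma}^{-i}\mathbb{Z}^{n}$; hence $N:=\bigcup_{i}\partial T_{i}$ is a countable union of $\mu$-null sets (each $\mu(\partial T_{i})=0$ by hypothesis) and therefore $\mu(N)=0$. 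I will also use the topological identity $T_{j}=\mathrm{int}\,T_{j}\sqcup\partial T_{j}$, valid for every closed set $T_{j}$.

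For $(1)\Rightarrow(2)$ I argue by contradiction that in a tiling all interiors are already pairwise disjoint, so every member witnesses $(2)$. If $\mathrm{int}\,T_{i}\cap\mathrm{int}\,T_{j}\neq\emptyset$ for some $i\neq j$, this set is open and nonempty, hence of positive measure, and each of its points lies in both $T_{i}$ and $T_{j}$; thus $d_{\mathrm{cov}}(X)\geq2$ on a set of positive measure, contradicting $d_{\mathrm{cov}}(X)=d_{\mathrm{cov}}=1$ for $\mu$-almost every $X$.

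The real content is $(2)\Rightarrow(1)$. Let $T_{0}$ be a member with $\mathrm{int}\,T_{0}\cap\mathrm{int}\,T_{j}=\emptyset$ for all $j\neq0$, and put $U=\mathrm{int}\,T_{0}$. Because $T_{0}=\overline{\mathrm{int}\,T_{0}}$ is nonempty, $U$ is a nonempty open set and $\mu(U)>0$. I claim $d_{\mathrm{cov}}(X)=1$ for every $X\in U\setminus N$. Indeed $X\in T_{0}$; and for each $j\neq0$ the hypothesis gives $X\notin\mathrm{int}\,T_{j}$, while $X\notin N$ gives $X\notin\partial T_{j}$, so by the identity above $X\notin T_{j}$. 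Hence $T_{0}$ is the only member of $\mathcal{T}$ containing $X$, i.e.\ $d_{\mathrm{cov}}(X)=1$ on the positive-measure set $U\setminus N$. Since $d_{\mathrm{cov}}(X)$ equals the constant $d_{\mathrm{cov}}$ almost everywhere, we conclude $d_{\mathrm{cov}}=1$, which is exactly the assertion that $\mathcal{T}$ is a tiling.

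No step here is a genuine obstacle: the whole argument rests on the positivity of Haar measure on open sets and on the vanishing of $\mu(N)$ via countable additivity of the null boundaries, both immediate in the present adelic setting. This is by design --- the lemma is meant only to convert the global analytic statement \emph{``$\mathcal{T}$ is a tiling''} into the purely local, tile-by-tile statement $(2)$, so that the genuine difficulty of establishing $(2)$ (through the perturbation and weak-mixing arguments) is deferred to the later sections.
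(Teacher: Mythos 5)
Your proof is correct and takes essentially the same route as the paper's: in both directions one exhibits an open set of positive Haar measure on which $d_{\mathrm{cov}}(X)\geq 2$ (resp.\ $d_{\mathrm{cov}}(X)=1$) and compares it with the almost-everywhere constant covering degree of a multiple tiling. Your explicit removal of the null set $N=\bigcup_{j}\partial T_{j}$ (justified by countability of $\Gamma$ and $\mu(\partial T_{j})=0$) merely makes precise a point the paper's one-line proof of $(2)\Rightarrow(1)$ leaves implicit --- a point of $\mathrm{int}\,T_{0}$ could a priori still lie on the \emph{boundary} of some other tile --- so it is a careful refinement of the same argument rather than a different one.
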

\begin{proof}
Observe that if $\mathrm{int}T_{i}\cap\mathrm{int}T_{j}\neq\emptyset$ ($i\neq j$),
there is an open set of $T_{i}$  for which  $d_{\mathrm{cov}}(X)\geq 2$,
and hence (1) implies (2). Assume (2). Then $d_{\mathrm{cov}}(X)=1$ 
for an open set of positive measure
in $T_{i}$. Since $d_{\mathrm{cov}}(X)=d_{\mathrm{cov}}$ a.e. 
in the case of multiple tiling,
it follows that $d_{\mathrm{cov}}=1$, which implies (1).
\end{proof}

Let
\[
\mathcal{T}=\{\mathcal{R}_{\sigma}(a)+\gamma: (\gamma, a)\in\Gamma\}.
\]

\begin{thm}[Theorem~9.2 of \cite{minervino2014geometry},
\cite{ito2006atomic} and \cite{barge2006geometric}]\label{mt}
Let $\sigma$ be an irreducible Pisot substitution. Then
$\mathcal{T}$
is a multiple tiling of $K_{\sigma}$.
\end{thm}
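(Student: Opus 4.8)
The plan is to verify, one at a time, the three requirements in the definition of a multiple tiling: that every member of $\mathcal{T}$ is a compact set equal to the closure of its interior and with $\mu$-null boundary; that $\mathcal{T}$ covers $K_{\sigma}$; and that the covering degree $d_{\mathrm{cov}}(X)$ is finite, uniformly bounded, and constant for $\mu$-almost every $X$. The first requirement is immediate from what is already established: each $\mathcal{R}_{\sigma}(a)$ is compact by Lemma~\ref{continuous} together with the local compactness of the $K_{v}$, while (\ref{ca}) gives $\mathcal{R}_{\sigma}(a)=\overline{\mathrm{int}\,\mathcal{R}_{\sigma}(a)}$ and $\mu(\partial\mathcal{R}_{\sigma}(a))=0$; translation invariance of the Haar measure $\mu$ carries all of this to every translate $\mathcal{R}_{\sigma}(a)+\gamma$. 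The real content lies in the covering and, above all, in the constancy of the multiplicity.

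For the geometric bookkeeping I would exploit the cut-and-project structure that the adelic set-up was designed to produce. Because $\mathfrak{M}^{\prime}$ was chosen to include exactly the finite primes $\mathfrak{p}_{i}$ dividing $(\alpha)$, the diagonal embedding $\Phi$ maps the fractional ideal $\langle Z,\vector{v}\rangle=V\cdot\mathbb{Z}[\alpha^{-1}]$ of $\mathbb{Z}[\alpha^{-1}]$ onto a lattice $L$ in the adele ring $K_{\alpha}=\mathbb{R}\times K_{\sigma}$, where the $\mathbb{R}$-factor is the single expanding place $v_{1}$ removed from $\mathfrak{M}$. Under $\pi_{1}$ the physical coordinate of such a point is exactly $\langle\vector{w},\vector{v}\rangle$, and the defining inequalities of $\Gamma$ confine it to the bounded window $[0,\max_{a}\langle\vector{e}_{a},\vector{v}\rangle)$. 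Thus $\Gamma$ is a labelled cut-and-project set: for a fixed compact $C\subset K_{\sigma}$ the pairs $(\gamma,a)$ with $\gamma\in C-\mathcal{R}_{\sigma}(a)$ correspond to points of $L$ lying in a precompact slab, of which there are finitely many, and by homogeneity of $L$ their number is bounded independently of the location of $C$. This yields both local finiteness, so that $d_{\mathrm{cov}}(X)<\infty$ for every $X$, and a uniform bound $d_{\mathrm{cov}}\le M_{0}$. That $\mathcal{T}$ actually covers $K_{\sigma}$ then follows by iterating the measure-disjoint set equation (\ref{seteqI}): as $m\to\infty$ the pieces $\alpha^{m}\mathcal{R}_{\sigma}(b)$ shrink by (\ref{haar}), while the relatively dense family of $\Gamma$-translates of the subtiles, whose interiors are nonempty by (\ref{ca}), leaves no region uncovered, giving $d_{\mathrm{cov}}(X)\ge 1$ almost everywhere.

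The crux is the almost-everywhere constancy of $d_{\mathrm{cov}}$, and here the main obstacle is that the window constraint in the definition of $\Gamma$ breaks the naive invariance of $d_{\mathrm{cov}}$ under translation by $\pi_{2}(L)$: shifting $X$ by $\Phi^{\prime}(\langle\vector{w}_{0},\vector{v}\rangle)$ shifts the admissible physical window rather than fixing it. The way past this is to pass to the compact abelian group $\mathbb{T}=K_{\alpha}/L$, on which the translation $R$ by the image of a suitable generator (encoding the $\alpha$-numeration) is a minimal, uniquely ergodic rotation. Assembling the windows $[0,\langle\vector{e}_{a},\vector{v}\rangle)\times\mathcal{R}_{\sigma}(a)$ into a subset of $\mathbb{T}$, one identifies $d_{\mathrm{cov}}(X)$ with the cardinality of the fibre, over the corresponding point of $\mathbb{T}$, of the natural measurable factor map from the substitution dynamical system (realized geometrically on $\mathcal{R}_{\sigma}$ via the domain exchange) onto $R$. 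Since both systems are ergodic and the overlapping boundaries contribute nothing by $\mu(\partial\mathcal{R}_{\sigma}(a))=0$, the fibre cardinality is $\mu$-almost everywhere constant, which is precisely the assertion $d_{\mathrm{cov}}(X)=d_{\mathrm{cov}}$ a.e. I expect the genuinely delicate point to be the careful identification of the geometric covering degree with this fibre count on a set of full measure — controlling the behaviour on the common boundaries — rather than the ergodicity itself; this is what is carried out in Theorem~9.2 of \cite{minervino2014geometry} (see also \cite{ito2006atomic} and \cite{barge2006geometric}), and combined with the previous two steps it shows that $\mathcal{T}$ is a multiple tiling of $K_{\sigma}$.
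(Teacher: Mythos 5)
Your write-up targets a statement the paper never proves: Theorem~\ref{mt} is imported verbatim from the literature (Theorem~9.2 of \cite{minervino2014geometry}; see also \cite{ito2006atomic}, \cite{barge2006geometric}) and serves purely as an input to the paper's actual contribution, namely the upgrade from multiple tiling to tiling carried out in Sections~\ref{perturbation}--\ref{distorsion}. So there is no internal proof to compare against, and your proposal must be judged as a reconstruction of the cited argument. Your first step is sound and matches facts the paper records: compactness of $\mathcal{R}_{\sigma}(a)$ via Lemma~\ref{continuous} and local compactness of the $K_{v}$, interior-density and null boundary from (\ref{ca}), all preserved under translation by the Haar measure's invariance. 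Likewise, local finiteness and a uniform bound on the covering degree via the model-set structure of $\Gamma_{a}$ agree with the paper's own use of (\ref{derived}) and the Delone property in the lemma following Theorem~\ref{mt}.

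There are, however, two genuine soft spots. First, your covering argument points the wrong way: the shrinking of the pieces $\alpha^{m}\mathcal{R}_{\sigma}(b)$ inside a fixed subtile says nothing about points far from $\mathcal{R}_{\sigma}$, and relative density of $\Gamma_{a}$ alone does not preclude gaps, since $\Gamma$ is not a group and the union of tiles is not a priori invariant under any translation. The correct mechanism is the \emph{expanding} reading of the set equation (\ref{seteqI}): $\alpha^{-k}\mathcal{R}_{\sigma}(a)$ is tiled by the patch $(\pi_{2}\times \mathrm{id})\circ T^{-k}_{\mathrm{ext}}(\vector{0},a)\subset\Gamma$ and contains balls of radius tending to infinity, and quasi-periodicity of $\Gamma$ (Theorem~\ref{quasi}) transports such tiled balls to every location --- exactly the mechanism the paper itself exploits in Lemma~\ref{ball}. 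Second, the crux --- almost-everywhere constancy of $d_{\mathrm{cov}}$ --- is not proved in your text but explicitly deferred to Theorem~9.2 of \cite{minervino2014geometry}. Your idea of identifying $d_{\mathrm{cov}}$ with the fibre cardinality of a factor map onto the rotation on $K_{\alpha}/\Phi(V\cdot\mathbb{Z}[\alpha^{-1}])$ is the right circle of ideas (fibre counts of factor maps between ergodic systems are a.e.\ constant), but establishing that identification off a null set \emph{is} the substance of the cited theorem, so as a self-contained proof your proposal has a hole precisely at the decisive step; a more elementary alternative is to note that two points whose $\Gamma$-patches agree on a sufficiently large ball have equal degree away from the null set of tile boundaries, and then let quasi-periodicity spread one value from a positive-measure set to almost every $X$. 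Since the paper itself takes the theorem wholesale from the literature, your deferral is consistent with its treatment --- but you should present your text as an annotated outline of the citation, with the covering step repaired as above, rather than as an independent proof.
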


A subset $W$ of $K_{\sigma}$ is a {\it Delone set} if it is relatively dense and uniformly discrete.
Equivalently $W$ is a Delone set if and only if there exist $r_{1},r_{2}>0$
so that 
\[
\# (W\cap\overline{B(X,r_{1}
)})\geq 1\quad\mathrm{and}\quad
\# (W\cap B(X,r_{2}))\leq 1
\]
for all $X\in K_{\sigma}$.
Lemma~6.6 of \cite{minervino2014geometry} asserts that
a subset of $K_{\sigma}$ derived from $\Gamma$
\begin{equation}\label{derived}
\Gamma_{a}:=\{ \gamma=\Phi^{\prime}(\langle\vector{w},\vector{v}\rangle)
:\vector{w}\in Z,
\langle\vector{w},\vector{v}\rangle\geq 0,
\langle\vector{w}-\vector{e}_{a},\vector{v}\rangle< 0 \}
\end{equation}
is a Delone set (inter model set) for each $a\in\mathcal{A}$.

\begin{lem}
There are only a finite number of $(\gamma,a)\in\Gamma$
for which 
\begin{equation}\label{f_intersect}
\mathcal{R}_{\sigma}\cap (\mathcal{R}_{\sigma}(a)+\gamma)\neq\emptyset.
\end{equation}
\end{lem}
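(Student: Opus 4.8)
The plan is to exploit the compactness of the Rauzy fractal together with the uniform discreteness of the translation set. First, recall from the discussion following Lemma~\ref{continuous} that $\mathcal{R}_{\sigma}$ is compact, hence bounded: there is $R>0$ with $\mathcal{R}_{\sigma}\subseteq\overline{B(0,R)}$. Since $\Sigma_{A}(a)\subseteq\Sigma_{A}$, we have $\mathcal{R}_{\sigma}(a)=\Psi(\Sigma_{A}(a))\subseteq\Psi(\Sigma_{A})=\mathcal{R}_{\sigma}\subseteq\overline{B(0,R)}$ for every $a\in\mathcal{A}$, so all the subtiles live in one common compact ball.

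Second, I would show that a nonempty intersection forces $\gamma$ into a fixed compact ball. Suppose (\ref{f_intersect}) holds and pick $X\in\mathcal{R}_{\sigma}\cap(\mathcal{R}_{\sigma}(a)+\gamma)$. Writing $Y=X-\gamma\in\mathcal{R}_{\sigma}(a)$, both $X$ and $Y$ lie in $\overline{B(0,R)}$. As the metric $d_{K}$ is translation invariant (it depends only on the coordinatewise differences of its arguments), one has $d_{K}(\gamma,0)=d_{K}(X,Y)\leq d_{K}(X,0)+d_{K}(0,Y)\leq 2R$, so $\gamma\in\overline{B(0,2R)}$.

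Third, I would conclude by the Delone property. For each fixed $a\in\mathcal{A}$, the set $\Gamma_{a}$ of (\ref{derived}) is a Delone set, in particular uniformly discrete: there is $r_{2}>0$ such that each ball of radius $r_{2}$ contains at most one point of $\Gamma_{a}$. Because $\overline{B(0,2R)}$ is compact (a closed ball in $K_{\sigma}$ is compact, as noted after Lemma~\ref{continuous}), it is covered by finitely many balls of radius $r_{2}$, whence $\Gamma_{a}\cap\overline{B(0,2R)}$ is finite. Summing over the finite alphabet $\mathcal{A}$ shows that there are only finitely many pairs $(\gamma,a)\in\Gamma$ satisfying (\ref{f_intersect}).

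The argument is a soft compactness-and-discreteness estimate, so I do not anticipate a genuine obstacle; the only points requiring care are the translation invariance of $d_{K}$ used to trap $\gamma$ in $\overline{B(0,2R)}$, and the standard fact that a uniformly discrete set meets a compact set in only finitely many points, which follows from the finite subcover above.
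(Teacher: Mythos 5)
Your proof is correct and follows essentially the same route as the paper: trap $\gamma$ in a fixed compact ball using the boundedness of $\mathcal{R}_{\sigma}$ and its subtiles, then invoke the uniform discreteness of the Delone set $\Gamma_{a}$ from (\ref{derived}) together with a finite subcover of the ball, and sum over the finite alphabet. Your explicit $2R$ triangle-inequality computation via the translation invariance of $d_{K}$ merely makes precise the paper's choice of a single sufficiently large radius $R$, so the two arguments differ only cosmetically.
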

\begin{proof}
Notice that the diameter of $\mathcal{R}_{\sigma}(a)$ is bounded
for every $a$.
Then one can choose $R>0$ so that $\mathcal{R}_{\sigma}$ is contained in a ball $B(0,R)$
and that if $\gamma\not\in B(0,R)$, $\mathcal{R}_{\sigma}(a)+\gamma$
does not intersect with $\mathcal{R}_{\sigma}$ for $(\gamma,a)\in\Gamma$.

Given $a\in \mathcal{A}$,
since $\Gamma_{a}$ in (\ref{derived})
is a Delone set, there exists $r_{2}>0$ so that
$\#(\Gamma_{a}\cap B(X,r_{2}))\leq 1$ for all $X\in K_{\sigma}$.
Cover the closed ball $\overline{B}(0,R)$ with balls of radius $r_{2}$.
Since $\overline{B}(0,R)$ is compact,
one can take a finite number of balls of radius $r_{2}$ which cover $\overline{B}(0,R)$.
So the number of $(\gamma,a)$ satisfying (\ref{f_intersect}) is
less than this finite number. Since $\#\mathcal{A}=n$, the proof completes.
\end{proof}

If we rescale $\vector{v}$ by $c\vector{v}$ 
(a scalar multiplication $c>0$), the Rauzy fractal $\mathcal{R}_{\sigma}$,
its subtile $\mathcal{R}_{\sigma}(a)$ and any 
translation $\gamma$ for $(\gamma,a)\in\Gamma$
get transformed accordingly into
$c\mathcal{R}_{\sigma}$,$c\mathcal{R}_{\sigma}(a)$
 and $c\gamma$ respectively. Yet
a finite subset
\[
Z_{0}=\{ \vector{w}\in Z:
\mathcal{R}_{\sigma}\cap (\mathcal{R}_{\sigma}(a)+\gamma)\neq\emptyset,
\quad\gamma=
\Phi^{\prime}(\langle\vector{w},\vector{v}\rangle)\ \mathrm{for}\ (\gamma,a)\in\Gamma \}
\]
still remains the same for any scalar multiple  of $\vector{v}$.
Therefore {\it one can take a common integer $c>0$ so that
$\langle\vector{w},c\vector{v}\rangle\in\mathfrak{O}_{K}$
for all $\vector{w}\in Z_{0}$}. As mentioned earlier, when $M_{\sigma}$ is  unimodular,
this prescription is not needed because of $Z=\mathbb{Z}^{n}$. 
In the sequel, we will adopt this scaling $c\vector{v}$ and denote it by $\vector{v}$.

\section{Perturbation}\label{perturbation}

First we will prove that $\mathcal{T}$ is a tiling ($d_{\mathrm{cov}}=1$).
Since the proof is long, it will be convenient to divide it into several sections 
(Section~\ref{perturbation}-\ref{distorsion}).
Suppose that
\[
\mathrm{int}\mathcal{R}_{\sigma}(a)\cap\mathrm{int}
(\mathcal{R}_{\sigma}(b)+\gamma)\neq\emptyset\quad
\mathrm{for}\ a\in\mathcal{A}\ \mathrm{and}\ (\gamma,b)\in\Gamma.
\]
In view of Theorem~\ref{mt} and 
Lemma~\ref{t_criterion}, it is enough to show
that no other cases than $a=b$ and $\gamma=0$
can occur.

Denote the mapping $\omega\mapsto\Psi(\omega)+\gamma$ by $\Psi+\gamma$.
The subset
\[
(\Psi\times\Psi+\gamma)^{-1}\{ (X,X):
X\in \mathcal{R}_{\sigma}(a)\cap(\mathcal{R}_{\sigma}(b)+\gamma)\}
\subset\Sigma_{A}(a)\times \Sigma_{A}(b) 
\]
has measure  zero with respect to $m\times m$.
We perturb it to make Ergodic theory applicable.

\begin{lem}
Take an open set $B_{0}$ so small that 
$\overline{B}_{0}
\subset
\mathrm{int}\mathcal{R}_{\sigma}(a)\cap\mathrm{int}
(\mathcal{R}_{\sigma}(b)+\gamma)$.
Then one can find $\epsilon>0$ so that 
\[
\overline{B}_{0}\subset
\mathrm{int}\mathcal{R}_{\sigma}(a)\cap\mathrm{int}
(\mathcal{R}_{\sigma}(b)+\gamma+\zeta)\quad\mathrm{for\ all\ }\zeta\in B(0,\epsilon).
\]
\end{lem}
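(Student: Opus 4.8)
The plan is to reduce the statement to an elementary compactness argument, exploiting two structural facts about $K_{\sigma}$: translation by any fixed element is a homeomorphism, and the metric $d_{K}$ is translation-invariant (it depends only on the differences $X_{v}-Y_{v}$). First I would dispose of the first factor. Since the perturbation $\zeta$ acts only on $\mathcal{R}_{\sigma}(b)+\gamma$, the inclusion $\overline{B}_{0}\subset\mathrm{int}\mathcal{R}_{\sigma}(a)$ is unaffected and holds for every $\zeta$. Hence it suffices to produce $\epsilon>0$ guaranteeing $\overline{B}_{0}\subset\mathrm{int}(\mathcal{R}_{\sigma}(b)+\gamma+\zeta)$ for all $\zeta\in B(0,\epsilon)$.

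Next I would transport the perturbation out of the interior operation. Because translation by $\zeta$ is a homeomorphism of $K_{\sigma}$, we have $\mathrm{int}(\mathcal{R}_{\sigma}(b)+\gamma+\zeta)=\mathrm{int}(\mathcal{R}_{\sigma}(b)+\gamma)+\zeta$. Writing $U:=\mathrm{int}(\mathcal{R}_{\sigma}(b)+\gamma)$, the target inclusion is equivalent to $\overline{B}_{0}-\zeta\subset U$, so the problem becomes: show that the compact set $\overline{B}_{0}$ stays inside the open set $U$ after a sufficiently small translation.

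The key step is then a distance estimate. The set $\overline{B}_{0}$ is compact, being a closed subset of the compact subtile $\mathcal{R}_{\sigma}(a)$, and it is contained in the open set $U$ by hypothesis. If $U=K_{\sigma}$ the claim is trivial; otherwise the quantity $\delta:=\inf\{d_{K}(x,y):x\in\overline{B}_{0},\,y\in K_{\sigma}\setminus U\}$ is attained and strictly positive by compactness of $\overline{B}_{0}$ and closedness of $K_{\sigma}\setminus U$. Setting $\epsilon=\delta$, for any $\zeta\in B(0,\epsilon)$ and any $x\in\overline{B}_{0}$ translation-invariance gives $d_{K}(x-\zeta,x)=d_{K}(\zeta,0)<\delta$, so for every $y\in K_{\sigma}\setminus U$ the triangle inequality yields $d_{K}(x-\zeta,y)\geq d_{K}(x,y)-d_{K}(x-\zeta,x)>0$. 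Thus $x-\zeta\in U$, giving $\overline{B}_{0}-\zeta\subset U$, which is precisely what was required.

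I do not expect a genuine obstacle here; the argument is essentially bookkeeping. The only point deserving care is the reliance on translation-invariance of $d_{K}$, which is what lets the single bound $\epsilon=\delta$ work uniformly across all of $\overline{B}_{0}$ rather than pointwise. Since the chosen metric is manifestly translation-invariant, this causes no difficulty.
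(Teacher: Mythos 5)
Your proof is correct and takes essentially the same route as the paper: both arguments rest on the strictly positive distance between the compact set $\overline{B}_{0}$ and the closed set it must avoid, combined with the triangle inequality and translation-invariance of $d_{K}$. The only cosmetic difference is that the paper sets $2\epsilon$ equal to the distance from $\partial\mathcal{R}_{\sigma}(b)+\gamma$ to $\overline{B}_{0}$ and shows the translated boundary stays $\epsilon$-away, whereas you measure the distance to the complement of $\mathrm{int}(\mathcal{R}_{\sigma}(b)+\gamma)$ and transfer $\zeta$ onto $\overline{B}_{0}$, which if anything makes the final inclusion $x-\zeta\in\mathrm{int}(\mathcal{R}_{\sigma}(b)+\gamma)$ explicit where the paper leaves it implicit.
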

\begin{proof}
Write $d_{K}(X,\overline{B}_{0})=\inf_{Y\in \overline{B}_{0}} d_{K}(X,Y)$. It is standard that 
\begin{equation}\label{distance}
\vert d_{K}(X,\overline{B}_{0})-d_{K}(Y,\overline{B}_{0})\vert
\leq d_{K}(X,Y).
\end{equation}
Set
\[ 
2\epsilon=\inf_{Y\in\partial \mathcal{R}_{\sigma}(b)+\gamma}d_{K}(Y,\overline{B}_{0})>0.
\]
If $Y\in\partial \mathcal{R}_{\sigma}(b)+\gamma$ and $\zeta\in B(0,\epsilon)$,
replacing $X$ and $Y$ in (\ref{distance}) by $Y$ and $Y+\zeta$ 
yields 
$d_{K}(Y+\zeta,\overline{B}_{0})\geq\epsilon$, which completes the proof.
\end{proof}

Set $B=\Psi^{-1}B_{0}$ and define
\[
\Delta_{\epsilon}
=\bigcup_{\omega_{1}\in B}\{ \omega_{1}\}\times (\Psi+\gamma)^{-1}B(\Psi(\omega_{2}
)+\gamma,\epsilon),
\quad \Psi(\omega_{1})=\Psi(\omega_{2})+\gamma.
\]

\begin{rem}A more intuitive form is
\[
\Delta_{\epsilon}
=\bigcup_{\zeta\in B(0,\epsilon)}
\bigcup_{\omega_{1}\in B}\{ \omega_{1}\}\times (\Psi+\gamma+\zeta)^{-1}\Psi(\omega_{1}).
\]
\end{rem}
\begin{lem}\label{positive}
$\Delta_{\epsilon}$
is measurable and has a positive measure in $\Sigma_{A}(a)\times\Sigma_{A}(b)$.
\end{lem}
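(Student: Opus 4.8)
The plan is to first rewrite $\Delta_{\epsilon}$ in a form where continuity makes measurability transparent, and then obtain positivity by a Tonelli argument that reduces everything to the fact that the Parry measure charges every nonempty cylinder. For the rewriting, observe that for $\omega_{1}\in B$ the defining relation $\Psi(\omega_{2})+\gamma=\Psi(\omega_{1})$ lets one replace $B(\Psi(\omega_{2})+\gamma,\epsilon)$ by $B(\Psi(\omega_{1}),\epsilon)$, and since the metric $d_{K}$ depends only on differences (translation invariance), a point $\omega_{2}'\in\Sigma_{A}(b)$ lies in the $\omega_{1}$-section exactly when $d_{K}(\Psi(\omega_{2}')+\gamma,\Psi(\omega_{1}))<\epsilon$ (the preimage in the definition being understood inside $\Sigma_{A}(b)$, as the ambient space is $\Sigma_{A}(a)\times\Sigma_{A}(b)$). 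Thus
\[
\Delta_{\epsilon}=\{(\omega_{1},\omega_{2}')\in B\times\Sigma_{A}(b): d_{K}(\Psi(\omega_{2}')+\gamma,\Psi(\omega_{1}))<\epsilon\}.
\]
Because $\Psi$ and $\Psi+\gamma$ are continuous (Lemma~\ref{continuous}) and $d_{K}$ is continuous, the map $(\omega_{1},\omega_{2}')\mapsto d_{K}(\Psi(\omega_{2}')+\gamma,\Psi(\omega_{1}))$ is continuous; moreover $B=\Psi^{-1}B_{0}$ is open (as $B_{0}$ is open) and $\Sigma_{A}(b)$ is a finite union of length-one cylinders, hence clopen. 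Therefore $\Delta_{\epsilon}$ is open in $\Sigma_{A}(a)\times\Sigma_{A}(b)$, and in particular Borel measurable.

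For positivity I would apply Tonelli to the indicator of $\Delta_{\epsilon}$ with respect to the (finite) product measure $m\times m$; since $\Delta_{\epsilon}$ is Borel, the section-measure function is automatically measurable and
\[
(m\times m)(\Delta_{\epsilon})=\int_{B}m(F_{\omega_{1}})\,dm(\omega_{1}),\qquad F_{\omega_{1}}:=\Psi^{-1}\big(B(\Psi(\omega_{1})-\gamma,\epsilon)\big)\cap\Sigma_{A}(b),
\]
where the identification of the $\omega_{1}$-section with $F_{\omega_{1}}$ again uses translation invariance of $d_{K}$. It then suffices to show $m(B)>0$ and $m(F_{\omega_{1}})>0$ for every $\omega_{1}\in B$, because a nonnegative measurable function that is strictly positive on a set of positive measure has positive integral.

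The heart of the matter is the section positivity. Fix $\omega_{1}\in B$. By the choice of $B_{0}$ we have $\Psi(\omega_{1})\in B_{0}\subset\mathrm{int}(\mathcal{R}_{\sigma}(b)+\gamma)$, so $\Psi(\omega_{1})-\gamma\in\mathrm{int}\mathcal{R}_{\sigma}(b)$. Hence $U:=B(\Psi(\omega_{1})-\gamma,\epsilon)\cap\mathrm{int}\mathcal{R}_{\sigma}(b)$ is a nonempty open subset of $K_{\sigma}$ contained in $\mathcal{R}_{\sigma}(b)=\Psi(\Sigma_{A}(b))$. Consequently $\Psi^{-1}(U)\cap\Sigma_{A}(b)$ is nonempty (each point of $U$ is the $\Psi$-image of some $\omega'\in\Sigma_{A}(b)$) and open in $\Sigma_{A}$, so it contains a cylinder $\langle c_{0}c_{1}\cdots c_{k}\rangle$. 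By the Parry measure formula $m(\langle c_{0}c_{1}\cdots c_{k}\rangle)=p_{c_{0}}p_{c_{0}c_{1}}\cdots p_{c_{k-1}c_{k}}>0$, whence $m(F_{\omega_{1}})\geq m(\Psi^{-1}(U)\cap\Sigma_{A}(b))>0$. The same cylinder argument applied to the nonempty open set $B=\Psi^{-1}B_{0}$ (nonempty because $B_{0}\subset\mathrm{int}\mathcal{R}_{\sigma}(a)\subset\Psi(\Sigma_{A}(a))$) gives $m(B)>0$, and the integral above is therefore strictly positive.

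I do not expect a genuine obstacle here; the difficulties are purely bookkeeping. The two points that require care are the correct identification of the $\omega_{1}$-section of $\Delta_{\epsilon}$ with $F_{\omega_{1}}$ through the translation invariance of $d_{K}$, and the verification that $\Psi$ restricted to $\Sigma_{A}(b)$ already surjects onto the interior points of $\mathcal{R}_{\sigma}(b)$. Both follow immediately from the definitions together with $\mathcal{R}_{\sigma}(b)=\Psi(\Sigma_{A}(b))$, so no input beyond Lemma~\ref{continuous} and the strict positivity of the Parry weights $p_{I},p_{IJ}$ is needed.
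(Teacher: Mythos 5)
Your proof is correct and follows essentially the same route as the paper: openness of $\Delta_{\epsilon}$ via continuity of $\Psi$ (Lemma~\ref{continuous}), then positivity by Fubini/Tonelli using that each $\omega_{1}$-section is a nonempty open subset of $\Sigma_{A}$ and that nonempty open sets contain cylinders of positive Parry measure. You merely spell out details the paper leaves implicit (the identification of the section via translation invariance of $d_{K}$, nonemptiness from $\Psi(\omega_{1})-\gamma\in\mathrm{int}\,\mathcal{R}_{\sigma}(b)=\mathrm{int}\,\Psi(\Sigma_{A}(b))$, and $m(B)>0$), so there is no substantive difference.
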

\begin{proof}
First we show that $\Delta_{\epsilon}$ is open in  $\Sigma_{A}(a)\times\Sigma_{A}(b)$ and
hence measurable.
Take any $(\omega_{1},\omega_{2})\in \Delta_{\epsilon}$.
This means $\omega_{2}\in (\Psi+\gamma)^{-1}B(\Psi(\omega_{1}),\epsilon)$.
Then there exists $\rho_{0}>0$
so that
\[
d_{K}(\Psi(\omega_{1}),\Psi(\omega_{2})+\gamma)=\epsilon-\rho_{0}.
\]
By the continuity of $\Psi$ (Lemma~\ref{continuous}), we can choose 
$\rho>0$ and $\rho^{\prime}>0$ so that for all $\tilde{\omega}\in
B(\omega_{1},\rho)$ and $\hat{\omega}\in B(\omega_{2},\rho^{\prime})$,
\[
d_{K}(\Psi(\tilde{\omega}),\Psi(\omega_{1}))<\frac{\rho_{0}}{2},\quad
d_{K}(\Psi(\hat{\omega})+\gamma,\Psi(\omega_{2})+\gamma)<\frac{\rho_{0}}{2}.
\]
Hence 
\[
d_{K}(\Psi(\tilde{\omega}),\Psi(\hat{\omega})+\gamma)
\]
\[
\leq
d_{K}(\Psi(\tilde{\omega}),\Psi(\omega_{1}))+d_{K}(\Psi(\omega),\Psi(\omega_{2})+\gamma)
+d_{K}(\Psi(\omega_{2})+\gamma,\Psi(\hat{\omega})+\gamma)<\epsilon,
\]
which means $B(\omega_{1},\rho)\times B(\omega_{2},\rho^{\prime})\subset\Delta_{\epsilon}$.
Hence $\Delta_{\epsilon}$ is open and Borel measurable.

Since each section $\{ \omega_{1}\}\times (\Psi+\gamma)^{-1}
B(\Psi(\omega_{2})
+\gamma,\epsilon)$ of $\Delta_{\epsilon}$ is a non-empty open set in $\{
\omega_{1}\}\times\Sigma_{A}$ and 
has a positive measure with respect to $m$,
it follows that
$m\times m(\Delta_{\epsilon})>0$ by Fubini's theorem.
\end{proof}

\section{Special Cylinder}\label{specialcylinder}

\begin{lem}[Adelic version of Garsia's lemma]\label{adelicgarsia}
Let $F$ be a polynomial of degree at most $d$ with integer coefficients 
having an upper bound
$M$ in modulus. If $F(\alpha)\neq 0$, then
\[
\prod_{v\in\mathfrak{M}}\vert F(\alpha_{v})\vert_{v}
\geq\frac{1-\alpha^{-1}}{\alpha^{d} M}.
\]
\end{lem}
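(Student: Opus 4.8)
The plan is to read the inequality off the \emph{product formula} $\prod_{v}\vert\xi\vert_{v}=1$, applied to $\xi=F(\alpha)$. This application is legitimate: since $F(\alpha)\neq 0$ by hypothesis and $F(\alpha)\in\mathbb{Z}[\alpha]\subseteq K$, we have $F(\alpha)\in K^{\times}$. First I would note that, because $F$ has rational coefficients, evaluating $F$ at $\alpha_{v}=\Phi(\alpha)_{v}$ and then measuring size at $v$ is the same as measuring the size of $F(\alpha)$ itself at $v$: for an infinite prime this follows from $\vert\xi\vert_{v}=\vert\tau_{v}(\xi)\vert$ (resp. $\vert\tau_{v}(\xi)\vert^{2}$) together with $\tau_{v}(F(\alpha))=F(\tau_{v}(\alpha))=F(\alpha_{v})$, while for a finite prime $\alpha_{v}=\alpha$ inside $K_{v}$. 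Hence $\prod_{v\in\mathfrak{M}}\vert F(\alpha_{v})\vert_{v}=\prod_{v\in\mathfrak{M}}\vert F(\alpha)\vert_{v}$, and it suffices to bound the latter.

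Next I would split the full product of the product formula into the three disjoint classes of primes: those in $\mathfrak{M}$, the single expanding prime $v_{1}$, and the finite primes lying outside $\mathfrak{M}'$, i.e.\ the finite primes not dividing $(\alpha)$. Since $\mathfrak{M}\cup\{v_{1}\}=\mathfrak{M}'$, these three classes exhaust all primes of $K$, so the product formula rearranges to
\[
\prod_{v\in\mathfrak{M}}\vert F(\alpha)\vert_{v}
=\frac{1}{\vert F(\alpha)\vert_{v_{1}}\cdot\prod_{v\notin\mathfrak{M}'}\vert F(\alpha)\vert_{v}}.
\]
A lower bound on the left side is therefore equivalent to an upper bound on the denominator, which I would estimate one factor at a time.

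At the expanding prime, $v_{1}$ is the real place with $\vert F(\alpha)\vert_{v_{1}}=\vert F(\alpha)\vert$, and writing $F(x)=\sum_{j=0}^{d}c_{j}x^{j}$ with $\vert c_{j}\vert\le M$ and using $\alpha>1$ gives the crude geometric estimate
\[
\vert F(\alpha)\vert\le M\sum_{j=0}^{d}\alpha^{j}
= M\,\frac{\alpha^{d+1}-1}{\alpha-1}
< \frac{M\alpha^{d+1}}{\alpha-1}
= \frac{M\alpha^{d}}{1-\alpha^{-1}}.
\]
For the finite factors outside $\mathfrak{M}'$, the decisive point is that $F(\alpha)$ is an \emph{algebraic integer}: as $\alpha$ is an algebraic integer and $F$ has integer coefficients, $F(\alpha)\in\mathbb{Z}[\alpha]\subseteq\mathfrak{O}_{K}$, so $v_{\mathfrak{p}}(F(\alpha))\ge 0$ and hence $\vert F(\alpha)\vert_{v}\le 1$ at every finite prime; in particular $\prod_{v\notin\mathfrak{M}'}\vert F(\alpha)\vert_{v}\le 1$. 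Substituting both estimates into the displayed identity yields the claimed bound $\prod_{v\in\mathfrak{M}}\vert F(\alpha)\vert_{v}\ge(1-\alpha^{-1})/(\alpha^{d}M)$.

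The computation is short, so there is no genuine analytic obstacle; the one point demanding care — and the place where the hypotheses are actually used — is the bookkeeping of primes. One must check that the \emph{only} places discarded in passing from the full product formula to the product over $\mathfrak{M}$ are $v_{1}$ and the finite primes not dividing $(\alpha)$, and that each discarded finite factor is $\le 1$ rather than possibly large. It is exactly the integrality of $F(\alpha)$ that controls these omitted finite places: were $F$ permitted non-integer rational coefficients, those factors could exceed $1$ and the stated bound would break down.
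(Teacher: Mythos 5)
Your proof is correct and takes essentially the same route as the paper's: apply the product formula to $F(\alpha)\in K^{\times}$, use the integrality $F(\alpha)\in\mathfrak{O}_{K}$ to discard the finite primes outside $\mathfrak{M}^{\prime}$ (each such factor being $\leq 1$), and bound $\vert F(\alpha)\vert\leq M\alpha^{d}/(1-\alpha^{-1})$ at the expanding place $v_{1}$ by the geometric series. Your explicit place-by-place check that $\vert F(\alpha_{v})\vert_{v}=\vert F(\alpha)\vert_{v}$ is a detail the paper leaves implicit, but the argument is identical.
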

\begin{proof}
When $M_{\sigma}$ is unimodular, this comes down to Garsia's lemma
(Lemma~1.51 of \cite{garsia1962arithmetic}).
Since $F(\alpha)\in\mathfrak{O}_{K}$, it is standard that
$\vert F(\alpha)\vert_{v}\leq 1$ for a finite prime $v$. 
If $\mathfrak{S}$ is a subset of finite primes, it is clear that
$\prod_{v\in\mathfrak{S}}\vert F(\alpha)\vert_{v}\leq 1$.
From the product formula, then it follows that
\[
\prod_{i=1}^{r}\vert F(\alpha_{i})\vert\prod_{i=r+1}^{r+s}\vert F(\alpha_{i})\vert^{2}
\prod_{v\in\mathfrak{M}\backslash\mathfrak{M}_{\infty}}\vert F(\alpha)\vert_{v}\geq 1.
\]
Combining this with $\vert F(\alpha)\vert\leq M\alpha^{d}\sum_{i=0}^{\infty}\alpha^{-i}$,
the proof is complete.
\end{proof}

Let $u=(u_{i})_{i\in\mathbb{Z}}$ be a fixed point of $\sigma$: $\sigma(u)=u$.
Then $\sigma(u_{-1})$ ends with $u_{-1}$, and
$\sigma(u_{0})$ starts with $u_{0}$.
In our discussion, the vertex
$(u_{0}:\emptyset)\in \mathcal{E}$ will play a role.
One feature is that it can make a transition to itself.

\begin{defn}Let $L\geq (N+1)$ be an integer.
If $c_{i}=(u_{0}:\emptyset)$ for $0\leq i\leq L$,
define the {\it special cylinder} $\mathcal{C}$ by 
\[
\mathcal{C}=\langle  c_{0}c_{1}\cdots c_{L}\rangle
=\langle \underbrace{(u_{0}:\emptyset)\cdots
(u_{0}:\emptyset)}_{L+1\ \text{terms}}\rangle.
\]
For $v\in\mathfrak{M}$, set
\[
M_{v}=\max\{\vert\langle f(p),
\vector{v}(\alpha_{v})\rangle\vert: p\in\mathrm{Pref}\}.
\]
Given an upper bound $M>0$,
the integer $L>0$ must be taken so large that
\begin{align*}
2\vert \alpha_{v}\vert^{L-n+3}\frac{M_{v}}{1-\vert\alpha_{v}\vert}
\Big(
\frac{M}{1-\alpha^{-1}}\Big)^{\frac{1}{n-1}}+\vert\alpha_{v}\vert
<1\quad &\mathrm{for}\quad v\in\mathfrak{M}_{\infty}\backslash\{ v_{1}\},\\
2\Big(\frac{1}{q_{i}}\Big)^{v_{i}(L-n+3)}+\Big(\frac{1}{q_{i}}\Big)^{v_{i}}
<1\quad &\mathrm{for}\quad v=v_{\mathfrak{p}_{i}}\quad (1\leq i\leq \kappa).
\end{align*}
\end{defn}

\begin{defn}Given an integer $d>0$ and an upper bound $M>0$, set
\[
R_{v}=
\begin{cases}
\vert\alpha_{v}
\vert^{d+n-2}\Big(\frac{1-\alpha^{-1}}{M}\Big)^{\frac{1}{d-1}} 
&\mathrm{for\ }v\in \mathfrak{M}_{\infty}\backslash\{ v_{1}\},\\
q_{i}^{-(d+n-2)\nu_{i}}& \mathrm{for\ }v=v_{\mathfrak{p}_{i}}
\in\mathfrak{M}\backslash\mathfrak{M}_{\infty}.
\end{cases}
\]
An open set of $K_{\sigma}$ 
\[
O_{d}(X)=\prod_{v\in\mathfrak{M}_{\infty}\backslash\{ v_{1}\}}
\{ Y_{v}\in K_{v}:\vert X_{v}-Y_{v}\vert<
R_{v}\}\times
\prod_{v\in\mathfrak{M}\backslash\mathfrak{M}_{\infty}}\{
Y_{v}\in K_{v}:\vert X_{v}-Y_{v}\vert_{v}<R_{v}\}
\]
is called the {\it $d$-neighborhood} of $X=(X_{v})_{v\in\mathfrak{M}}\in K_{\sigma}$.
\end{defn}

\section{Polynomial-like Behavior}\label{poly_like}

When $(\omega_{1},\omega_{2})\in\Sigma_{A}\times\Sigma_{A}$ visits $\mathcal{C}\times \mathcal{C}$ simultaneously 
at $d$, i.e.,
\[
(T\times T)^{d}(\omega_{1},\omega_{2})\in \mathcal{C}\times \mathcal{C}, 
\]
the {\it polynomial part of $\Psi(\omega_{1})-\Psi(\omega_{2})-\gamma$ at $d$}
is defined to be
\[
F(\alpha)=\sum_{k=0}^{d-1}\alpha^{k}\langle f(p_{k}),\vector{v}\rangle-\sum_{k=0}^{d-1}
\alpha^{k}\langle f(q_{k}),\vector{v}\rangle-\langle\vector{w},\vector{v}\rangle
\]
where $\omega_{1}=(a_{k+1}:p_{k})_{k\geq 0},\omega_{2}=(b_{k+1}:q_{k})_{k\geq 0}$ and
$\gamma=\Phi^{\prime}(\langle \vector{w},\vector{v}\rangle)$.
If $d=0$, then we set $F(\alpha)=-\langle\vector{w},\vector{v}\rangle$.
Similarly defined are the polynomial parts of $\Psi(\omega_{1})$ and $\Psi(
\omega_{2})+\gamma$ at
$d$ in an obvious manner.

If $\vector{a}={}^{t}(1,\alpha,\ldots,\alpha^{n-1})$, then $\vector{v}$ can be written in the 
form $B\vector{a}$ for
some $n\times n$ matrix $B$ with integer coefficients. Rewriting summands in $F(\alpha)$ as
\[
\langle f(q_{k})-f(q_{k}),\vector{v}\rangle=\langle{}^{t}B(f(q_{k})-f(q_{k})),\vector{a}\rangle,
\]
it is easy to see that an upper bound  $M$ for the coefficients of $F(\alpha)$
can be taken independently of $\omega_{1},\omega_{2}$ and $d$. Throughout the rest of this paper,
$M$ will denote this upper bound. 

\begin{thm}\label{either}
If $(\omega_{1},\omega_{2})$ visits  $\mathcal{C}\times \mathcal{C}$ simultaneously at $d$,
then either 
the polynomial part of $\Psi(\omega_{1})-\Psi(\omega_{2})-\gamma$ at $d$
vanishes  or $\Psi(\omega_{2})+\gamma\not\in O_{d+1}(\Psi(\omega_{1}))$.
\end{thm}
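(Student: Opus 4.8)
The plan is to prove the dichotomy by contradiction: I assume that the polynomial part $F(\alpha)$ at $d$ is nonzero and that $\Psi(\omega_{2})+\gamma\in O_{d+1}(\Psi(\omega_{1}))$, and I derive a violation of the adelic Garsia inequality (Lemma~\ref{adelicgarsia}). For $v\in\mathfrak{M}$ put $D_{v}=(\Psi(\omega_{1})-\Psi(\omega_{2})-\gamma)_{v}$. Writing out $\Psi$ and $\gamma=\Phi^{\prime}(\langle\vector{w},\vector{v}\rangle)$ coordinatewise and cancelling the two copies of $\langle\vector{w},\vector{v}(\alpha_{v})\rangle$, one gets $D_{v}-F(\alpha_{v})=\sum_{i\geq d}\alpha_{v}^{i}\langle f(p_{i})-f(q_{i}),\vector{v}(\alpha_{v})\rangle$, where $F(\alpha_{v})$ is $F(\alpha)$ transported to $K_{v}$ by the relevant embedding. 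Since $(T\times T)^{d}(\omega_{1},\omega_{2})\in\mathcal{C}\times\mathcal{C}$ and $\mathcal{C}$ consists of $L+1$ consecutive symbols $(u_{0}:\emptyset)$, we have $p_{i}=q_{i}=\emptyset$, hence $f(p_{i})=f(q_{i})=\mathbf{0}$, for $d\leq i\leq d+L$; so this tail in fact begins at the index $d+L+1$.

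Next I bound the tail prime by prime. For a contracting infinite prime $v\in\mathfrak{M}_{\infty}\setminus\{v_{1}\}$ (where $\vert\alpha_{v}\vert<1$) the geometric series gives $\vert D_{v}-F(\alpha_{v})\vert\leq 2M_{v}\vert\alpha_{v}\vert^{d+L+1}/(1-\vert\alpha_{v}\vert)$, while for a finite prime $v=v_{\mathfrak{p}_{i}}$ the ultrametric inequality together with $v_{\mathfrak{p}_{i}}(\alpha)=\nu_{i}$ gives $\vert D_{v}-F(\alpha_{v})\vert_{v}\leq q_{i}^{-(d+L+1)\nu_{i}}$. Writing $R_{v}$ and $R_{v}^{\prime}$ for the radii defining $O_{d}$ and $O_{d+1}$, the assumption $\Psi(\omega_{2})+\gamma\in O_{d+1}(\Psi(\omega_{1}))$ means $\vert D_{v}\vert_{v}<R_{v}^{\prime}$ at every $v$, and adding the tail bound produces an estimate for $\vert F(\alpha_{v})\vert_{v}$. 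The cylinder length $L$ is calibrated for exactly this step: the two displayed inequalities in the definition of $\mathcal{C}$ rearrange precisely into $R_{v}^{\prime}+(\text{tail bound})<R_{v}$ at each prime (the surplus factor $\vert\alpha_{v}\vert$, resp. $q_{i}^{-\nu_{i}}$, accounting for the passage from the exponent $d+n-1$ in $R_{v}^{\prime}$ to $d+n-2$ in $R_{v}$). Consequently $\vert F(\alpha_{v})\vert_{v}<R_{v}$ for every $v\in\mathfrak{M}$, where for a complex infinite prime $\vert\cdot\vert_{v}$ is the square of the modulus.

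Taking the product over $\mathfrak{M}$, the product of the $O_{d}$-radii telescopes by the product formula~(\ref{product_alpha}): the archimedean valuations (the complex ones counted with weight two) contribute $n-1$ factors $((1-\alpha^{-1})/M)^{1/(n-1)}$ whose product is $(1-\alpha^{-1})/M$, the factors $\prod_{i}q_{i}^{\nu_{i}}$ from the infinite part cancel those from the finite part, and the surviving powers of the moduli assemble into $\alpha^{-(d+n-2)}$. Hence $\prod_{v\in\mathfrak{M}}\vert F(\alpha_{v})\vert_{v}<(1-\alpha^{-1})/(M\alpha^{d+n-2})$. But the polynomial part at $d$ has integer coefficients bounded by $M$ and degree at most $d+n-2$, so Lemma~\ref{adelicgarsia} forces the reverse inequality $\prod_{v\in\mathfrak{M}}\vert F(\alpha_{v})\vert_{v}\geq(1-\alpha^{-1})/(M\alpha^{d+n-2})$ as soon as $F(\alpha)\neq 0$. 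This contradiction shows $F(\alpha)=0$, i.e. either the polynomial part vanishes or $\Psi(\omega_{2})+\gamma\notin O_{d+1}(\Psi(\omega_{1}))$.

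The main obstacle is the calibration in the second paragraph: one must verify that the inequalities defining $\mathcal{C}$ convert ``membership in $O_{d+1}$ plus tail'' into ``membership in $O_{d}$'' uniformly in $d$ and in the pair $(\omega_{1},\omega_{2})$, and---more delicately---that the product of the $O_{d}$-radii matches the Garsia bound \emph{exactly} rather than merely up to a multiplicative constant, since any slack would be overwhelmed by the $\alpha^{-(d+n-2)}$ decay. This forces careful bookkeeping of the degree (at most $d+n-2$) and of the coefficient bound $M$ (independent of $d$), together with a consistent distinction between the modulus used to define $O_{d}$ and the normalized valuation $\vert\cdot\vert_{v}$ entering the product formula. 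The degenerate case $d=0$, where $F(\alpha)=-\langle\vector{w},\vector{v}\rangle$ has degree $n-1$, is handled identically.
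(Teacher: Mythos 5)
Your proof is correct and follows essentially the same route as the paper's: a contradiction via the adelic Garsia lemma, using the $L+1$ empty prefixes forced by $\mathcal{C}\times\mathcal{C}$ to push the tails to index $d+L+1$, per-prime estimates calibrated by the defining inequalities for $L$, and the product formula~(\ref{product_alpha}) to assemble the bound $(1-\alpha^{-1})/(\alpha^{d+n-2}M)$. The only cosmetic difference is that you bound the single difference $D_{v}-F(\alpha_{v})$ directly, where the paper splits $F=F^{X}-F^{Y}$ and applies a three-term triangle inequality $\vert F_{v}^{X}-F_{v}^{Y}\vert_{v}\leq\vert F_{v}^{X}-X_{v}\vert_{v}+\vert X_{v}-Y_{v}\vert_{v}+\vert Y_{v}-F_{v}^{Y}\vert_{v}$ --- the resulting estimates are identical.
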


\begin{proof}
Set $\omega_{1}=(a_{k+1}:p_{k})_{k\geq 0},\omega_{2}=(b_{k+1}:q_{k})_{k\geq 0}$ and
$\gamma=\Phi^{\prime}(\langle \vector{w},\vector{v}\rangle)$.
Suppose that
the polynomial part of $\Psi(\omega_{1})-\Psi(\omega_{2})-\gamma$ at $d$
does not vanish. Write $X=\Psi(\omega_{1})$ and $Y=\Psi(\omega_{2})+\gamma$, and assume that
$Y\in O_{d+1}(X)$. Then
\begin{align}
&\vert X_{v}-Y_{v}\vert <\vert\alpha_{v}\vert^{d+1+n-2}\Big(
\frac{1-\alpha^{-1}}{M}\Big)^{\frac{1}{n-1}}\quad & (v\in\mathfrak{M}_{\infty}\backslash\{ v_{1}\})
\label{infty},\\
&\vert X_{v}-Y_{v}\vert_{v} \leq\frac{1}{q_{i}^{\nu_{i}(d+1+n-2)}}\quad &(
v=v_{\mathfrak{p}_{i}}
\in\mathfrak{M}\backslash\mathfrak{M}_{\infty})\label{finite}.
\end{align}
Let $F^{X}(\alpha)$ and $F^{Y}(\alpha)$
be the polynomial parts of $X$ and $Y$ at $d$ respectively.
Observe that $F(\alpha)=F^{X}(\alpha)-F^{Y}(\alpha)$
is a polynomial of degree $d+n-2$ with integer coefficients
having the upper bound $M$.
If $\Phi(F^{X}(\alpha))=(F^{X}_{v})_{v}$, then
\[
X_{v}-F_{v}^{X}=\alpha_{v}^{d+L+1}\sum_{k=d+L+1}^{\infty}\alpha_{v}^{k-d-L-1}\langle f(p_{k}),
\vector{v}(\alpha_{v})\rangle.
\]
For $v\in\mathfrak{M}_{\infty}\backslash\{ v_{1}\}$, we have
\begin{equation}\label{infty2}
\vert X_{v}-F_{v}^{X}\vert\leq\vert \alpha_{v}\vert^{d+L+1}\frac{M_{v}}{1-\vert\alpha_{v}\vert}.
\end{equation}
For $v\in\mathfrak{M}\backslash\mathfrak{M}_{\infty}$,
observe that $\xi=\sum_{k=d+L+1}^{\infty}\alpha_{v}^{k-d-L-1}
\langle f(p_{k}),\vector{v}(\alpha_{v})\rangle$ is the limit of
a Cauchy sequence of $\mathfrak{O}_{K}$ in $K_{v}$, and hence $\vert\xi\vert_{v}\leq 1$.
So
\begin{equation}\label{finite2}
\vert X_{v}-F_{v}^{X}\vert_{v}\leq\Big(\frac{1}{q_{i}}\Big)^{v_{\mathfrak{p}_{i}}
(\alpha_{v}^{d+L+1}\xi)}\leq
\Big(\frac{1}{q_{i}}\Big)^{v_{i}(d+L+1)}.
\end{equation}
Similar estimates for $Y-F^{Y}(\alpha)$ hold. 

The valuation inequality for $v=v_{\mathfrak{p}_{i}}\in
\mathfrak{M}\backslash\mathfrak{M}_{\infty}$
\[
\vert F_{v}^{X}-F_{v}^{Y}\vert_{v}\leq\vert F_{v}^{X}-X_{v}\vert_{v}
+\vert X_{v}-Y_{v}\vert_{v}+\vert Y_{v}-F_{v}^{Y}\vert_{v}
\]
together with (\ref{finite}) and (\ref{finite2}) implies
\[
\vert F_{v}^{X}-F_{v}^{Y}\vert_{v}\leq\Big(\frac{1}{q_{i}}\Big)^{v_{i}(d+n-2)}
\Big[2\Big(\frac{1}{q_{i}}\Big)^{v_{i}(L-n+3)}+\Big(\frac{1}{q_{i}}\Big)^{v_{i}}\Big].
\]
The similar inequality for absolute values together with (\ref{infty}) and (\ref{infty2}) yields
\[
\vert F_{v}^{X}-F_{v}^{Y}\vert\leq 
\vert\alpha_{v}\vert^{d+n-2}\Big(
\frac{1-\alpha^{-1}}{M}\Big)^{\frac{1}{n-1}}
\Big[2\vert \alpha_{v}\vert^{L-n+3}\frac{M_{v}}{1-\vert\alpha_{v}\vert}
\Big(
\frac{M}{1-\alpha^{-1}}\Big)^{\frac{1}{n-1}}+\vert\alpha_{v}\vert\Big].
\]
Using (\ref{product_alpha}) and the definition of $L$, we obtain 
\[
\prod_{v\in\mathfrak{M}}\vert F_{v}^{X}-F_{v}^{Y}\vert_{v}
< \frac{1-\alpha^{-1}}{\alpha^{d+n-2} M},
\]
which contradicts Lemma~\ref{adelicgarsia}. This completes the proof.
\end{proof}

\section{Ergodic part}\label{ergodic}

Define a measurable function $\tau_{2}:\Sigma_{A}\times\Sigma_{A}\to[0,\infty]$ by
\[
\tau_{2}(\omega_{1},\omega_{2})=\inf\ \{ k\geq 0: (T\times T)^{k}(
\omega_{1},\omega_{2})\in \mathcal{C}\times\mathcal{C}\}.
\]
If there is no such $k$, put $\tau_{2}(\omega_{1},\omega_{2})=\infty$.
Notice that our concern is the {\it first entry time} to $\mathcal{C}\times\mathcal{C}$
rather than the first return time, which is defined only in $\mathcal{C}\times\mathcal{C}$.
We write 
\[
\{ \tau_{2}=k\}:=\{ (\omega_{1},\omega_{2}): \tau_{2}(\omega_{1},\omega_{2})=k\}.
\] 
The $\{ \tau_{2}=k\}$ are pairwise disjoint and
the measurability of $\tau_{2}$ follows from
\[
\{ \tau_{2}=0\}=\mathcal{C}\times\mathcal{C},\quad
\{ \tau_{2}=k\}=(T\times T)^{-k}\mathcal{C}\times\mathcal{C}-\bigcup_{i=0}^{k-1}\{ \tau_{2}=i\}
\quad (k\geq 1)
\]
and $\{ \tau_{2}=\infty\}=\Sigma_{A}\times\Sigma_{A}-\bigcup_{k-0}^{\infty}\{ \tau_{2}=k\}$.
Since almost every $(\omega_{1},\omega_{2})\in\Sigma_{A}\times\Sigma_{A}$ visits infinitely often
$\mathcal{C}\times\mathcal{C}$, it follows that $m\times m(\{ \tau_{2}=\infty\})=0$. Hence
$\Sigma_{A}\times\Sigma_{A}$
admits a measurable partition
\begin{equation}\label{ergodic_partition}
\Sigma_{A}\times\Sigma_{A}=\bigsqcup_{k\geq 0}\{ (\omega_{1},\omega_{2}): \tau_{2}(\omega_{1},\omega_{2}
)=k\}\quad(\mathrm{up\ to\ sets\ of\
 measure}\ 0).
\end{equation}

When a property $P$
holds almost everywhere in $\Delta_{\epsilon}$,
$m$-almost every fiber of $\Delta_{\epsilon}$ 
shares the same property. More precisely, let $\mathcal{N}(P)$ be
the null set with respect to $m\times m$ for which $P$ does not hold.
Then
\[
m\times m(\Delta_{\epsilon})=m\times m\Big(\Delta_{\epsilon}\backslash\mathcal{N}(P)
).
\]
Writing $\tilde{B}(\omega_{1})=(\Psi+\gamma)^{-1}B(\Psi(\omega_{1}),\epsilon)$ and
applying Fubini's theorem to both sides, we obtain
\[
\int_{B}m(\tilde{B}(\omega_{1})
)dm(\omega_{1})=\int_{B}m\Big(
\{ \omega_{2}\in \tilde{B}(\omega_{1}): (\omega_{1},\omega_{2})\not\in \mathcal{N}(P)
\}\Big)dm(\omega_{1}).
\]
As the integrand on the righthand side is less than the one on the left, we conclude
that for $m$-almost every $\omega_{1}\in B$,
\[
m((\Psi+\gamma)^{-1}B(\Psi(\omega_{1}),\epsilon))=
m\Big(\{ \omega_{2}\in (\Psi+\gamma)^{-1}B(\Psi(\omega_{1}),\epsilon): 
(\omega_{1},\omega_{2})\not\in \mathcal{N}(P)\}\Big).
\]

\begin{lem}\label{partition}
The fiber at almost every $\omega_{1}\in B$
\[
\Delta_{\epsilon}(\omega_{1}):=
\{ \omega_{1}\}\times (\Psi+\gamma)^{-1}B(\Psi(\omega_{1}),\epsilon)
\]
has a decomposition
\[
\Delta_{\epsilon}(\omega_{1})
=\{ \omega_{1}\}\times \bigsqcup_{k\geq 0}\{ \omega_{2}\in (\Psi+\gamma)^{-1}B(\Psi(\omega_{1}),
\epsilon):
 \tau_{2}(\omega_{1},\omega_{2})=k\}
\]
up to sets of measure $0$.
\end{lem}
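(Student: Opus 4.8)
The plan is to descend from the global partition (\ref{ergodic_partition}) to the fibers by exactly the disintegration argument set up in the paragraph preceding the statement. First I would record the three ingredients already available: the sets $\{\tau_{2}=k\}$ for $k\geq 0$ are pairwise disjoint, the function $\tau_{2}$ is jointly $(m\times m)$-measurable, and $m\times m(\{\tau_{2}=\infty\})=0$ because almost every $(\omega_{1},\omega_{2})$ visits $\mathcal{C}\times\mathcal{C}$ infinitely often under $T\times T$. Hence the property $P$ that $\tau_{2}(\omega_{1},\omega_{2})<\infty$ holds $(m\times m)$-almost everywhere, and its exceptional set is precisely $\mathcal{N}(P)=\{\tau_{2}=\infty\}$, a genuine null set.

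Next I would feed this choice of $P$ into the Fubini identity already displayed before the lemma. Since $\Delta_{\epsilon}$ is measurable (Lemma~\ref{positive}) and $\mathcal{N}(P)$ is $(m\times m)$-null, the equality $m\times m(\Delta_{\epsilon})=m\times m(\Delta_{\epsilon}\setminus\mathcal{N}(P))$ holds, and slicing by Fubini's theorem gives, for $m$-almost every $\omega_{1}\in B$,
\[
m\big((\Psi+\gamma)^{-1}B(\Psi(\omega_{1}),\epsilon)\big)
=m\big(\{\omega_{2}\in(\Psi+\gamma)^{-1}B(\Psi(\omega_{1}),\epsilon):\tau_{2}(\omega_{1},\omega_{2})<\infty\}\big).
\]
In words, for such $\omega_{1}$ the slice $\{\omega_{2}:\tau_{2}(\omega_{1},\omega_{2})=\infty\}$ meets the fiber $(\Psi+\gamma)^{-1}B(\Psi(\omega_{1}),\epsilon)$ in a set of $m$-measure $0$.

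Finally I would assemble the decomposition. For each admissible $\omega_{1}$ the fiber is the disjoint union of the slices $\{\omega_{2}:\tau_{2}(\omega_{1},\omega_{2})=k\}\cap(\Psi+\gamma)^{-1}B(\Psi(\omega_{1}),\epsilon)$ over $k\geq 0$, together with the slice at $\tau_{2}=\infty$; discarding the latter, which we have just shown to be $m$-null, yields the asserted decomposition up to sets of measure $0$. Joint measurability of $\tau_{2}$ guarantees that these slices are $m$-measurable for almost every $\omega_{1}$, and their pairwise disjointness is inherited directly from that of the $\{\tau_{2}=k\}$.

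The step I expect to be the main obstacle is the purely measure-theoretic passage from the single global null set $\{\tau_{2}=\infty\}$ to the fiberwise statement for $m$-almost every $\omega_{1}\in B$: one must verify that the $\omega_{1}$ for which the slice fails to be null (or fails to be measurable) themselves form an $m$-null subset of $B$. This is exactly what Fubini's theorem provides, but it is the point where care is needed, since the conclusion must hold for almost every fiber rather than for a single distinguished one. Once this is secured, the covering and disjointness follow immediately from the corresponding properties of the global partition (\ref{ergodic_partition}).
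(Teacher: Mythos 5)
Your proposal is correct and takes essentially the same route as the paper: the paper's proof likewise invokes the global partition~(\ref{ergodic_partition}) (equivalently $m\times m(\{\tau_{2}=\infty\})=0$) together with the Fubini disintegration argument prepared in the paragraph before the lemma, which is exactly your choice of the property $P$ as $\tau_{2}<\infty$ with $\mathcal{N}(P)=\{\tau_{2}=\infty\}$. Your write-up is just a more explicit version of the paper's two-line proof, correctly isolating the almost-every-fiber passage as the point where Fubini's theorem does the work.
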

\begin{proof}
Since $m\times m$ is ergodic, it follows by (\ref{ergodic_partition}) that
\[
m\times m(\Delta_{\epsilon})=m\times m\Big(\Delta_{\epsilon}\cap
\bigsqcup_{k\geq 0}\{ (\omega_{1},\omega_{2})\in\Sigma_{A}\times\Sigma_{A}
: \tau_{2}(\omega_{1},\omega_{2}
)=k\}\Big).
\]
Thus for $m$-almost every $\omega_{1}\in B$
\[
m((\Psi+\gamma)^{-1}B(\Psi(\omega_{1}),\epsilon))=
m\Big(\bigsqcup_{k\geq 0}\{ \omega_{2}\in (\Psi+\gamma)^{-1}B(\Psi(\omega_{1}),\epsilon): 
\tau_{2}(\omega_{1},\omega_{2})=k\}\Big).
\]
\end{proof}

\section{First Entry To $\mathcal{C}$ Along A Time Series}

Let 
\[
\mathcal{C}_{0}=(\Sigma_{A}\backslash\mathcal{C})\cap T^{-1}(\Sigma_{A}\backslash\mathcal{C})\cap
\cdots\cap T^{-(N+1)}(\Sigma_{A}\backslash\mathcal{C})
\]
be the set of all those points of $\Sigma_{A}$ which 
do not enter $\mathcal{C}$ by $T^{i}$ for $0\leq i\leq N+1$.

\begin{rem}
Clearly
$m(\mathcal{C}_{0})>0$ beccause
$\langle a_{0}\cdots a_{L}\rangle\subset\mathcal{C}_{0}$
if $a_{L}\neq(u_{0}:\emptyset)$.
\end{rem}

From this point forward, fix such  a fiber $\Delta_{\epsilon}(x)$ at $x\in B$
that the decomposition of Lemma~\ref{partition} holds for
$\Delta_{\epsilon}(x)$ and, moreover, that 
\begin{equation}\label{C_0}
\mathrm{a.e.}\ (x,\omega_{2})\in\Delta_{\epsilon}(x)\
\mathrm{visits}\ \mathcal{C}\times\mathcal{C}\  and\
\mathcal{C}_{0}\times\Sigma_{A}\
\mathrm{infinitely\ often.}
\end{equation}

\begin{lem}\label{N+}
There exists  $N_{+}>0$ so that 
$O_{N_{0}}(\Psi(x))\subset B(\Psi(x),\epsilon)$ for all $N_{0}\geq N_{+}$
and that 
\begin{equation}\label{inside}
\Psi(\langle \omega_{0}\cdots \omega_{N_{0}-(N+1)} \rangle)+\gamma
\subset B(\Psi(x),\epsilon)
\end{equation}
whenever $\omega=(\omega_{i})_{i\geq 0}\in(\Psi+\gamma)^{-1}O_{N_{0}}(\Psi(x))$.
\end{lem}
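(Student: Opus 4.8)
The plan is to treat both assertions as convergence statements, exploiting that the neighborhood radii $R_v$ and the ``tail'' of the power series defining $\Psi$ both vanish geometrically as $N_0\to\infty$, uniformly over the finite set $\mathfrak{M}$. The proof is essentially a quantitative refinement of the continuity argument already used for Lemma~\ref{continuous}.

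First, for the containment $O_{N_0}(\Psi(x))\subset B(\Psi(x),\epsilon)$, I would observe that each radius $R_v$ tends to $0$ as $d=N_0\to\infty$: for $v\in\mathfrak{M}_{\infty}\backslash\{v_1\}$ one has $\vert\alpha_v\vert<1$ (these are the contracted conjugates of the Pisot number $\alpha$), so $\vert\alpha_v\vert^{N_0+n-2}\to0$ while the factor $\big(\tfrac{1-\alpha^{-1}}{M}\big)^{1/(N_0-1)}\to1$; and for $v=v_{\mathfrak{p}_i}$ one has $q_i>1$, $\nu_i\geq1$, so $q_i^{-(N_0+n-2)\nu_i}\to0$. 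Since $\mathfrak{M}$ is finite and $d_K$ is the maximum of the coordinate distances, there is $N_+$ with $R_v<\epsilon$ for every $v\in\mathfrak{M}$ once $N_0\geq N_+$, whence $O_{N_0}(\Psi(x))\subset B(\Psi(x),\epsilon)$.

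For the second assertion, take $\omega=(a_{k+1}:p_k)_{k\geq0}\in(\Psi+\gamma)^{-1}O_{N_0}(\Psi(x))$ and let $\omega'=(a'_{k+1}:p'_k)_{k\geq0}$ be an arbitrary point of the cylinder $\langle\omega_0\cdots\omega_{N_0-(N+1)}\rangle$, so that $p'_k=p_k$ for $0\leq k\leq N_0-(N+1)$. The key point is that the power series for $\Psi(\omega')$ and $\Psi(\omega)$ then agree through index $N_0-N-1$ and differ only from index $N_0-N$ onward, and I would estimate this tail exactly as in Lemma~\ref{continuous}. For $v\in\mathfrak{M}_{\infty}\backslash\{v_1\}$ the coefficients are bounded by $2M_v$ in modulus, giving
\[
\vert\Psi(\omega')_v-\Psi(\omega)_v\vert\leq 2M_v\frac{\vert\alpha_v\vert^{N_0-N}}{1-\vert\alpha_v\vert},
\]
while for $v=v_{\mathfrak{p}_i}\in\mathfrak{M}\backslash\mathfrak{M}_{\infty}$ the tail factors as $\alpha_v^{N_0-N}\xi$ with $\vert\xi\vert_v\leq1$, so that $\vert\Psi(\omega')_v-\Psi(\omega)_v\vert_v\leq q_i^{-\nu_i(N_0-N)}$.

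Finally, since the translation $\gamma$ is common to $\omega$ and $\omega'$, I would combine these estimates with the hypothesis $\Psi(\omega)+\gamma\in O_{N_0}(\Psi(x))$ through the triangle inequality at the infinite places and the ultrametric inequality at the finite ones. For each $v$ this bounds $\vert(\Psi(\omega')_v+\gamma_v)-\Psi(x)_v\vert$ (resp.\ $\vert\cdot\vert_v$) by the sum (resp.\ maximum) of the tail estimate above and $R_v$, and both contributions vanish as $N_0\to\infty$. Enlarging $N_+$ to absorb the finitely many coordinate conditions then forces $\Psi(\omega')+\gamma\in B(\Psi(x),\epsilon)$ for every $\omega'$ in the cylinder, which is \eqref{inside}. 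There is no serious obstacle here; the only care needed is the bookkeeping that fixing coordinates $0$ through $N_0-(N+1)$ frees the tail precisely from index $N_0-N$, and the verification that the tail decay rate $\vert\alpha_v\vert^{N_0-N}$ is commensurate with the radius $R_v\sim\vert\alpha_v\vert^{N_0+n-2}$, both controlled uniformly over the finite set $\mathfrak{M}$.
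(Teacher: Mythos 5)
Your proposal is correct and follows essentially the same route as the paper: the paper also handles \eqref{inside} by inserting $Y=\Psi(\omega)+\gamma\in O_{N_{0}}(\Psi(x))$ as an intermediate point in the triangle inequality $d_{K}(\Psi(x),Z)\leq d_{K}(\Psi(x),Y)+d_{K}(Y,Z)$, with the first term controlled by the radii $R_{v}\to 0$ and the second by the geometric tail estimate of Lemma~\ref{continuous}, since $Y$ and $Z$ come from points agreeing on coordinates $0$ through $N_{0}-(N+1)$. You have merely made explicit the quantitative bookkeeping (the tail bounds $2M_{v}\vert\alpha_{v}\vert^{N_{0}-N}/(1-\vert\alpha_{v}\vert)$ and $q_{i}^{-\nu_{i}(N_{0}-N)}$, uniformity over the finite set $\mathfrak{M}$) that the paper dismisses with ``clearly'' and ``follows immediately.''
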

\begin{proof}
If $N_{+}$ is sufficiently large, clearly
$O_{N_{0}}(\Psi(x))\subset B(\Psi(x),\epsilon)$ for $N_{0}\geq N_{+}$.
For $\omega\in(\Psi+\gamma)^{-1}O_{N_{0}}(\Psi(x))$, set
$Y=\Psi(\omega)+\gamma$. Then $Y\in O_{N_{0}}(\Psi(x))$.
By the triangle inequality
\[
d_{K}(\Psi(x),Z)\leq d_{K}(\Psi(x),Y)+d_{K}(Y,Z)\quad 
\mathrm{for}\ Z\in \Psi(\langle \omega_{0}\cdots \omega_{N_{0}-(N+1)} \rangle)+\gamma,
\]
(\ref{inside}) follows immediately.
\end{proof}

Now select $N_{0}$ so that $N_{0}-(N+1)\geq N_{+}$ and that
$T^{N_{0}-(N+1)}x\in \mathcal{C}_{0}$ as in Lemma~\ref{N+}.
Then take $y=(y_{i})_{i\geq 0}\in(\Psi+\gamma)^{-1}O_{N_{0}}(\Psi(x))$
so that $(x,y)$ satisfies (\ref{C_0}).

Since $A^{N+1}>0$, one can choose $z=(z_{i})_{i\geq 0}\in\langle y_{0}\cdots y_{N_{0}-(N+1)}\rangle$
with $z_{N_{0}}=(u_{0}:\emptyset)$, so that
\[
\Psi(\langle z_{0}\cdots z_{N_{0}} \rangle)+\gamma
\subset \Psi(\langle y_{0}\cdots y_{N_{0}-(N+1)} \rangle)+\gamma\subset
B(\Psi(x),\epsilon)
\]
by (\ref{inside}). It follows that
\begin{equation}\label{fibre_inclusion}
\{ x\}\times\langle z_{0}\cdots z_{N_{0}} \rangle\subset\Delta_{\epsilon}(x). 
\end{equation}
Observe that by construction
\begin{equation}\label{x_construction}
T^{i}x\not\in\mathcal{C}\quad \mathrm{for}\ N_{0}-(N+1)\leq i\leq N_{0}.
\end{equation}

Let $\{ N_{k}\}_{k=1}^{\infty}$ be the set of the entry times of $x$ 
to $\mathcal{C}$ after $N_{0}$:
\[
T^{N_{k}}x\in\mathcal{C},\quad N_{k}>N_{0}\quad(k\geq 1).
\]
We say that {\it
the first entry of $\omega$
to $\mathcal{C}$ along the time series
$\{ N_{0},N_{1},\ldots \}$ is $N_{k}$} if
\begin{equation}\label{timeseries}
\begin{split}
T^{N_{0}}\omega\in\mathcal{C}\ &\qquad\mathrm{for}\ k=0, \\
T^{N_{i}}\omega\not\in\mathcal{C}\ (0\leq i\leq k-1),\ T^{N_{k}}\omega\in\mathcal{C}
\ &\qquad\mathrm{for}\ k\geq 1.
\end{split}
\end{equation}
Set
\[ 
\begin{split}
s_{j}=& m\Big(\{\omega\in \langle z_{0}\cdots z_{N_{0}} \rangle:
T^{N_{0}}\omega\in\mathcal{C}\}\Big) \\
&+
\sum_{k=1}^{j}m\Big(\{\omega\in \langle z_{0}\cdots z_{N_{0}} \rangle:
T^{N_{i}}\omega\not\in\mathcal{C}\ (0\leq i\leq k-1),\ T^{N_{k}}\omega\in\mathcal{C}\}\Big).
\end{split}
\]
For $k\geq 0$, let $b(N_{k})$ be the number of distinct subcylinders  of 
$\langle z_{0}\cdots z_{N_{0}} \rangle$  of the form
\[
\langle z_{0}\cdots z_{N_{0}}\omega_{N_{0}+1}\cdots\omega_{N_{k}-1}  \underbrace{I\cdots I
}_{L+1\ \text{terms}} 
\rangle
\]
whose elements  satisfy (\ref{timeseries}) with $I=(u_{0}:\emptyset)$. 
In case of $k=0$, the word $\omega_{N_{0}+1}\cdots\omega_{N_{k}-1}I$ is void and
$b(N_{0})=1$ by construction.
Observe that such subcylinders  give the same measure
\[ 
m(\langle z_{0}\cdots z_{N_{0}}\omega_{N_{0}+1}\cdots\omega_{N_{k}-1}  \underbrace{I\cdots I
}_{L+1\ \text{terms}} 
\rangle)
=
\frac{1}{\alpha^{N_{k}-N_{0}+L}}
m(\langle
z_{0}\cdots z_{N_{0}} \rangle)
\]
irrespective of admissible paths $\omega_{N_{0}+1}\cdots\omega_{N_{k}-1}$.
So
\[
s_{j}
=b(N_{0})\cdot
\frac{1}{\alpha^{L}}m(\langle z_{0}\cdots z_{N_{0}} \rangle)
+\sum_{k=1}^{j}b(N_{k})\cdot
\frac{1}{\alpha^{N_{k}-N_{0}+L}}m(\langle
z_{0}\cdots z_{N_{0}} \rangle).
\]
Notice that $s_{j}$ is a sum of the measures of disjoint subsets of 
$m(\langle z_{0}\cdots z_{N_{0}} \rangle)$, so that it obviously converges as
$j$ goes to infinity.
\begin{proposition}\label{s_infty}
If we write $s_{\infty}=\lim_{j\to\infty}s_{j}$, then
\[
s_{\infty}=m(\langle z_{0}\cdots z_{N_{0}} \rangle).
\]
\end{proposition}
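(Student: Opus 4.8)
The plan is to recognize that $s_\infty$ is precisely the measure of the set of points in $\langle z_0\cdots z_{N_0}\rangle$ that enter $\mathcal{C}$ at \emph{some} moment of the time series $\{N_0,N_1,\dots\}$. Indeed, by the defining condition (\ref{timeseries}) the sets whose measures are added up in $s_j$ are pairwise disjoint, and as $k$ ranges over all of $\mathbb{N}_{\ge 0}$ their union is $\{\omega\in\langle z_0\cdots z_{N_0}\rangle: T^{N_k}\omega\in\mathcal{C}\ \text{for some}\ k\ge 0\}$. Hence $s_\infty=m(\langle z_0\cdots z_{N_0}\rangle)$ is equivalent to showing that the ``never enter'' set
\[
E=\{\omega\in\langle z_0\cdots z_{N_0}\rangle: T^{N_k}\omega\notin\mathcal{C}\ \text{for all}\ k\ge 0\}
\]
is $m$-null. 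Since $m$ is the $(\vector{p},P)$-Markov measure and $z_{N_0}=(u_{0}:\emptyset)=:I$, conditioning on $\langle z_0\cdots z_{N_0}\rangle$ turns $(\omega_{N_0},\omega_{N_0+1},\dots)$ into the chain governed by $P$ started from the state $I$, so everything reduces to a statement about this chain failing to display the block $\underbrace{I\cdots I}_{L+1}$ at each prescribed time $N_k$. Note that the $N_k$ are infinitely many and strictly increasing because $x$ visits $\mathcal{C}$ infinitely often by (\ref{C_0}).

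The heart of the argument will be a uniform lower bound on the conditional probability of entering $\mathcal{C}$ at a prescribed time. Writing $\mathcal{F}_m$ for the $\sigma$-algebra generated by $\omega_0,\dots,\omega_m$ and $A_k=\{\omega_{N_k}=\cdots=\omega_{N_k+L}=I\}$, the Markov property gives $m(A_k\mid\mathcal{F}_{N_k-1})=p_{\omega_{N_k-1}I}\,p_{II}^{\,L}$; the obstacle is that $\omega_{N_k-1}$ may be a state unable to pass to $I$ in one step, so this one-step bound can degenerate to $0$. I would avoid this by looking back $N+1$ steps rather than one. By Lemma~\ref{irraper} we have $A^{N+1}>0$, hence $A^{g}>0$ for every $g\ge N+1$, so from \emph{any} state $J$ and over any gap $g\ge N+1$ the chain reaches $I$ with positive probability. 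Since $P^{g}$ tends to the rank-one stationary matrix as $g\to\infty$ (the system is even strong mixing, \S\ref{pmeasure}), the entries $(P^{g})_{JI}$ exceed $p_{I}/2>0$ for all $g$ beyond some $g_0$, while for the finitely many pairs $(g,J)$ with $N+1\le g<g_0$ they are positive; minimizing yields a constant $\beta_0>0$ with
\[
m\bigl(\{\omega_{M+g}=\cdots=\omega_{M+g+L}=I\}\mid\omega_{M}=J\bigr)=(P^{g})_{JI}\,p_{II}^{\,L}\ \ge\ \beta_0
\]
uniformly in $J$ and in $g\ge N+1$.

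Finally I would extract from the entry times a subsequence $M_0=N_0<M_1<M_2<\cdots$ with $M_{j}-M_{j-1}\ge N+1+L$, so the windows $[M_j,M_j+L]$ are separated by at least $N+1$ steps, and set $A'_j=\{\omega_{M_j}=\cdots=\omega_{M_j+L}=I\}$, noting $E\subset\bigcap_{j\ge 0}(A'_j)^{c}$. Conditioning successively on $\mathcal{F}_{M_{j-1}+L}$ (all of $\langle z_0\cdots z_{N_0}\rangle$ and $A'_0,\dots,A'_{j-1}$ being measurable with respect to it) and applying the uniform bound gives
\[
m\Bigl(\langle z_0\cdots z_{N_0}\rangle\cap\bigcap_{j=0}^{K}(A'_j)^{c}\Bigr)\le(1-\beta_0)\,m\Bigl(\langle z_0\cdots z_{N_0}\rangle\cap\bigcap_{j=0}^{K-1}(A'_j)^{c}\Bigr),
\]
so the left-hand side decays like $(1-\beta_0)^{K}\to 0$; hence $m(E)=0$ and $s_\infty=m(\langle z_0\cdots z_{N_0}\rangle)$. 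The one genuinely delicate point, and the step I would treat most carefully, is the \emph{uniformity} of $\beta_0$ across all admissible gap lengths $g\ge N+1$: positivity for each fixed $g$ is immediate from $A^{N+1}>0$, but the lower bound for unboundedly large gaps is exactly where the mixing of $(\Sigma_A,T,m)$ is indispensable.
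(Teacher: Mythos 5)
Your proof is correct, but it takes a genuinely different route from the paper. The paper's argument is a renewal-type computation: it counts the subcylinders $b(N_{k})$ realizing first entry at $N_{k}$, derives the recursion (\ref{b}) relating $b(N_{k})$ to the path counts $a_{II}^{(N_{k}-N_{0})}$, normalizes by $\alpha^{N_{k}-N_{0}+L}$, and passes to the limit using the explicit spectral asymptotics of $A^{k}$ (Theorem~\ref{asympt}, where the Pisot gap kills all non-leading eigenvalue terms), so that the identity $s_{\infty}=m(\langle z_{0}\cdots z_{N_{0}}\rangle)$ drops out of an exact limit equation. You instead reformulate the claim as the nullity of the ``never enter along $\{N_{k}\}$'' set $E$ and prove it by iterated conditioning: a uniform lower bound $\beta_{0}>0$ on $(P^{g})_{JI}\,p_{II}^{L}$ over all states $J$ and all gaps $g\geq N+1$ (positivity for each fixed $g$ from $A^{N+1}>0$ and irreducibility, hence $A^{g}>0$ for all $g\geq N+1$; uniformity in large $g$ from $P^{g}\to\mathbf{1}\,p$), applied along a subsequence $M_{j}$ of entry times with gaps at least $N+1+L$, giving $m(E)\leq(1-\beta_{0})^{K}m(\langle z_{0}\cdots z_{N_{0}}\rangle)$ for every $K$. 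Your identification of $s_{\infty}$ with $m(\langle z_{0}\cdots z_{N_{0}}\rangle)-m(E)$ is sound, since the summands of $s_{j}$ are pairwise disjoint and exhaust the set of points entering $\mathcal{C}$ at some prescribed time, and your conditioning steps are legitimate because each $A_{j}'$ and the initial cylinder are measurable with respect to the relevant $\mathcal{F}_{M_{j-1}+L}$. What each approach buys: yours is softer and more robust --- it needs only primitivity of $A$ and the qualitative convergence of $P^{g}$ (not the full eigenvalue expansion, nor any Pisot input), works verbatim for any strictly increasing infinite time sequence, and yields exponential decay of the tail; the paper's computation, by contrast, stays entirely within the matrix-asymptotics framework it has already built (Theorem~\ref{asympt}) and extracts the identity as a limit of the renewal equation, which keeps the quantitative structure of the counts $b(N_{k})$ visible, a structure the surrounding sections are organized around. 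The one step you rightly flagged as delicate --- uniformity of $\beta_{0}$ over unbounded gaps --- is handled correctly by splitting into $g\geq g_{0}$ (via convergence to the stationary rank-one matrix) and the finitely many remaining $(g,J)$, so there is no gap there.
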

\begin{proof}
Since $\{N_{k}\}_{k}$ is strictly monotone,
for each $k\geq L+2$ there exists $l=l(k),0\leq l\leq L+1$, 
so that
\[
N_{k}-N_{k-1}\leq L,\ldots,
N_{k}-N_{k-(l-1)}\leq L,N_{k}-N_{k-l}>L.
\]
Recall that we have taken $z_{N_{0}}=I$ and that 
\[
a^{(N_{k}-N_{0})}_{II}=\#\Big\{\ 
z_{N_{0}}\omega_{N_{0}+1}\cdots\omega_{N_{k}-1}  \underbrace{I\cdots I
}_{L+1\ \text{terms}} :
\mathrm{admissible \ paths\ of\ length}\ N_{k}-N_{0}+1
\Big\}.
\]
Then
\begin{equation}\label{b}
b(N_{k})=a^{(N_{k}-N_{0})}_{II}-b(N_{k-1})-\cdots-b(N_{k-(l-1)})-\sum_{i=0}^{k-l}
a_{II}^{(N_{k}-(N_{i}+L))}b(N_{i}).
\end{equation}
Put $E=m(\langle z_{0}\cdots z_{N_{0}}\rangle)$.
Multiplying $E\alpha^{-(N_{k}-N_{0}+L)}$ on both sides of (\ref{b}), we get
\begin{equation}\label{s_infinity}
\begin{split}
\frac{b(N_{k})}{\alpha^{N_{k}-N_{0}+L}}E
=\frac{a^{(N_{k}-N_{0})}_{II}}{\alpha^{N_{k}-N_{0}+L}}E
-&\frac{\alpha^{N_{k-1}}}{\alpha^{N_{k}}}\cdot
\frac{b(N_{k-1})}{\alpha^{N_{k-1}-N_{0}+L}}E \\
-\cdots-\frac{\alpha^{N_{k-(l-1)}}}{\alpha^{N_{k}}}\cdot
\frac{b(N_{k-(l-1)})}{\alpha^{N_{k-(l-1)}-N_{0}+L}}E
-&\sum_{i=0}^{k-l}\frac{
a_{II}^{(N_{k}-N_{i}-L)}}{\alpha^{N_{k}-N_{i}}}
\frac{b(N_{i})}{\alpha^{N_{i}-N_{0}+L}}E.
\end{split}
\end{equation}

For $k\geq L+2$ and $0\leq i\leq k-l$, denote by $E(i,k)$ the modulus of
\[
\sum_{l=2}^{r}\alpha_{l}^{N_{k}-(N_{i}+L)}
\frac{[\vector{u}_{l}]_{I}[\vector{v}_{l}]_{J}}{
\langle[\vector{u}_{l}],[\vector{v}_{l}]\rangle}+\sum_{l=r+1}^{r+s}\alpha_{l}^{N_{k}-(N_{i}+L)}
\frac{[\vector{u}_{l}]_{I}[\vector{v}_{l}]_{J}}{
\langle[\overline{\vector{u}_{l}}],[\vector{v}_{l}]\rangle}
+\overline{\alpha_{l}}^{N_{k}-(N_{i}+L)}
\frac{\overline{[\vector{u}_{l}]_{I}}\overline{[\vector{v}_{l}]_{J}}}{
\langle[\vector{u}_{l}],[\overline{\vector{v}_{l}}]\rangle}.
\]
Then $E(i,k)$ is bounded by a constant $\overline{E}$ irrespective of $k$ and $i$.
Let $\epsilon>0$ and find an integer $K_{0}\geq L+1$ so that $\alpha^{-K_{0}}\overline{E}
<\epsilon s_{\infty}^{-1}$.
Since $N_{k}-N_{k-K_{0}}\geq K_{0}$, it follows by Theorem~\ref{asympt}
that
\[
\Big\vert\frac{
a_{II}^{(N_{k}-(N_{i}+L))}}{\alpha^{N_{k}-N_{i}}}-
\frac{[\vector{u}_{1}]_{I}[
\vector{v}_{1}]_{I}}{\alpha^{L}
\langle[\vector{u}_{1}],[\vector{v}_{1}]\rangle}\Big\vert=\frac{1}{\alpha^{N_{k}-N_{i}}}E(i,k)
<\frac{\epsilon}{s_{\infty}}
\]
for $0\leq i\leq k-K_{0}$ and $k\geq\max\{ L+2,K_{0}\}$. Thus
\[
\Big\vert \sum_{i=0}^{k-K_{0}}\frac{
a_{II}^{(N_{k}-(N_{i}+L))}}{\alpha^{N_{k}-N_{i}}}
\frac{b(N_{i})}{\alpha^{N_{i}-N_{0}+L}}E-\frac{[\vector{u}_{1}]_{I}[
\vector{v}_{1}]_{I}}{\alpha^{L}
\langle[\vector{u}_{1}],[\vector{v}_{1}]\rangle}\sum_{i=0}^{k-K_{0}}
\frac{b(N_{i})}{\alpha^{N_{i}-N_{0}+L}}E\Big\vert<\epsilon.
\]
Applying this to (\ref{s_infinity}), we obtain
\begin{equation}\label{epsilon-error}
\Big\vert \Sigma-
\frac{a^{(N_{k}-N_{0})}_{II}}{\alpha^{N_{k}-N_{0}+L}}E+
\frac{[\vector{u}_{1}]_{I}[
\vector{v}_{1}]_{I}}{\alpha^{L}
\langle[\vector{u}_{1}],[\vector{v}_{1}]\rangle}\sum_{i=0}^{k-K_{0}}
\frac{b(N_{i})}{\alpha^{N_{i}-N_{0}+L}}E\Big\vert<\epsilon.
\end{equation}
where
\[
\Sigma=\sum_{i=0}^{l-1}\frac{\alpha^{N_{k-i}}}{\alpha^{N_{k}}}
\frac{b(N_{k-i})}{\alpha^{N_{k-i}-N_{0}+L}}E+
\sum_{i=k-K_{0}+1}^{k-l}\frac{a_{II}^{(N_{k}-(N_{i}+L))}}{\alpha^{N_{k}-N_{i}}}
\frac{b(N_{i})}{\alpha^{N_{i}-N_{0}+L}}E.
\]
Since each $b(N_{i})\alpha^{-(N_{i}-N_{0}+L)}E$ 
for $k-K_{0}+1\leq i\leq k$ goes to zero
as $k\to\infty$, so does $\Sigma$.
Letting $k\to\infty$ in (\ref{epsilon-error}), it follows that

\[
\vert -E
+s_{\infty}\vert
\leq \frac{\alpha^{L}
\langle[\vector{u}_{1}],[\vector{v}_{1}]\rangle}{[\vector{u}_{1}]_{I}[
\vector{v}_{1}]_{I}}\epsilon.
\]
Since $\epsilon>0$ is arbitrary, this proves the proposition.
\end{proof}

\section{Non-zero Polynomial Parts Distort The Distribution Of $\tau_{2}$}\label{distorsion}

\begin{proposition}\label{distort}
There exists $d\geq 0$ so that
the polynomial part of $\Psi(x)-\Psi(y)-\gamma$
at $d$ vanishes.
\end{proposition}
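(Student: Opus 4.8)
The plan is to turn the statement, via Theorem~\ref{either}, into a question about neighbourhoods, and then to pin down the required coincidence by a measure comparison resting on Proposition~\ref{s_infty}. First I would reformulate the goal: by the contrapositive of Theorem~\ref{either}, at any simultaneous visit time $d$ of $(x,y)$ to $\mathcal{C}\times\mathcal{C}$ the inclusion $\Psi(y)+\gamma\in O_{d+1}(\Psi(x))$ already forces the polynomial part at $d$ to vanish. Since $(x,y)$ satisfies (\ref{C_0}) it visits $\mathcal{C}\times\mathcal{C}$ at infinitely many times $d_1<d_2<\cdots$, so it suffices to show $\Psi(y)+\gamma\in O_{d_j+1}(\Psi(x))$ for at least one $j$. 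I would argue by contradiction, assuming $\Psi(y)+\gamma\notin O_{d+1}(\Psi(x))$ at every visit time $d$, i.e.\ that every polynomial part is non-zero.

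Next I would pass from the single pair $(x,y)$ to a set of positive measure, so that Proposition~\ref{s_infty} can be brought to bear. The key structural remark is that the polynomial part at $N_k$ depends only on the coordinates strictly below $N_k$, hence is \emph{constant} on each first-entry subcylinder $\langle z_0\cdots z_{N_0}\cdots\underbrace{I\cdots I}\rangle$; by (\ref{cylinder_image}) the image of such a subcylinder under $\Psi+\gamma$ is a single subtile, which by the estimates underlying Theorem~\ref{either} either lies inside a neighbourhood $O_{N_k+c}(\Psi(x))$ (when the part vanishes) or is disjoint from $O_{N_k+1}(\Psi(x))$ (when it does not). Proposition~\ref{s_infty} says precisely that these first-entry subtiles tile the image $G=\Psi(\langle z_0\cdots z_{N_0}\rangle)+\gamma$ up to measure zero, the individual masses $b(N_k)\alpha^{-(N_k-N_0+L)}$ summing to one.

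Then I would compare this symbolic bookkeeping with the Haar measure $\mu$ on $K_\sigma$. Within a single subtile $\Psi$ intertwines the Parry measure $m$ with $\mu$ (a consequence of (\ref{cylinder_image}), (\ref{seteqI}), (\ref{haar}) and (\ref{ca})), and from (\ref{product_alpha}) together with the definition of $R_v$ one gets $\mu(O_d(\cdot))\asymp\alpha^{-d}$. The contradiction hypothesis says that \emph{every} first-entry subtile avoids the corresponding shrinking neighbourhood of $\Psi(x)$; computing the mass that the subshift places in $O_d(\Psi(x))$ in the two ways — symbolically through the first-entry times $\{N_k\}$ of $x$, and geometrically through $\mu$ — then yields a deficit near the overlap point that is incompatible with the exact identity $s_\infty=m(\langle z_0\cdots z_{N_0}\rangle)$ of Proposition~\ref{s_infty} and the asymptotics of Theorem~\ref{asympt}.

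The hard part, I expect, will be the reconciliation of the two rates in this last step: matching the symbolic tail $\sum_{N_k\ge d}b(N_k)\alpha^{-(N_k-N_0+L)}$ against $\mu(O_d)\asymp\alpha^{-d}$ with the correct leading constants (the Perron data $[\vector{u}_1]$, $[\vector{v}_1]$ that already govern Proposition~\ref{s_infty}), and verifying that the overlap point is an interior, positive-density point of the relevant subtile, so that the geometric lower bound genuinely forces one covering subtile — hence one visit time — at which the polynomial part must vanish. Once such a time is produced, Theorem~\ref{either} closes the argument.
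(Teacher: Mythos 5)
Your scaffolding matches the paper's up to the final step: the paper also argues by contradiction, also uses Theorem~\ref{either} and the constancy of polynomial parts on cylinders implicitly, and also rests the contradiction on Proposition~\ref{s_infty} via the first-entry decomposition. But your concluding mechanism --- matching the symbolic tail $\sum_{N_{k}\geq d}b(N_{k})\alpha^{-(N_{k}-N_{0}+L)}$ against $\mu(O_{d})\asymp\alpha^{-d}$ near the ``overlap point'' --- has a genuine gap, and it is exactly the one you flag as ``the hard part.'' Nothing in the construction makes $\Psi(x)$ a density point, or even a limit point, of $\Psi(\langle z_{0}\cdots z_{N_{0}}\rangle)+\gamma$: you only know $\Psi(y)+\gamma\in O_{N_{0}}(\Psi(x))$, and in general $\Psi(y)+\gamma\neq\Psi(x)$ (the exact-coincidence set is precisely the measure-zero diagonal that the perturbation $\Delta_{\epsilon}$ was introduced to avoid). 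Consequently, for all large $d$ the exclusion $\Psi(\omega)+\gamma\notin O_{d+1}(\Psi(x))$ is automatic and measure-theoretically costless --- the excluded neighborhoods have summable, vanishing mass --- so no quantitative ``deficit'' can be extracted from the tail comparison, and the geometric lower bound you need is unavailable. Verifying positive density of the cylinder image at $\Psi(x)$ is essentially equivalent to the statement being proved, so this step cannot be repaired from within your outline.

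The paper's contradiction comes from a different, purely symbolic source that your sketch never identifies: an engineered asymmetry at time $N_{0}$. The hypothesis (all polynomial parts non-zero) is used only once, through Theorem~\ref{either}, to force $\tau_{2}(x,y)\geq N_{0}$; no per-subcylinder vanish/avoid dichotomy and no Haar-measure comparison appear at all. The point $x$ was chosen with $T^{N_{0}-(N+1)}x\in\mathcal{C}_{0}$, so by (\ref{x_construction}) $T^{i}x\notin\mathcal{C}$ for $N_{0}-(N+1)\leq i\leq N_{0}$, while $z_{N_{0}}=(u_{0}:\emptyset)$ permits a positive-measure set of $\omega\in\langle z_{0}\cdots z_{N_{0}}\rangle$ to enter $\mathcal{C}$ at time $N_{0}$ --- this is the $k=0$ term of $s_{j}$. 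The condition $\tau_{2}(x,y)\geq N_{0}$ transfers to the whole cylinder and yields (\ref{zero_nuki}): $\tau_{2}(x,\omega)\in\{N_{k}\}_{k\geq 1}$, so the time-$N_{0}$ mass can never be claimed by $\tau_{2}(x,\cdot)$. Lemma~\ref{partition} together with (\ref{fibre_inclusion}) then forces $\sum_{k\geq 1}m(\{\omega:\tau_{2}(x,\omega)=N_{k}\})=m(\langle z_{0}\cdots z_{N_{0}}\rangle)$, while Proposition~\ref{s_infty} says this total \emph{plus} the strictly positive $k=0$ term equals the same quantity --- an exact contradiction, with no rate matching and no leading constants to reconcile. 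In short: your reduction via the contrapositive of Theorem~\ref{either} is sound, but the measure-geometric endgame fails where you predicted, and the actual proof replaces it with the $N_{0}$-entry accounting on the subshift.
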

\begin{proof}
First notice that $(x,y)$ visits $\mathcal{C}\times\mathcal{C}$ infinitely often by construction.
Assume that the polynomial part of $\Psi(x)-\Psi(y)-\gamma$
at any possible time never vanishes.
Then $\tau_{2}(x,y)\geq N_{0}$:
indeed,
if $\tau_{2}(x,y)< N_{0}$, Theorem~\ref{either} and the assumption
imply that
$\Psi(y)+\gamma\not\in O_{\tau_{2}(x,y)+1}(\Psi(x))$,
which contradicts the fact that $\Psi(y)+\gamma\in O_{N_{0}}(\Psi(x))$.

The condition $\tau_{2}(x,y)\geq N_{0}$ implies that
\[
(T\times T)^{i}(x,\omega)\not\in\mathcal{C}\times \mathcal{C}\quad
\mathrm{for}\ 0\leq i\leq N_{0}-(N+1),
\]
for any $\omega\in \langle y_{0}\cdots y_{N_{0}-(N+1)} \rangle$.
Moreover, by the choice of $x$ and (\ref{x_construction}),
\[
(T\times T)^{i}(x,\omega)\not\in\mathcal{C}\times \mathcal{C}\quad
\mathrm{for}\ 0\leq i\leq N_{0}
\]
for any $\omega\in\langle z_{0}\cdots z_{N_{0}}\rangle$, which means
\begin{equation}\label{zero_nuki}
\tau_{2}(x,\omega)\in \{ N_{k}\}_{k=1}^{\infty}\quad\mathrm{and}\quad
m\Big(\{ \omega\in \langle z_{0}\cdots z_{N_{0}}\rangle: \tau_{2}(x,\omega)<N_{1} \}\Big)=0.
\end{equation}
For $\omega\in\langle z_{0}\cdots z_{N_{0}}\rangle$,it follows that $\tau_{2}(x,\omega)=N_{k}$ 
if and only if 
the first entry of $\omega$ to $\mathcal{C}$ along the time series
$\{ N_{0},N_{1},\ldots \}$ is $N_{k}$.
Thus
\begin{align*}
s_{j}-m\Big(\{\omega\in \langle z_{0}\cdots z_{N_{0}} \rangle:
T^{N_{0}}\omega\in\mathcal{C}\}\Big)
 &=\sum_{k=1}^{j}m\Big(\{ \omega\in \langle z_{0}\cdots z_{N_{0}}\rangle:
 \tau_{2}(x,\omega)=N_{k} \}\Big)\\
 &=
\sum_{k=0}^{N_{j}}m\Big(\{ \omega\in \langle z_{0}\cdots z_{N_{0}}\rangle:
 \tau_{2}(x,\omega)=k \}\Big)
\end{align*}
by (\ref{zero_nuki}).
Lemma~\ref{partition} and (\ref{fibre_inclusion})
imply that as $j\to\infty$ the righthand side converges to
$m(\langle z_{0}\cdots z_{N_{0}}\rangle)$.
This contradicts Proposition~\ref{s_infty}, which concludes the theorem.
\end{proof}

By Proposition~\ref{distort}, there exists $d\geq 0$ so that
\begin{equation}\label{poly_eq}
\sum_{k=0}^{d-1}\alpha^{k}\langle f(p_{k}),\vector{v}\rangle-\sum_{k=0}^{d-1}
\alpha^{k}\langle f(q_{k}),\vector{v}\rangle-\langle\vector{w},\vector{v}\rangle=0
\end{equation}
where
\[
x=((a_{k+1}:p_{k}a_{k}s_{k}))_{k\geq 0},\quad y=((b_{k+1}:q_{k}b_{k}t_{k}))_{k\geq 0}\quad \mathrm{and}\quad
\gamma=\Phi^{\prime}(\langle\vector{w},\vector{v}\rangle).
\]
We stress that $a_{d}=b_{d}=u_{0}$ because $(T\times T)^{d}(x,y)\in\mathcal{C}\times
\mathcal{C}$ and because
\[
(a_{d+1}:p_{d}a_{d}s_{d})=(u_{0}:\emptyset u_{0}s)=(b_{d+1}:q_{d}b_{d}t_{d})
\]
where $\sigma(u_{0})=u_{0}s$.
In view of the formula 
$\alpha^{k}\langle f(p_{k}),\vector{v}\rangle=\langle f(\sigma^{k}(p_{k})),\vector{v}\rangle$,
the equation (\ref{poly_eq}) reduces to 
\begin{equation}\label{houteishiki}
f(\sigma^{d-1}(p_{d-1})\cdots p_{0})=f(\sigma^{d-1}(q_{d-1})\cdots q_{0})+\vector{w}
\end{equation}
by Lemma~\ref{latticeII} or $\mathbb{Q}$-linear independence of $\vector{v}$.
It immediately follows from (\ref{houteishiki})
that $\vector{w}$ must be in $\mathbb{Z}^{n}$.

Notice that if $\mathrm{Pref}(\sigma^{k},a)$ denotes the set of prefixes
for $\sigma^{k}(a)$,
either $p_{1}\preceq p_{2}$ or $p_{1}\succeq p_{2}$ always holds
for $p_{1},p_{2}\in\mathrm{Pref}(\sigma^{k},a)$. 
As both $\sigma^{d-1}(p_{d-1})\cdots p_{0}$ and $\sigma^{d-1}(q_{d-1})\cdots q_{0}$
are prefixes of $\sigma^{d}(u_{0})$, either of the following cases happens:
\begin{itemize}
\item[I.]$\sigma^{d-1}(p_{d-1})\cdots p_{0}=\sigma^{d-1}(q_{d-1})\cdots q_{0}$,
\item[II.]$\sigma^{d-1}(p_{d-1})\cdots p_{0}\prec\sigma^{d-1}(q_{d-1})\cdots q_{0}$,
\item[III.]$
\sigma^{d-1}(p_{d-1})\cdots p_{0}\succ\sigma^{d-1}(q_{d-1})\cdots q_{0}$.
\end{itemize}

Case I. Observe that $a_{d}=b_{d}=u_{0}$.
By Lemma~1.3 of \cite{dumont1989systemes}, it follows that
$(a_{i+1}:p_{i})=(b_{i+1}:q_{i})$
for $0\leq i\leq d-1$.
Hence $a=a_{0}=b_{0}=b$ and $\vector{w}=0$.

Case II. Denote the $i$th coordinate of $\vector{w}$ by $(\vector{w})_{i}$. By (\ref{houteishiki}),
$\vector{w}$ must be non-positive ($(\vector{w})_{i}\leq 0$ for all $i$) and
non-zero
to cancel the redundancy.
However, this contradicts the condition $\langle\vector{w},\vector{v}\rangle\geq 0$
 (recall that $\vector{v}>0$ by Perron-Frobenius theorem). So this case never happens.

Case III. There exists  a (possibly empty) suffix  $s$ so that
\begin{equation}\label{equal}
\sigma^{d-1}(p_{d-1})\cdots p_{0}=\underbrace{\sigma^{d-1}(q_{d-1})\cdots q_{0}}
_{\textrm{prefix of the left-hand side}}bs.
\end{equation}
Then $\vector{w}$ must compensate $f(bs)$.
The existence of a negative coordinate $(\vector{w})_{i}<0$ makes 
(\ref{houteishiki}) fail to hold because 
the $i$th coordinates of both sides of (\ref{houteishiki}) would not coincide.
If $\vector{w}$ is non-negative ($(\vector{w})_{j}\geq 0$ for all $j$),
then $(\vector{w})_{b}=0$
in order to fulfill the condition
$\langle\vector{w}-\vector{e}_{b},\vector{v}\rangle< 0$. This means that
$f(b)$ is not supplied by $\vector{w}$, which 
contradicts (\ref{equal}).
Hence this case never happens as well.

The preceding argument establishes the following.
\begin{thm}\label{trulytiling}
Let $\sigma$ be an irreducible Pisot substitution. Then
$\mathcal{T}$ is a tiling of $K_{\sigma}\ (d_{\mathrm{cov}}=1)$.
\end{thm}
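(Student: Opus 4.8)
The plan is to deduce Theorem~\ref{trulytiling} from the two structural facts already in place: by Theorem~\ref{mt} the family $\mathcal{T}$ is a \emph{multiple} tiling, and by the tiling criterion (Lemma~\ref{t_criterion}) it is enough to produce a \emph{single} member of $\mathcal{T}$ whose interior meets no other member's interior. I would take this distinguished member to be a subtile sitting at the origin, namely $\mathcal{R}_\sigma(a)$, which is the tile of $\mathcal{T}$ attached to $(\gamma,a)=(0,a)\in\Gamma$ (note $(0,a)\in\Gamma$ since $\vector{v}>0$ forces $\langle-\vector{e}_a,\vector{v}\rangle<0$). Thus the whole problem reduces to showing that an interior overlap
\[
\mathrm{int}\mathcal{R}_{\sigma}(a)\cap\mathrm{int}(\mathcal{R}_{\sigma}(b)+\gamma)\neq\emptyset
\]
can occur only in the trivial case $a=b$, $\gamma=0$.

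First I would argue by contradiction, assuming a nontrivial overlap. The difficulty is that the symbolic preimage of the diagonal intersection under $\Psi\times(\Psi+\gamma)$ is $m\times m$-null, so ergodic theory does not apply to it directly. The remedy of Section~\ref{perturbation} is to fatten the overlap: choose a small ball $\overline{B}_0$ inside the intersection and thicken the fibre by $\epsilon$, producing the set $\Delta_\epsilon$, which has positive $m\times m$-measure by Lemma~\ref{positive}. Ergodicity of $T\times T$ (the weak-mixing property established in Section~\ref{pmeasure}) then lets me decompose $\Delta_\epsilon$ fibrewise according to the first entry time $\tau_2$ to the special-cylinder product $\mathcal{C}\times\mathcal{C}$ (Lemma~\ref{partition}). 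I fix a generic fibre $\Delta_\epsilon(x)$ on which this decomposition and the recurrence property (\ref{C_0}) hold, and construct along it the auxiliary points $y,z$ and the entry-time series $\{N_k\}$.

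The heart of the argument, and the step I expect to be the main obstacle, is forcing the polynomial part of $\Psi(x)-\Psi(y)-\gamma$ to vanish at some level $d$ (Proposition~\ref{distort}). Here two independent evaluations of the same mass must be reconciled. On the symbolic side, summing the measures of the subcylinders organised along $\{N_k\}$ gives $s_\infty=m(\langle z_0\cdots z_{N_0}\rangle)$, which rests on the spectral asymptotics of $a^{(k)}_{II}$ from Theorem~\ref{asympt} (Proposition~\ref{s_infty}). On the metric side, the power-series form of the adelic Garsia estimate (Theorem~\ref{either}) says a \emph{non}-vanishing polynomial part ejects $\Psi(y)+\gamma$ from the neighbourhood $O_{d+1}(\Psi(x))$; if this persisted at every time, the first entry of points in the fibre to $\mathcal{C}\times\mathcal{C}$ would be confined to $\{N_k\}_{k\geq1}$, so that the first-entry mass computed along $\{N_k\}$ would already exhaust $m(\langle z_0\cdots z_{N_0}\rangle)$ by Lemma~\ref{partition} and (\ref{fibre_inclusion}), omitting the positive $N_0$-contribution that Proposition~\ref{s_infty} accounts for in $s_\infty$ — a contradiction. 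The delicate point is the bookkeeping tying the neighbourhood scale $d+1$ to the length $L$ fixed in the special cylinder, so that Theorem~\ref{either} applies exactly along the constructed orbit.

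Finally I would read off the conclusion from the vanishing polynomial part. Vanishing at $d$ translates, via Lemma~\ref{latticeII} and the $\mathbb{Q}$-linear independence of $\vector{v}$, into the integer identity (\ref{houteishiki}) comparing $\sigma^{d-1}(p_{d-1})\cdots p_0$ with $\sigma^{d-1}(q_{d-1})\cdots q_0$; since both are prefixes of $\sigma^{d}(u_0)$ they are comparable, giving a three-way split. Equality yields $a=b$ and $\vector{w}=0$ by Lemma~1.3 of \cite{dumont1989systemes}; a strict prefix forces $\vector{w}$ non-positive and non-zero, contradicting $\langle\vector{w},\vector{v}\rangle\geq0$; and the reverse inclusion would demand that $\vector{w}$ supply $f(b)$ while $\langle\vector{w}-\vector{e}_b,\vector{v}\rangle<0$ forces $(\vector{w})_b=0$, again a contradiction. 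Only the trivial case survives, so $\mathcal{R}_\sigma(a)$ meets no other member of $\mathcal{T}$ in its interior, and Lemma~\ref{t_criterion} delivers $d_{\mathrm{cov}}=1$, i.e. $\mathcal{T}$ is a tiling.
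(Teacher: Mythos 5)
Your proposal is correct and takes essentially the same route as the paper's own proof: the reduction through Theorem~\ref{mt} and the tiling criterion (Lemma~\ref{t_criterion}), the fattening of the overlap to the positive-measure set $\Delta_{\epsilon}$, the comparison of the first-entry mass along $\{N_{k}\}_{k\geq 1}$ (confined there by Theorem~\ref{either} under the non-vanishing assumption) with $s_{\infty}$ from Proposition~\ref{s_infty} to force a vanishing polynomial part, and the concluding three-case analysis of (\ref{houteishiki}) all match Sections~\ref{perturbation}--\ref{distorsion} of the paper. No gaps to report.
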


\section{Consistency Between Classical and Modern Methods}\label{modern}

This section will elaborate on equivalent conditions between tiling and pure discrete spectrum,
which are summarized in \cite{sing2006pisot}.
See also \cite{ito2006atomic} and \cite{barge2006geometric} for unimodular cases
($\mathrm{det}M_{\sigma}=\pm 1$).

The {\it expansive matrix function system}
$\Theta=(\Theta_{ab})_{a,b\in\mathcal{A}}$ on $\mathbb{R}^{n}$ (\S 5.4 of \cite{sing2006pisot})
is defined by
\begin{equation}\label{theta_ab}
\Theta_{ab}=\bigcup_{b\xrightarrow[]{p}a}\{ t_{p}\circ g_{0}\}
\quad\mathrm{for}\  a,b\in\mathcal{A}
\end{equation}
where $g_{0}(\xi)=\alpha\xi$ and $t_{p}(\xi)=\xi+\langle f(p),\vector{v}\rangle$ are mappings
from $\mathbb{R}$ to itself. 
If $\mathcal{H}(\mathbb{R})$ denotes the space of non-empty compact subsets of $\mathbb{R}$,
then the action of 
$\Theta$ on $\mathcal{H}(\mathbb{R})^{n}$ is given by
\[
\Theta(\underline{\Lambda})=\Big(\bigcup_{b\in\mathcal{A}}
\bigcup_{g\in\Theta_{ab}}g(\Lambda_{b})
  \Big)_{a\in\mathcal{A}}
\quad
(\underline{\Lambda}=(\Lambda_{a})_{a\in\mathcal{A}}\in\mathcal{H}(
\mathbb{R}^{n})).
\]
Then $\underline{\Lambda}$ is called a {\it primitive substitution multi-component Delone set} if
$\Theta(\underline{\Lambda})=\underline{\Lambda}$.

The {\it adjoint matrix function system} $\Theta^{\#}$ can be defined by
\[
\Theta^{\#}_{ab}=\bigcup_{a\xrightarrow[]{p}b}\{ g_{0}^{-1}\circ t_{p} \}
\quad\mathrm{for}\  a,b\in\mathcal{A}.
\]
If $A_{a}=[0,\langle \vector{e}_{a},\vector{v}\rangle]$ for $a\in\mathcal{A}$
(the {\it natural intervals}), 
the vector $\underline{A}=(A_{a})_{a\in\mathcal{A}}$
gives a unique attractor  for
$\Theta^{\#}$: $\Theta^{\#}(\underline{A})=\underline{A}$.

As described in Remark~5.75 and (6.1) of \cite{sing2006pisot},
the substitution $\sigma$ gives rise to a {\it tile substitution}
\begin{equation}\label{tilesubst}
A_{a}+x\mapsto\{ A_{b}+\langle f(p),\vector{v}\rangle+\alpha x: b\in\mathcal{A},
t_{p}\circ g_{0}\in\Theta_{ba}\}
\end{equation}
with the `inflation and subdivision' rule 
\[
g_{0}(A_{a})=\bigcup_{b\in\mathcal{A}}\bigcup_{t_{p}\circ g_{0}\in\Theta_{ba}}A_{b}+\langle
f(p),\vector{v}\rangle,
\quad 
(\alpha\Theta^{\#}(\underline{A})=\alpha\underline{A}).
\]
Just as $u_{-1}.u_{0}$ play a role in the fixed point $\sigma(u)=u$, 
iterations of $A_{u_{-1}}.A_{u_{0}}$ in (\ref{tilesubst})
give
a tiling of $\mathbb{R}$ (Definition~5.82 and 
observations after Definition~6.6 of \cite{sing2006pisot}).

For $a\in\mathcal{A}$, let $\Lambda_{a}$ be the set of left endpoints of $A_{a}$ in the tiling.
If $\underline{\Lambda}=(\Lambda_{a})_{a\in\mathcal{A}}$,
it is a primitive substitution multi-component Delone set and is
also obtained by
\[
\underline{\Lambda}=\bigcup_{k\geq 0}\Theta^{k}
(\emptyset,\ldots,\emptyset,
\{ -\langle f(u_{-1}),\vector{v}\rangle \},\emptyset,\ldots,\emptyset)\cup
(\emptyset,\ldots,\emptyset,
\{ 0\},\emptyset,\ldots,\emptyset)
\]
where $\{ -\langle f(u_{-1}),\vector{v}\rangle \}$ and $\{ 0\}$ are placed at 
$u_{-1}$ and $u_{0}$ respectively.

We say that
$\underline{\Lambda}$ has {\it finite local complexity} if for every compact subset $W\subset
\mathbb{R}$,
there exists a finite set $Y\subset\bigcup_{a\in\mathcal{A}}\Lambda_{a}$ such that
\[
\forall \xi\in \bigcup_{a\in\mathcal{A}}\Lambda_{a}
\quad\exists\eta\in Y\quad (W\cap(\Lambda_{a}-\xi))_{a\in\mathcal{A}}
=(W\cap(\Lambda_{a}-\eta))_{a\in\mathcal{A}}.
\]
\begin{thm}[Definition~5.82 and Corollary~6.41 of \cite{sing2006pisot}]
Let $\sigma$ be an irreducible Pisot substitution.
Then $\underline{\Lambda}$ has finite local complexity and is representable
(i.e. $\underline{\Lambda}+\underline{A}$ is a tiling).
\end{thm}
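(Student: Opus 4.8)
The plan is to read off both properties directly from the one-dimensional self-similar tiling generated by the seed $A_{u_{-1}}.A_{u_0}$, exploiting that all data here live on the expanding line $\mathbb{R}$ so that Theorem~\ref{trulytiling} is not needed. The structural fact I would record first is that the natural interval lengths $\ell_a:=\langle\vector{e}_a,\vector{v}\rangle$ are strictly positive (since $\vector{v}=\vector{v}_1>0$ by Perron--Frobenius) and satisfy the compatibility relation $\alpha\ell_a=\ell_{\sigma(a)}=\sum_{b\in\mathcal{A}}\abs{\sigma(a)}_b\,\ell_b$, which follows from $f(\sigma(a))=M_\sigma\vector{e}_a$ together with ${}^{t}M_\sigma\vector{v}=\alpha\vector{v}$. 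This is precisely the identity that makes the inflation--subdivision rule in (\ref{tilesubst}) tile exactly.

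For representability I would first check the tiling property of a single inflation step: for $p\preceq\sigma(a)$ the offset $\langle f(p),\vector{v}\rangle$ equals the partial sum of the lengths of the letters preceding the split, so the subtiles $A_b+\langle f(p),\vector{v}\rangle$ abut and their union is exactly $[0,\alpha\ell_a]$ with pairwise disjoint interiors. Iterating, $\Theta^k$ applied to the seed produces a tiling of a finite interval; the fixed-point structure — $\sigma(u_{-1})$ ends in $u_{-1}$ and $\sigma(u_0)$ begins in $u_0$ — makes these patches \emph{nested} about the origin, while primitivity and $\alpha>1$ force $\abs{\sigma^k(u_0)}\to\infty$ and the covered interval to exhaust $\mathbb{R}$ in both directions. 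Hence the union $\mathcal{P}=\bigcup_{k\geq 0}\Theta^k(A_{u_{-1}}.A_{u_0})$ is a well-defined, gap-free, overlap-free tiling of $\mathbb{R}$. Since by definition $\Lambda_a$ is the set of left endpoints of the $A_a$-tiles of $\mathcal{P}$, we obtain $\bigcup_{a\in\mathcal{A}}(\Lambda_a+A_a)=\mathcal{P}$, so $\underline{\Lambda}+\underline{A}$ is a tiling, i.e.\ $\underline{\Lambda}$ is representable.

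For finite local complexity I would argue that this is essentially automatic once $\mathcal{P}$ is a tiling of $\mathbb{R}$ by intervals whose lengths come from the finite set $\{\ell_a:a\in\mathcal{A}\}$. Fix a compact $W\subset[-R,R]$ and set $\ell_{\min}=\min_a\ell_a>0$. For $\xi\in\bigcup_a\Lambda_a$ the pattern $(W\cap(\Lambda_a-\xi))_{a\in\mathcal{A}}$ is determined by the finite block of consecutive tile types of $\mathcal{P}$ meeting $[\xi-R,\xi+R]$ together with the position of $\xi$ inside its own tile — and that position is $0$, since $\xi$ is a left endpoint. The window meets at most $2R/\ell_{\min}+2$ tiles, each of one of $n$ types, and the relative left-endpoint positions are partial sums of the corresponding lengths, hence determined by the block. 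Thus only finitely many patterns occur, so a finite representative set $Y$ exists; equivalently $\bigcup_{a,b}(\Lambda_a-\Lambda_b)$ is locally finite because every such difference is a finite sum of the $\ell_a$.

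The only genuinely substantive point is the global consistency step inside representability: passing from ``$\Theta^k$ tiles a growing finite interval'' to ``the union is a single tiling of all of $\mathbb{R}$.'' This is where the fixed point $u_{-1}.u_0$ and the positivity and $\alpha^k$-growth of the lengths (the Perron--Frobenius / Pisot data) are used, guaranteeing nestedness and exhaustion; everything else is bookkeeping with finitely many interval lengths. I would then note that this recovers Definition~5.82 and Corollary~6.41 of \cite{sing2006pisot}.
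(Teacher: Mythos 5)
Your argument is sound, but note first what the paper actually does here: it gives \emph{no proof} of this statement at all --- the theorem is imported verbatim from \cite{sing2006pisot} (Definition~5.82 and Corollary~6.41), where it is established inside the general machinery of matrix function systems and Delone multi-sets. So your route is necessarily different: a direct, self-contained one-dimensional proof. Its key points all check out: the identity $\alpha\ell_a=\langle f(\sigma(a)),\vector{v}\rangle=\sum_{b}\vert\sigma(a)\vert_b\,\ell_b$ (from $f\circ\sigma=M_\sigma\circ f$ and ${}^{t}M_\sigma\vector{v}=\alpha\vector{v}$) makes each inflation step an exact subdivision of $[0,\alpha\ell_a]$ with the offsets $\langle f(p),\vector{v}\rangle$ as partial sums; the seed $A_{u_{-1}}.A_{u_0}$ yields nested patches because $\sigma(u_{-1})$ ends in $u_{-1}$ and $\sigma(u_0)$ begins with $u_0$; $\alpha>1$ gives exhaustion of $\mathbb{R}$, so the union is a tiling and representability follows from the definition of $\Lambda_a$ as the set of left endpoints; and finite local complexity reduces to the finiteness of $\{\ell_a\}$ together with the bound of roughly $2R/\ell_{\min}+2$ tiles per window, since a local pattern is determined by the word of tile types and $\xi$ sits at a left endpoint. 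What your approach buys is self-containedness plus an observation the citation hides: neither the Pisot hypothesis nor Theorem~\ref{trulytiling} enters at this step --- only primitivity, $\alpha>1$, and positivity of $\vector{v}$ --- whereas the paper's choice to cite keeps the statement aligned with the CPS framework of \cite{sing2006pisot} on which it also relies for Theorem~\ref{pointset} and Lemma~\ref{associated_tiling}. Two cosmetic caveats: the existence of the two-sided fixed point $u$ (hence of the admissible seed $u_{-1}.u_0$) may require replacing $\sigma$ by a power, which the paper has already arranged in Section~2; and you write $\Theta^k$ as acting on the patch $A_{u_{-1}}.A_{u_0}$, while $\Theta$ as defined acts on vectors of compact sets --- the action on patches is the tile substitution (\ref{tilesubst}), which is what your nestedness induction actually uses, so this is a matter of phrasing rather than substance.
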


A pair $T_{a}=(A_{a}, a)$ is a {\it tile} of $\mathbb{R}$
and define $t+T_{a}=(t+A_{a}, a)$ for $t\in\mathbb{R}$. Then
\[
\mathcal{T}_{\Lambda}=\{ \xi_{a}+T_{a}: \xi_{a}\in\Lambda_{a}\ 
(a\in\mathcal{A})\}
\]
is a tiling of $\mathbb{R}$ with finite local complexity in the sense of \cite{lee2003consequences}.
Set
\[
\mathbb{X}(\mathcal{T}_{\Lambda})=\overline{\{ -t+\mathcal{T}_{\Lambda}:t\in\mathbb{R}\}}
\]
(the orbit closure of a point $\mathcal{T}_{\Lambda}$ by $\mathbb{R}$ action with respect to
the {\it tiling metric}). 
Since $\mathcal{T}_{\Lambda}$ is a fixed point of the tile substitution corresponding to
(\ref{tilesubst}),
the {\it substitution tiling dynamical system} $(\mathbb{X}(\mathcal{T}_{\Lambda}),\mathbb{R})$
is uniquely ergodic (Theorem~4.1 of \cite{lee2003consequences}).

\begin{defn}
A {\it cut and project scheme} $(G,H,\tilde{\mathcal{L}})$ (or {\it CPS} for brevity)
consists of a locally compact abelian group $G$ which is a countable union of compact subsets,
a locally compact abelian group $H$ and a lattice $\tilde{\mathcal{L}}$ in $G\times H$, so that
the natural projections $\pi_{G}:G\times H\to G,\pi_{H}:G\times H\to H$ satisfy
\begin{itemize}
\item[(1)]$\pi_{G}\vert_{\tilde{\mathcal{L}}}$ is injective,
\item[(2)]$\pi_{H}(\tilde{\mathcal{L}})$ is dense in $H$.
\end{itemize}
A CPS $(G,H,\tilde{\mathcal{L}})$ is {\it symmetric} if $(H,G,\tilde{\mathcal{L}})$
is a CPS as well.
Setting $\mathcal{L}=\pi_{G}(\tilde{\mathcal{L}})$,
the {\it star-map} is defined by $(\cdot)^{\star}=\pi_{H}
\circ (\pi_{G}\vert_{\tilde{\mathcal{L}}})
^{-1}:\mathcal{L}\to H$.
\end{defn}

\begin{thm}[Proposition~6.3 of \cite{minervino2014geometry}]
$(\mathbb{R},K_{\sigma},\Phi(V\cdot\mathbb{Z}[\alpha^{-1}]))$
is a symmetric CPS with the star-map $\xi^{\star}=\Phi^{\prime}(\xi)$.
\end{thm}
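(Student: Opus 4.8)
The plan is to realize the scheme inside the adèle ring $K_\alpha$ already at hand. Since $\mathfrak{M}^{\prime}=\mathfrak{M}\cup\{v_1\}$ with $K_{v_1}=\mathbb{R}$ (the expanding real place, $\vert\alpha\vert_{v_1}=\alpha$), there is a canonical identification $K_\alpha=K_{v_1}\times\prod_{v\in\mathfrak{M}}K_v=\mathbb{R}\times K_\sigma=G\times H$, under which the projections of the scheme are exactly $\pi_G=\pi_1$ and $\pi_H=\pi_2$. The candidate lattice is $\tilde{\mathcal{L}}=\Phi(V\cdot\mathbb{Z}[\alpha^{-1}])$. Note first that $G=\mathbb{R}$ is a $\sigma$-compact locally compact abelian group and that $H=K_\sigma$, being a finite product of the local fields $K_v$, is locally compact abelian. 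I would then verify the lattice property and the four (in)equalities demanded by symmetry, reading off the star-map at the end.

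For the lattice property, recall from the remark following the definition of $\Gamma$ that $V\cdot\mathbb{Z}[\alpha^{-1}]$ is a non-zero fractional ideal of $\mathbb{Z}[\alpha^{-1}]$, hence commensurable with the ring $\mathfrak{O}_{K,S}$ of $S$-integers for $S=\mathfrak{M}^{\prime}$. By construction $S$ consists of precisely the infinite places together with the finite primes $\mathfrak{p}_i$ dividing $(\alpha)$, that is, exactly the places at which elements of $\mathbb{Z}[\alpha^{-1}]$ can fail to be integral. Consequently the product formula together with Dirichlet's $S$-unit theorem—equivalently, the $S$-integer form of Minkowski's lattice theorem (see \S 3 and Lemma~7.1 of \cite{minervino2014geometry})—shows that $\Phi(\mathfrak{O}_{K,S})$, and with it the commensurable set $\tilde{\mathcal{L}}=\Phi(V\cdot\mathbb{Z}[\alpha^{-1}])$, is a discrete and cocompact subgroup of $\prod_{v\in\mathfrak{M}^{\prime}}K_v=K_\alpha$, i.e. a lattice.

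Three of the four remaining conditions are then immediate. For any place $v$ the $v$-component $\tau_v\colon K\to K_v$ is a field embedding (or the inclusion of $K$ into a completion), hence injective on $K\supset V\cdot\mathbb{Z}[\alpha^{-1}]$; taking $v=v_1$ gives $\ker(\pi_G\vert_{\tilde{\mathcal{L}}})=\{0\}$, and taking any $v\in\mathfrak{M}$ (this set is non-empty, containing the contracting conjugate places) gives $\ker(\pi_H\vert_{\tilde{\mathcal{L}}})=\{0\}$. For the density of $\pi_G(\tilde{\mathcal{L}})=\tau_{v_1}(V\cdot\mathbb{Z}[\alpha^{-1}])$ in $\mathbb{R}$, observe that this subgroup contains $\tau_{v_1}(v_i)\,\alpha^{-k}$ for every $k\geq 0$, which are non-zero and tend to $0$; a subgroup of $\mathbb{R}$ accumulating at the origin is dense.

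The one substantive condition is the density of $\pi_H(\tilde{\mathcal{L}})=\Phi^{\prime}(V\cdot\mathbb{Z}[\alpha^{-1}])$ in $K_\sigma$, and I expect this to be the main obstacle: it is the only axiom using the non-archimedean places essentially, and the mixed archimedean/$\mathfrak{p}$-adic structure of $K_\sigma$ must be handled with care. My plan is to reduce it to a classical fact. Since $\Phi^{\prime}=\pi_2\circ\Phi$ is a ring homomorphism and $V\cdot\mathbb{Z}[\alpha^{-1}]$ is a non-zero fractional ideal, it contains $\xi\,\mathbb{Z}[\alpha]$ for some $\xi\in K^{\times}$; as $\Phi^{\prime}(\xi)$ is invertible in $K_\sigma$, multiplication by it is a homeomorphism, so it suffices to prove that $\Phi^{\prime}(\mathbb{Z}[\alpha])$ is dense in $K_\sigma$. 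This is the density underlying the whole cut-and-project picture of Rauzy fractals: the rank-$n$ group $\Phi^{\prime}(\mathbb{Z}[\alpha])$ maps into $K_\sigma$, whose archimedean part $\mathbb{R}^{r-1}\times\mathbb{C}^{s}$ has real dimension $n-1$, and the product formula together with the irreducibility of the characteristic polynomial rules out any non-trivial character of $K_\sigma$ annihilating it, forcing density (equivalently, this is strong approximation with $v_1$ as the distinguished place). Once density holds, the star-map is $\xi^{\star}=\pi_H\circ(\pi_G\vert_{\tilde{\mathcal{L}}})^{-1}(\xi)=\Phi^{\prime}(\xi)$, and symmetry is automatic, because $(H,G,\tilde{\mathcal{L}})$ demands exactly the injectivity of $\pi_H\vert_{\tilde{\mathcal{L}}}$ and the density of $\pi_G(\tilde{\mathcal{L}})$, both already established.
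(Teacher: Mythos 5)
You should first be aware that the paper contains no proof of this statement at all: it is quoted as Proposition~6.3 of \cite{minervino2014geometry}, so your attempt can only be compared with the standard argument given there. Most of your steps are sound: the identification $K_{\alpha}=\mathbb{R}\times K_{\sigma}$ with $\pi_{G}=\pi_{1}$, $\pi_{H}=\pi_{2}$; the lattice property of $\Phi(V\cdot\mathbb{Z}[\alpha^{-1}])$ via commensurability with the $S$-integers $\mathfrak{O}_{K,S}$ for $S=\mathfrak{M}^{\prime}$ (though the attribution to Dirichlet's $S$-unit theorem is off --- that theorem concerns the unit group; what you need is the classical discreteness and cocompactness of $\mathfrak{O}_{K,S}$ in $\prod_{v\in S}K_{v}$); the injectivity of both restricted projections via injectivity of the field embeddings $\tau_{v}$; and the density of $\pi_{G}(\tilde{\mathcal{L}})$ in $\mathbb{R}$ from $\tau_{v_{1}}(v_{i})\alpha^{-k}\to 0$.

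The genuine gap sits in the one step you yourself flag as substantive. Your reduction of the density of $\pi_{H}(\tilde{\mathcal{L}})$ to the density of $\Phi^{\prime}(\mathbb{Z}[\alpha])$ in $K_{\sigma}$ is false whenever $M_{\sigma}$ is not unimodular. Indeed $\mathbb{Z}[\alpha]\subseteq\mathfrak{O}_{K}$, so at every finite place $v=v_{\mathfrak{p}_{i}}\in\mathfrak{M}\backslash\mathfrak{M}_{\infty}$ one has $\vert\xi\vert_{v}\leq 1$ for all $\xi\in\mathbb{Z}[\alpha]$; consequently the closure of $\Phi^{\prime}(\mathbb{Z}[\alpha])$ lies in the closed subgroup $(\mathbb{R}^{r-1}\times\mathbb{C}^{s})\times\prod_{i=1}^{\kappa}\{x\in K_{v_{\mathfrak{p}_{i}}}:\vert x\vert_{v_{\mathfrak{p}_{i}}}\leq 1\}$, which is a \emph{proper} subgroup of $K_{\sigma}$ exactly when $\kappa\geq 1$, i.e.\ exactly in the mixed archimedean/$\mathfrak{p}$-adic situation you set out to handle. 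The finite places belong to $\mathfrak{M}$ precisely because $\mathfrak{p}_{i}$ divides $(\alpha)$, and the unbounded $\mathfrak{p}$-adic directions are reached only through negative powers of $\alpha$; by shrinking the fractional ideal to $\xi\,\mathbb{Z}[\alpha]$ you discarded them. Your heuristic justification (rank $n$ against real dimension $n-1$, plus a character argument) sees only the archimedean component and cannot detect this failure. The repair is immediate and actually shortens the argument: by definition a fractional ideal of $\mathbb{Z}[\alpha^{-1}]$ contains $\xi\,\mathbb{Z}[\alpha^{-1}]$ for some $\xi\in K^{\times}$ --- there was no need to pass to $\mathbb{Z}[\alpha]$ at all --- and the strong approximation theorem you invoke at the end (with $v_{1}$ as the omitted place, applied to $\mathbb{Z}[\alpha^{-1}]$, commensurable with $\mathfrak{O}_{K,S}$) yields the density of $\Phi^{\prime}(\mathbb{Z}[\alpha^{-1}])$ in $K_{\sigma}$ directly. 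In the unimodular case ($\kappa=0$, $K_{\sigma}=\mathbb{R}^{r-1}\times\mathbb{C}^{s}$) your reduction and the claimed density are correct, which is presumably why the error is easy to overlook.
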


The expansive matrix function system  $\Theta$ can be extended to the
{\it iterated function system}
$\Theta^{\star}=(\Theta_{ab})_{a,b\in\mathcal{A}}$ 
relative to the CPS $(\mathbb{R},K_{\sigma},\Phi(V\cdot\mathbb{Z}[\alpha^{-1}]))$
(\S 6.6 of \cite{sing2006pisot})
where we replace $g_{0}$ and $t_{p}$ in (\ref{theta_ab}) by 
$g_{0}(\xi^{\star})=\alpha^{\star} \xi^{\star}$ and $t_{p}(\xi^{\star})=
\xi^{\star}+\langle f(p),\vector{v}\rangle^{\star}$ respectively, and by extension
$\Theta_{ab}$ can be regarded as a set of mappings 
on $K_{\sigma}$
because $\Phi^{\prime}(V\cdot\mathbb{Z}[\alpha^{-1}])$ is dense in $K_{\sigma}$.
If $\mathcal{H}(K_{\sigma})$ denotes the space of non-empty compact subsets of $K_{\sigma}$,
then $\Theta^{\star}$ acts on $\mathcal{H}(K_{\sigma})^{n}$.
It is known for $\Theta^{\star}$
to admit an unique
attractor  (Corollary~6.63 of \cite{sing2006pisot})
: $\Theta^{\star}(\underline{\Omega})=\underline{\Omega}$.

\begin{defn}
Given a CPS $(G,H,\tilde{\mathcal{L}})$ and a subset $W\subset H$, define
\[
\Lambda(W)=\{ \xi\in\mathcal{L}:
\xi^{\star}\in W\}.
\]
If $W$ is non-empty compact and if $W=\overline{\mathrm{int}W}$,then
$\Lambda(W)$ is called a {\it model set}, and
$W$ is referred to as the ({\it acceptance}) {\it window} of the model set. 
A model set $\Lambda(W)$
is {\it regular} if the Haar measure of $\partial W$ is zero.
A subset $Q$ of a model set $\Lambda(W)$ is an {\it inter model set} if
$\Lambda(\mathrm{int}W)\subset Q\subset \Lambda(W)$.
\end{defn}

Consider the symmetric CPS $(K_{\sigma},\mathbb{R},\Phi(V\cdot\mathbb{Z}[\alpha^{-1}]))$.
Its star-map is defined by $(\cdot)^{\star}=\pi_{1}
\circ (\pi_{2}\vert_{\Phi(V\cdot\mathbb{Z}[\alpha^{-1}])})
^{-1}$.
To $(K_{\sigma},\mathbb{R},\Phi(V\cdot\mathbb{Z}[\alpha^{-1}]))$,
one can associate a regular inter model set 
\[
\Upsilon_{a}:=
\Lambda([0,\langle\vector{e}_{a},\vector{v}\rangle))=\{ X=\Phi^{\prime}(\xi)
\in \Phi^{\prime}( 
V\cdot\mathbb{Z}[\alpha^{-1}])
: X^{\star}=\xi\in [0,\langle\vector{e}_{a},\vector{v}\rangle)\}
\]
for $a\in\mathcal{A}$. Write
$\underline{\Upsilon}=(\Upsilon_{a})_{a\in\mathcal{A}}$.

\begin{thm}[(6.4) and Theorem~7.7 of \cite{minervino2014geometry}]\label{consistency}
Let $\sigma$ be an irreducible Pisot substitution. Then
\[
\underline{\Omega}=(\Omega_{a})_{a\in\mathcal{A}}=
(\mathcal{R}_{\sigma}(a))_{a\in\mathcal{A}}
\quad \mathrm{and}\quad \underline{\Upsilon}=
(\Upsilon_{a})_{a\in\mathcal{A}}
=(\Gamma_{a})_{a\in\mathcal{A}}.
\]
\end{thm}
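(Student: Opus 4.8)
The plan is to prove the two identities separately, treating the attractor equality $\underline{\Omega}=(\mathcal{R}_{\sigma}(a))_{a}$ as the substantive part and the model-set equality $\underline{\Upsilon}=(\Gamma_{a})_{a}$ as an unwinding of definitions.

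For the first identity I would show that $(\mathcal{R}_{\sigma}(a))_{a\in\mathcal{A}}$ is a fixed point of the (graph-directed) iterated function system $\Theta^{\star}$ and then invoke the uniqueness of its attractor recalled before the theorem (Corollary~6.63 of \cite{sing2006pisot}). Writing $\Theta^{\star}(\underline{\Omega})=\underline{\Omega}$ componentwise, and using that on the internal space $K_{\sigma}$ the map $g_{0}$ becomes multiplication by $\alpha$ (acting coordinatewise as $X_{v}\mapsto\alpha_{v}X_{v}$) while $t_{p}$ becomes translation by $\Phi^{\prime}(\langle f(p),\vector{v}\rangle)$, the fixed-point equation reads
\[
\Omega_{a}=\bigcup_{b\xrightarrow[]{p}a}\bigl(\alpha\,\Omega_{b}+\Phi^{\prime}(\langle f(p),\vector{v}\rangle)\bigr)\qquad(a\in\mathcal{A}).
\]
This is exactly the set equation (\ref{seteqI}) with $m=1$ satisfied by the subtiles: indeed $T^{-1}_{\rm ext}(\vector{0},a)$ ranges over the edges $b\xrightarrow[]{p}a$ and contributes $\alpha(\mathcal{R}_{\sigma}(b)+\alpha^{-1}\Phi^{\prime}(\langle f(p),\vector{v}\rangle))$. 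Since each subtile is non-empty and compact (cf.\ the compactness of $\mathcal{R}_{\sigma}$ after Lemma~\ref{continuous} together with (\ref{ca})), the family $(\mathcal{R}_{\sigma}(a))_{a}$ lies in $\mathcal{H}(K_{\sigma})^{n}$ and solves the same equation. Because every $\alpha_{v}$ with $v\in\mathfrak{M}$ satisfies $\vert\alpha_{v}\vert<1$ (the finite conjugates of the Pisot number $\alpha$ contract at the infinite primes, and $\vert\alpha\vert_{v}=q_{i}^{-\nu_{i}}<1$ at the finite primes $v_{\mathfrak{p}_{i}}$), each generating map $X\mapsto\alpha X+c$ is a strict contraction of $K_{\sigma}$; this is precisely why the attractor is unique, and hence $\underline{\Omega}$ and $(\mathcal{R}_{\sigma}(a))_{a}$, both fixed points, coincide.

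For the second identity I would unwind the model-set description of $\Upsilon_{a}$. By the Remark following the definition of $\Gamma$, the map $\vector{w}\mapsto\langle\vector{w},\vector{v}\rangle$ is an isomorphism of $Z$ onto $V\cdot\mathbb{Z}[\alpha^{-1}]$, so every $\xi\in V\cdot\mathbb{Z}[\alpha^{-1}]$ is uniquely $\langle\vector{w},\vector{v}\rangle$ with $\vector{w}\in Z$; moreover the star-map of the symmetric CPS $(K_{\sigma},\mathbb{R},\Phi(V\cdot\mathbb{Z}[\alpha^{-1}]))$ sends $X=\Phi^{\prime}(\xi)$ to $X^{\star}=\xi$ (the $v_{1}$-coordinate). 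The window condition $X^{\star}\in[0,\langle\vector{e}_{a},\vector{v}\rangle)$ then reads $\langle\vector{w},\vector{v}\rangle\geq 0$ and $\langle\vector{w}-\vector{e}_{a},\vector{v}\rangle<0$, which are precisely the defining inequalities of $\Gamma_{a}$ in (\ref{derived}), while $X=\Phi^{\prime}(\langle\vector{w},\vector{v}\rangle)$ is exactly the associated element of $\Gamma_{a}$ (uniqueness of $\vector{w}$ being guaranteed by Lemma~\ref{latticeII}). Hence $\Upsilon_{a}=\Gamma_{a}$ for every $a\in\mathcal{A}$.

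The main obstacle is the bookkeeping in the first identity: one must carefully reconcile the edge and direction conventions (the reversed reading $b\xrightarrow[]{p}a$ of $\mathcal{E}$, the placement of $\alpha^{-1}$ in $T^{-1}_{\rm ext}$, and the passage of $g_{0},t_{p}$ from the expanding copy $\mathbb{R}$ to the internal space $K_{\sigma}$) so that the two set equations agree \emph{literally}, and then confirm that the contraction factor $\max_{v\in\mathfrak{M}}\vert\alpha_{v}\vert<1$ genuinely makes $\Theta^{\star}$ contractive. Once these conventions are aligned, both equalities follow without any further estimates.
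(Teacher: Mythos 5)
Your proposal is correct, but note that the paper does not prove Theorem~\ref{consistency} at all: it is imported wholesale, the bracketed attribution [(6.4) and Theorem~7.7 of \cite{minervino2014geometry}] being the entire ``proof.'' What you have written is essentially a reconstruction of the standard argument from that reference, and it checks out against the paper's own apparatus. For the first identity, your verification that $(\mathcal{R}_{\sigma}(a))_{a}$ satisfies the fixed-point equation of $\Theta^{\star}$ is exactly the $m=1$ case of the set equation (\ref{seteqI}) (the edge conventions and the placement of $\alpha^{-1}$ in $T^{-1}_{\rm ext}$ work out as you say), each subtile is non-empty and compact (non-emptiness of $\Sigma_{A}(a)$ follows from primitivity, compactness from Lemma~\ref{continuous}), and the contraction of every generating map holds because $\vert\alpha_{v}\vert<1$ at the infinite primes $v\in\mathfrak{M}_{\infty}\setminus\{v_{1}\}$ (Pisot) and $\vert\alpha\vert_{v}=q_{i}^{-\nu_{i}}<1$ at the finite primes; uniqueness of the attractor is then exactly what the paper quotes from Corollary~6.63 of \cite{sing2006pisot}, so your appeal to it is legitimate rather than circular. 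For the second identity, the unwinding via the isomorphism $\vector{w}\mapsto\langle\vector{w},\vector{v}\rangle$ of $Z$ onto $V\cdot\mathbb{Z}[\alpha^{-1}]$ (the Remark after the definition of $\Gamma$, i.e.\ Lemma~7.1 of \cite{minervino2014geometry}) together with Lemma~\ref{latticeII} for uniqueness, and the half-open window $[0,\langle\vector{e}_{a},\vector{v}\rangle)$ matching the inequalities $\langle\vector{w},\vector{v}\rangle\geq 0$, $\langle\vector{w}-\vector{e}_{a},\vector{v}\rangle<0$ in (\ref{derived}), is precisely how $\Upsilon_{a}=\Gamma_{a}$ is meant to be read. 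What the paper's citation buys is brevity and the delegation of the non-unimodular normalization details to \cite{minervino2014geometry}; what your version buys is a demonstration that, given the uniqueness of the $\Theta^{\star}$-attractor, the theorem is actually a consequence of material already established internally in the paper ((\ref{seteqI}), Lemma~\ref{latticeII}, the Remark on $Z\cong V\cdot\mathbb{Z}[\alpha^{-1}]$), which makes the logical dependencies explicit. I see no gap.
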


\begin{lem}\label{associated_tiling}
$\underline{\Upsilon}+\underline{\Omega}=:
\bigcup_{a\in\mathcal{A}}\{ \Omega_{a}+\gamma: \gamma\in \Upsilon_{a}\}$
equals $\mathcal{T}$ and thus is a tiling.
\end{lem}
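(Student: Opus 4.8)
The plan is to treat this lemma as a bookkeeping identity: it records that the modern cut-and-project description of the covering coincides, tile for tile, with the classical family $\mathcal{T}$, after which the tiling property transfers immediately from Theorem~\ref{trulytiling}. No new analysis is required; all the substantive work has already been done in Theorem~\ref{consistency} and Theorem~\ref{trulytiling}.

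First I would substitute the identifications furnished by Theorem~\ref{consistency}, namely $\Omega_{a}=\mathcal{R}_{\sigma}(a)$ and $\Upsilon_{a}=\Gamma_{a}$ for every $a\in\mathcal{A}$. Inserting these into the definition of $\underline{\Upsilon}+\underline{\Omega}$ rewrites it as $\bigcup_{a\in\mathcal{A}}\{\mathcal{R}_{\sigma}(a)+\gamma:\gamma\in\Gamma_{a}\}$. Next I would match index sets. By the definition of $\Gamma_{a}$ in (\ref{derived}), the set $\Gamma_{a}$ is precisely the fiber $\{\gamma:(\gamma,a)\in\Gamma\}$; equivalently, a pair $(\gamma,a)$ lies in $\Gamma$ if and only if $a\in\mathcal{A}$ and $\gamma\in\Gamma_{a}$. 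By the Remark following the definition of $\Gamma$, each such pair corresponds to a unique $\vector{w}\in Z$, so there is no double counting of members. Consequently
\[
\bigcup_{a\in\mathcal{A}}\{\mathcal{R}_{\sigma}(a)+\gamma:\gamma\in\Gamma_{a}\}
=\{\mathcal{R}_{\sigma}(a)+\gamma:(\gamma,a)\in\Gamma\},
\]
and the right-hand side is by definition $\mathcal{T}$. This yields $\underline{\Upsilon}+\underline{\Omega}=\mathcal{T}$.

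Finally I would invoke Theorem~\ref{trulytiling}, which asserts that $\mathcal{T}$ is a tiling of $K_{\sigma}$ (that is, $d_{\mathrm{cov}}=1$); the equality just established therefore makes $\underline{\Upsilon}+\underline{\Omega}$ a tiling as well. The only step calling for any care is the matching of index sets together with the observation that the two formulations of ``tiling'' in play---the covering-degree formulation attached to $\mathcal{T}$ and the representable multi-component formulation attached to $\underline{\Upsilon}+\underline{\Omega}$---refer to the very same geometric object, so no translation between the two notions is needed beyond noting that the underlying families of tiles literally coincide. I expect no analytic obstacle here, since the content has already been expended upstream.
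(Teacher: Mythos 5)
Your proposal is correct and follows exactly the paper's route: the paper's own proof simply cites Theorem~\ref{consistency} (for $\Omega_{a}=\mathcal{R}_{\sigma}(a)$ and $\Upsilon_{a}=\Gamma_{a}$) together with Theorem~\ref{trulytiling}, declaring the result obvious. You merely spell out the index-set matching that the paper leaves implicit, which is a harmless and accurate elaboration.
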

\begin{proof}
It is obvious from Theorem~\ref{trulytiling} and Theorem~\ref{consistency}.
\end{proof}

Define
\[
\mathbb{X}(\underline{\Lambda})=\overline{\{-t+\underline{\Lambda}:t\in\mathbb{R}\}}
\] 
(the orbit closure of a point $\underline{\Lambda}$ by $\mathbb{R}$ action with respect to
the {\it local metric}. See Definition~5.102 of \cite{sing2006pisot} or (2,2) 
of \cite{lee2003consequences}).
The  {\it point set dynamical system}
$(\mathbb{X}(\underline{\Lambda}),\mathbb{R})$ is known to be uniquely ergodic.

\begin{thm}[Theorem 6.116 of \cite{sing2006pisot}]\label{pointset}
Let $\sigma$ be an irreducible Pisot substitution.
The followings are equivalent.
\begin{itemize}
\item[(1)]$\underline{\Upsilon}+\underline{\Omega}$ is a tiling.
\item[(2)] The point set dynamical system $(\mathbb{X}(\underline{\Lambda}),\mathbb{R})$
has pure point spectrum.
\item[(3)]Each $\Lambda_{a}$ is an inter model set with $\Lambda(\mathrm{int}
\Omega_{a})\subset\Lambda_{a}\subset\Lambda(\Omega_{a})$.
\end{itemize}
\end{thm}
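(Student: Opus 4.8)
The plan is to argue all three conditions inside the single symmetric cut and project scheme $(\mathbb{R},K_{\sigma},\Phi(V\cdot\mathbb{Z}[\alpha^{-1}]))$, with $\mathbb{R}$ as physical space and $K_{\sigma}$ as internal space, so that the star-map $\xi^{\star}=\Phi^{\prime}(\xi)$ exchanges the roles of the two factors. By Theorem~\ref{consistency} I may identify $\Omega_{a}=\mathcal{R}_{\sigma}(a)$ and $\Upsilon_{a}=\Gamma_{a}$ throughout, so that $\underline{\Upsilon}+\underline{\Omega}=\mathcal{T}$. The two equivalences have different characters: $(1)\Leftrightarrow(3)$ is a geometric window argument, while $(2)\Leftrightarrow(3)$ is the spectral theory of regular model sets, and I would establish them separately.

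For $(1)\Leftrightarrow(3)$: condition $(1)$ asserts that the translates $\Omega_{a}+\gamma$ ($\gamma\in\Upsilon_{a}=\Gamma_{a}$) cover $K_{\sigma}$ with covering degree $1$ for $\mu$-almost every point. Dualizing through the CPS, I would show that $\xi\in\Lambda_{a}$ precisely when the lifted point $\xi^{\star}$ lies in the window $\Omega_{a}$, up to the boundary $\partial\Omega_{a}$. Since $\mu(\partial\Omega_{a})=0$ and $\Omega_{a}=\overline{\mathrm{int}\,\Omega_{a}}$ by (\ref{ca}), this boundary ambiguity is confined to a null set and yields exactly the sandwiching $\Lambda(\mathrm{int}\,\Omega_{a})\subset\Lambda_{a}\subset\Lambda(\Omega_{a})$ of $(3)$. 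Conversely, that sandwiching forces the windows $\Omega_{a}$ to cover $K_{\sigma}$ with degree one off a null set, i.e. $\mathcal{T}$ is a tiling.

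For $(2)\Leftrightarrow(3)$: the implication $(3)\Rightarrow(2)$ is the fundamental theorem on regular model sets, namely that an inter model set with windows $\Omega_{a}$ of measure-zero boundary (again guaranteed by (\ref{ca})) has pure point dynamical spectrum; I would invoke the Schlottmann and Baake--Moody framework as packaged for primitive substitution Delone multisets in \cite{sing2006pisot}. For the converse $(2)\Rightarrow(3)$, I would use that the Pisot hypothesis makes $\underline{\Lambda}$ a Meyer multiset, so that pure point spectrum upgrades $\underline{\Lambda}$ to a regular model multiset; its windows must then be the subtiles $\Omega_{a}$, by uniqueness of the attractor $\Theta^{\star}(\underline{\Omega})=\underline{\Omega}$ (Corollary~6.63 of \cite{sing2006pisot}) together with Theorem~\ref{consistency}. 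This returns the description $(3)$.

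The step I expect to be the main obstacle is the converse $(2)\Rightarrow(3)$: extracting a concrete cut-and-project window from the bare spectral hypothesis requires the Meyer property and the hard half of model-set theory, namely the passage from pure point diffraction to a regular acceptance window. It is exactly here that irreducibility together with the Pisot condition is indispensable, and the argument is carried out in \cite{sing2006pisot}; once it is granted, identifying the window with $\Omega_{a}$ is routine via Theorem~\ref{consistency}. I note finally that this paper supplies condition $(1)$ unconditionally through Theorem~\ref{trulytiling}, so that the present equivalence delivers the pure point spectrum of $(2)$ for every irreducible Pisot substitution.
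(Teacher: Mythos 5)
The paper offers no internal proof of this statement: it is imported verbatim as Theorem~6.116 of \cite{sing2006pisot}, so the paper's ``proof'' is precisely a deferral to that reference. Your sketch correctly reconstructs the architecture of the cited source's argument --- the symmetric-CPS duality with the identifications of Theorem~\ref{consistency} for $(1)\Leftrightarrow(3)$, Schlottmann-type regularity of model sets (using $\mu(\partial\Omega_{a})=0$ from (\ref{ca})) for $(3)\Rightarrow(2)$, and the Meyer-property/inter-model-set converse together with uniqueness of the attractor $\Theta^{\star}(\underline{\Omega})=\underline{\Omega}$ for $(2)\Rightarrow(3)$ --- and, exactly like the paper, it defers the genuinely hard step to \cite{sing2006pisot}, so it is correct and takes essentially the same route.
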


Since $(\mathbb{X}(\mathcal{T}_{\Lambda}),\mathbb{R})$ and $(\mathbb{X}(\underline{\Lambda})
,\mathbb{R})$ are topologically conjugate (Lemma~5.115 of \cite{sing2006pisot}
or Lemma~3.10 of \cite{lee2003consequences}), Lemma~\ref{associated_tiling}
and Theorem~\ref{pointset} imply the following.

\begin{thm}\label{substiling}
For an irreducible Pisot substitution,
the {\it substitution tiling dynamical system} $(\mathbb{X}(\mathcal{T}_{\Lambda}),\mathbb{R})$
has pure point spectrum.
\end{thm}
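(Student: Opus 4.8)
The plan is to obtain the spectral conclusion for $(\mathbb{X}(\mathcal{T}_{\Lambda}),\mathbb{R})$ by chaining together three facts already established: the tiling property of $\mathcal{T}$, the spectral equivalence of Theorem~\ref{pointset}, and the conjugacy between the tiling and point-set flows. All of the genuinely hard analysis has been front-loaded, so the present statement should reduce to transporting an already-proven property across a conjugacy.

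First I would feed the tiling into the equivalence. By Lemma~\ref{associated_tiling}, the collection $\underline{\Upsilon}+\underline{\Omega}$ coincides with $\mathcal{T}$, and Theorem~\ref{trulytiling} guarantees that $\mathcal{T}$ is a tiling ($d_{\mathrm{cov}}=1$); this is exactly condition (1) of Theorem~\ref{pointset}. Invoking that equivalence (Theorem~6.116 of \cite{sing2006pisot}), condition (1) yields condition (2): the point set dynamical system $(\mathbb{X}(\underline{\Lambda}),\mathbb{R})$ has pure point spectrum. This step is where the classical Rauzy-fractal picture and the modern cut-and-project formulation are reconciled, and nothing beyond the cited equivalence is needed.

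Next I would transfer the spectral type to the target system along the conjugacy. The flows $(\mathbb{X}(\mathcal{T}_{\Lambda}),\mathbb{R})$ and $(\mathbb{X}(\underline{\Lambda}),\mathbb{R})$ are topologically conjugate by Lemma~5.115 of \cite{sing2006pisot} (equivalently Lemma~3.10 of \cite{lee2003consequences}). The one point that requires a word of care is that pure point spectrum is a \emph{measure-theoretic} invariant, so a topological conjugacy is a priori insufficient. However, both flows are uniquely ergodic, so the conjugacy necessarily sends the unique invariant measure of one system to that of the other and is therefore simultaneously a measure-theoretic isomorphism. Since pure point spectrum is preserved under measure-theoretic isomorphism, $(\mathbb{X}(\mathcal{T}_{\Lambda}),\mathbb{R})$ inherits it, which is the assertion.

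The main obstacle does not lie in this final assembly but upstream: the entire argument rests on Theorem~\ref{trulytiling}, whose proof (the perturbation, special-cylinder, and weak-mixing machinery of Sections~\ref{perturbation}--\ref{distorsion}) is what delivers $d_{\mathrm{cov}}=1$ and hence condition (1). Granting that, the present theorem is a short bookkeeping step, with the only genuine subtlety being the promotion of the topological conjugacy to a measure-theoretic one via unique ergodicity so that the spectral invariant actually passes between the two systems.
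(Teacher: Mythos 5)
Your proposal follows essentially the same route as the paper: Lemma~\ref{associated_tiling} (via Theorem~\ref{trulytiling} and Theorem~\ref{consistency}) supplies condition (1) of Theorem~\ref{pointset}, which gives pure point spectrum for $(\mathbb{X}(\underline{\Lambda}),\mathbb{R})$, and the topological conjugacy of Lemma~5.115 of \cite{sing2006pisot} transfers it to $(\mathbb{X}(\mathcal{T}_{\Lambda}),\mathbb{R})$. Your explicit remark that unique ergodicity of both flows is what promotes the topological conjugacy to a measure-theoretic isomorphism is a correct point the paper leaves implicit (having noted the unique ergodicity of each system earlier), and it does not change the argument.
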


\begin{thm}[\S 4 of \cite{akiyama2015pisot} and Theorem~3.1 of \cite{clark2003size}]
\label{botheq}
For an irreducible Pisot substitution, 
the substitution dynamical system has pure point spectrum if and only if the 
substitution tiling dynamical system does so.
\end{thm}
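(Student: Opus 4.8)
The plan is to realize the substitution tiling dynamical system $(\mathbb{X}(\mathcal{T}_{\Lambda}),\mathbb{R})$ as a \emph{special flow} (suspension) over the substitution dynamical system $(\overline{\mathcal{O}_{\sigma}(u)},S,\nu)$ and then transfer pure point spectrum across the suspension. Indeed, the tiles of $\mathcal{T}_{\Lambda}$ are translates of the natural intervals $A_{a}=[0,\langle\vector{e}_{a},\vector{v}\rangle]$, so recording the bi-infinite sequence of tile types recovers a point of the subshift $\overline{\mathcal{O}_{\sigma}(u)}$, while the flow moves continuously through a tile before passing to the next. This presents $\mathbb{X}(\mathcal{T}_{\Lambda})$ as the suspension of $(\overline{\mathcal{O}_{\sigma}(u)},S)$ under the roof function
\[
r(\omega)=\langle\vector{e}_{\omega_{0}},\vector{v}\rangle,
\]
and unique ergodicity of both systems identifies the flow-invariant measure with the one built from $\nu$ and Lebesgue measure along the flow direction.

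First I would record the spectral dictionary for special flows: a frequency $\lambda\in\mathbb{R}$ is an eigenvalue of the flow precisely when the multiplicative cohomological equation
\[
f(S\omega)=e^{2\pi i\lambda r(\omega)}f(\omega)
\]
admits a measurable unimodular solution (up to twisting by an eigenvalue of $S$), and the flow has pure point spectrum exactly when the resulting eigenfunctions, lifted to the suspension, span $L^{2}$. The Pisot hypothesis enters decisively through the self-similarity $\langle f(\sigma(a)),\vector{v}\rangle=\alpha\langle\vector{e}_{a},\vector{v}\rangle$, valid because $\vector{v}$ is the left Perron eigenvector: the roof function scales by $\alpha$ under desubstitution, so that Birkhoff sums of $r$ along return words are controlled by the contracting conjugates $\vert\alpha_{i}\vert<1$, and the admissible frequencies $\lambda$ form a module over $\mathbb{Z}[\alpha^{-1}]$. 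This scaling is what allows one to promote each eigenfunction of $S$ to a solution of the cohomological equation above, giving the implication that pure point spectrum of the substitution system forces pure point spectrum of the tiling flow.

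For the converse I would pass to the cross-section cut out by the tile boundaries; since the alphabet is finite the roof function is bounded and takes only the finitely many values $\langle\vector{e}_{a},\vector{v}\rangle$, so the first-return map is exactly $S$ and a group-rotation model for a pure point flow restricts to a pure point model for $S$. The step I expect to be the main obstacle is rather the forward implication: for a \emph{non-constant} roof function the transfer of pure point spectrum to the suspension is genuinely delicate and can fail for badly chosen lengths, introducing continuous spectrum---this is Clark and Sadun's ``size matters'' phenomenon. The resolution, which secures the cohomology-solving step invoked above, is that the \emph{canonical} lengths $\langle\vector{e}_{a},\vector{v}\rangle$ are admissible: lying in $\mathbb{Q}(\alpha)$ and respecting the $\alpha$-scaling, they place the eigenvalue modules of the two systems in measure-preserving correspondence, so that the suspension neither creates nor destroys eigenvalues. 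I would therefore isolate this admissibility of the Perron lengths as the single technical lemma to establish, after which both implications follow from the special-flow dictionary together with the cited references.
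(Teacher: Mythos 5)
First, a point of reference: the paper does not prove Theorem~\ref{botheq} at all --- it is imported, with the attribution in the theorem header, from \S 4 of \cite{akiyama2015pisot} and Theorem~3.1 of \cite{clark2003size}. So the comparison must be with the argument in those references. Your framework is the right one and matches how that argument begins: $(\mathbb{X}(\mathcal{T}_{\Lambda}),\mathbb{R})$ is indeed the suspension of $(\overline{\mathcal{O}_{\sigma}(u)},S,\nu)$ with roof $r(\omega)=\langle \vector{e}_{\omega_{0}},\vector{v}\rangle$, the eigenvalue criterion via the twisted cohomological equation is correct, the self-similarity $\langle f(\sigma(a)),\vector{v}\rangle=\alpha\langle\vector{e}_{a},\vector{v}\rangle$ is the right use of the Pisot hypothesis, and you correctly identify the Clark--Sadun ``size matters'' phenomenon as the crux.

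There is, however, a genuine gap in your converse direction, and the key lemma of your forward direction is asserted rather than proved. Restricting a flow eigenfunction to the canonical cross-section yields a function $f_{\lambda}$ satisfying $f_{\lambda}(S\omega)=e^{2\pi i\lambda r(\omega)}f_{\lambda}(\omega)$; since $r$ is non-constant these are \emph{not} eigenfunctions of $S$, and products $f_{\lambda}\overline{f_{\lambda'}}$ remain twisted by $e^{2\pi i(\lambda-\lambda')r}$, so there is no direct untwisting. The principle you invoke --- that a group-rotation model for a pure point flow restricts to a pure point model for the return map of a section --- is not valid in general for sections of flows, so as stated this step fails. What the cited references actually do sidesteps both difficulties: for an \emph{irreducible} Pisot substitution the span of the non-Perron eigenvectors is entirely contracting and complements the Perron line, so the constant length vector $(1,\dots,1)$ differs from a positive multiple $c\vector{v}$ by an asymptotically negligible vector; Theorem~3.1 of \cite{clark2003size} then gives a topological conjugacy (up to a constant time change, harmless for spectrum) between the Perron-length tiling flow and the constant-roof suspension, and for a constant roof the equivalence with the subshift is elementary, since the time-one map is $S\times\mathrm{id}$ on $\overline{\mathcal{O}_{\sigma}(u)}\times[0,1)$, which is pure point exactly when $S$ is. Your proposed ``admissibility of the Perron lengths'' lemma is in effect this asymptotic-negligibility statement, but you defer it to the references rather than prove it; without it, or without the constant-roof reduction that replaces your cohomology-solving and cross-section steps, neither of your implications closes. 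Admittedly, since the paper itself only cites the result, deferring to the references is consistent with the paper's treatment --- but then the intermediate steps you do spell out should be the correct ones, and the cross-section step is not.
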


Combining Theorem~\ref{substiling} with Theorem~\ref{botheq},
we conclude the following.
\begin{thm}For an irreducible Pisot substitution,
the substitution dynamical system $(\overline{\mathcal{O}_{\sigma}(u)},S,\nu)$
has  pure point spectrum (or pure discrete spectrum).
\end{thm}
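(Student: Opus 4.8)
The plan is to obtain the conclusion as an immediate corollary of the two preceding theorems, since all of the substantive work has already been carried out. First I would record that, by the long argument culminating in Theorem~\ref{trulytiling}, the collection $\mathcal{T}=\{\mathcal{R}_{\sigma}(a)+\gamma:(\gamma,a)\in\Gamma\}$ is a genuine tiling of $K_{\sigma}$ (not merely a multiple tiling), i.e. $d_{\mathrm{cov}}=1$. This is the heart of the matter and the only place where the weak mixing of $(\Sigma_{A},T,m)$ and the adelic Garsia estimate are used.

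Next I would pass from this tiling statement to the spectral conclusion for the \emph{tiling} dynamical system. By Theorem~\ref{consistency} the attractor $\underline{\Omega}$ of $\Theta^{\star}$ coincides with $(\mathcal{R}_{\sigma}(a))_{a\in\mathcal{A}}$ and the windows $\underline{\Upsilon}$ coincide with $(\Gamma_{a})_{a\in\mathcal{A}}$, so Theorem~\ref{trulytiling} shows via Lemma~\ref{associated_tiling} that $\underline{\Upsilon}+\underline{\Omega}=\mathcal{T}$ is a tiling. Feeding this into the equivalence of Theorem~\ref{pointset} yields that the point set dynamical system $(\mathbb{X}(\underline{\Lambda}),\mathbb{R})$ has pure point spectrum, and transporting this across the topological conjugacy with $(\mathbb{X}(\mathcal{T}_{\Lambda}),\mathbb{R})$ gives Theorem~\ref{substiling}: the substitution tiling dynamical system has pure point spectrum.

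Finally I would invoke Theorem~\ref{botheq}, which asserts that for an irreducible Pisot substitution the substitution dynamical system has pure point spectrum if and only if the substitution tiling dynamical system does. Since we have just established the latter, the former follows, completing the proof that $(\overline{\mathcal{O}_{\sigma}(u)},S,\nu)$ has pure discrete spectrum.

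There is no real obstacle remaining at this stage: the only inputs are the already-proven Theorem~\ref{substiling} and the cited equivalence Theorem~\ref{botheq}. If anything, the single point requiring care is purely bookkeeping---making sure that the notion of ``pure point (discrete) spectrum'' is used consistently for the $\mathbb{R}$-action on the tiling space and for the $\mathbb{Z}$-action $(S,\nu)$ on the subshift, which is exactly what Theorem~\ref{botheq} is designed to reconcile.
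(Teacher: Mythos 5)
Your proposal is correct and follows exactly the paper's route: the paper deduces this theorem by combining Theorem~\ref{substiling} (itself obtained from Theorem~\ref{trulytiling} via Theorem~\ref{consistency}, Lemma~\ref{associated_tiling}, Theorem~\ref{pointset}, and the topological conjugacy of the point set and tiling systems) with the equivalence of Theorem~\ref{botheq}. Your closing remark about reconciling pure point spectrum for the $\mathbb{R}$-action and the $\mathbb{Z}$-action is precisely the role Theorem~\ref{botheq} plays here.
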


\section{Strong Coincidence Conjecture}\label{strongcc}

In this section, we will show that tiling implies strong coincidence.
For unimodular cases, this has already been established in \cite{ito2006atomic}.

A \textit{patch} of $\Gamma$ is a finite subset of $\Gamma$.
By abuse of language, the subcollection of $\mathcal{T}$
associated with a patch $\Gamma_{0}$
\[
\{ \mathcal{R}_{\sigma}(b)+\gamma:(\gamma,b)\in\Gamma_{0}\}
\]
is also referred to as a \textit{patch}.
A \textit{translation} of the patch $\Gamma_{0}$ 
means 
\[
\{ (\gamma+t,b)\in\Gamma:(\gamma,b)\in\Gamma_{0}\}\quad\mathrm{for\ some}\
t\in K_{\sigma}.
\]

\begin{thm}[\cite{minervino2014geometry},\cite{sing2006pisot} and
\cite{ito2006atomic}]\label{quasi}
$\Gamma$ is quasi-periodic, i.e. for any patch of $\Gamma$
there exists $R>0$ so that
every ball of radius $R$ in $K_{\sigma}$ contains the first coordinates of
a translation of this patch.
\end{thm}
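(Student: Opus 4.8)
The plan is to read off quasi-periodicity from the model-set description of $\Gamma$ assembled in Section~\ref{modern}. I work in the symmetric CPS $(K_{\sigma},\mathbb{R},\Phi(V\cdot\mathbb{Z}[\alpha^{-1}]))$ with its star-map $(\cdot)^{\star}=\pi_{1}\circ(\pi_{2}\vert_{\Phi(V\cdot\mathbb{Z}[\alpha^{-1}])})^{-1}$, which is a group homomorphism on $\mathcal{L}:=\Phi^{\prime}(V\cdot\mathbb{Z}[\alpha^{-1}])$ since it is built from projections and the inverse of an injective restriction. By Theorem~\ref{consistency} the translation set is $\Gamma_{a}=\Upsilon_{a}=\Lambda([0,\langle\vector{e}_{a},\vector{v}\rangle))$, so $(\gamma,a)\in\Gamma$ is equivalent to $\gamma\in\mathcal{L}$ together with $\gamma^{\star}\in[0,\langle\vector{e}_{a},\vector{v}\rangle)$. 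Because $\mathcal{L}$ is a group and $(\cdot)^{\star}$ is additive, translating a patch by $t\in\mathcal{L}$ sends each $\gamma$ to $\gamma+t\in\mathcal{L}$ with internal coordinate $\gamma^{\star}+t^{\star}$; hence a translate stays a subcollection of $\Gamma$ exactly when $\gamma^{\star}+t^{\star}$ lands again in the same window $[0,\langle\vector{e}_{b},\vector{v}\rangle)$ for every member $(\gamma,b)$ of the patch.

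First I would manufacture a reservoir of admissible translations. Given a patch $\Gamma_{0}=\{(\gamma_{i},b_{i})\}_{i=1}^{m}$, set $\epsilon=\min_{1\le i\le m}\bigl(\langle\vector{e}_{b_{i}},\vector{v}\rangle-\gamma_{i}^{\star}\bigr)$, which is strictly positive as a minimum of finitely many strictly positive numbers (each $\gamma_{i}^{\star}<\langle\vector{e}_{b_{i}},\vector{v}\rangle$). For any $t\in\mathcal{L}$ with $t^{\star}\in[0,\epsilon)$ one checks $0\le\gamma_{i}^{\star}\le\gamma_{i}^{\star}+t^{\star}<\gamma_{i}^{\star}+\epsilon\le\langle\vector{e}_{b_{i}},\vector{v}\rangle$ for every $i$, so $\{(\gamma_{i}+t,b_{i})\}_{i}$ is a genuine subcollection of $\Gamma$. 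The one-sided requirement $t^{\star}\ge 0$ is precisely what protects the points sitting on the left endpoint $\gamma_{i}^{\star}=0$, while the choice of $\epsilon$ protects those near the right endpoint.

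The set of these translations is $P:=\Lambda([0,\epsilon))=\{t\in\mathcal{L}:t^{\star}\in[0,\epsilon)\}$, an inter model set for the regular window $[0,\epsilon]$ since $\Lambda((0,\epsilon))\subset P\subset\Lambda([0,\epsilon])$. By the same reasoning that makes each $\Gamma_{a}$ a Delone set (Lemma~6.6 of \cite{minervino2014geometry}), $P$ is relatively dense: there is $R_{0}>0$ with $P\cap\overline{B(Y,R_{0})}\neq\emptyset$ for all $Y\in K_{\sigma}$. Finally, writing $\delta=\max_{i}d_{K}(\gamma_{i},\gamma_{1})$ for the patch diameter and putting $R=R_{0}+\delta$, I argue as follows: for an arbitrary ball $B(X,R)$, translation invariance of $d_{K}$ lets me pick $t\in P\cap\overline{B(X-\gamma_{1},R_{0})}$, so that $d_{K}(\gamma_{i}+t,X)\le d_{K}(\gamma_{i},\gamma_{1})+d_{K}(\gamma_{1}+t,X)\le\delta+R_{0}=R$ for every $i$. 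Thus the admissible translate $\{(\gamma_{i}+t,b_{i})\}_{i}$ has all its first coordinates inside $B(X,R)$, which is exactly quasi-periodicity.

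The window bookkeeping and the covering estimate are routine; the one point demanding care is the boundary behaviour of the half-open windows, which forces both the one-sided restriction $t^{\star}\ge 0$ and the patch-dependent choice of $\epsilon$. The other substantive ingredient is the relative density of $P$, which rests on knowing that a model set with nonempty-interior window is Delone --- precisely the fact reused from \cite{minervino2014geometry}; note that no use of the tiling property from Theorem~\ref{trulytiling} is actually required here, only the cut-and-project structure.
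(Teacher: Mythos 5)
Your proof is correct, but note that the paper itself offers no argument for Theorem~\ref{quasi}: it imports the statement wholesale from \cite{minervino2014geometry}, \cite{sing2006pisot} and \cite{ito2006atomic}, so there is no internal proof to compare against. What you have written is essentially the standard repetitivity argument for (inter) model sets that underlies those references, specialized to the symmetric CPS $(K_{\sigma},\mathbb{R},\Phi(V\cdot\mathbb{Z}[\alpha^{-1}]))$. The key steps all check out: the identification $\Gamma_{a}=\Upsilon_{a}=\Lambda([0,\langle \vector{e}_{a},\vector{v}\rangle))$ from Theorem~\ref{consistency} converts membership in $\Gamma$ into a window condition on the internal coordinate; the star-map is indeed additive, since $\Phi(V\cdot\mathbb{Z}[\alpha^{-1}])$ is a subgroup and $\pi_{2}$ restricted to it is an injective homomorphism; and your one-sided slack $t^{\star}\in[0,\epsilon)$ with $\epsilon=\min_{i}\bigl(\langle\vector{e}_{b_{i}},\vector{v}\rangle-\gamma_{i}^{\star}\bigr)>0$ correctly handles both endpoints of the half-open windows, which is the one place a careless version of this argument would fail. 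The only substantive input is the relative density of $\Lambda([0,\epsilon))$; this is the general fact that in a CPS a model set whose window has nonempty interior is relatively dense, which is the mechanism behind Lemma~6.6 of \cite{minervino2014geometry} --- the appeal is legitimate, though you should cite the general model-set statement rather than that lemma itself, whose wording covers only the specific windows $[0,\langle\vector{e}_{a},\vector{v}\rangle)$. Two cosmetic points: your final estimate places the translated first coordinates in the \emph{closed} ball of radius $R_{0}+\delta$, so take $R=R_{0}+\delta+1$ if the ball in the statement is read as open; and your parenthetical remark that Theorem~\ref{trulytiling} is not used deserves emphasis, since Theorem~\ref{quasi} is later combined with the tiling result in Section~\ref{strongcc}, and your proof confirms there is no circularity. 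What your route buys over the paper's bare citation is a self-contained proof whose only external inputs are Theorem~\ref{consistency} and the Delone property of model sets.
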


It is easy to see that
$(\pi_{2}\times id)\circ T^{-k}_{\rm ext}(\vector{0},a)\subset \Gamma$,
where $id$ means the identity map from $\mathcal{A}$ to itself

For $\bf{B}\subset K_{\sigma}$, define
\[
\Gamma_{\bf{B}}=\{ (\gamma, b)\in \Gamma: \gamma\in \bf{B}\}.
\]

\begin{lem}\label{ball}
For each $a\in\mathcal{A}$ and $k\geq 1$, 
there exists a ball $\bf{B}$ so that
$\Gamma_{\bf{B}}\subset (\pi_{2}\times id)
\circ  T^{-k}_{\rm ext}(\vector{0},a)$ and that
the radius of $\bf{B}$ tends to infinity
as $k\to\infty$.
\end{lem}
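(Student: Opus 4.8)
The plan is to realise the finite set $(\pi_{2}\times\mathrm{id})\circ T^{-k}_{\rm ext}(\vector{0},a)$ as an exact tiling of a dilate of the subtile $\mathcal{R}_{\sigma}(a)$, and then to show that this dilate contains arbitrarily large balls. Applying the dilation $\alpha^{-k}$ to the set equation (\ref{seteqI}) with $m=k$ and using $\Phi^{\prime}=\pi_{2}\circ\Phi$, I obtain
\[
\alpha^{-k}\mathcal{R}_{\sigma}(a)=\bigcup_{(\gamma,b)\in(\pi_{2}\times\mathrm{id})\circ T^{-k}_{\rm ext}(\vector{0},a)}\big(\mathcal{R}_{\sigma}(b)+\gamma\big),
\]
a union which is disjoint in measure. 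By the observation preceding the lemma every pair occurring here lies in $\Gamma$, so each $\mathcal{R}_{\sigma}(b)+\gamma$ is a genuine tile of $\mathcal{T}$, which is a tiling by Theorem~\ref{trulytiling}.

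Next I would manufacture the ball. Since $\mathcal{R}_{\sigma}(a)=\overline{\mathrm{int}\mathcal{R}_{\sigma}(a)}$ by (\ref{ca}), it contains a ball $B(X_{a},r_{a})$ with $r_{a}>0$. The decisive geometric feature is that multiplication by $\alpha$ contracts every coordinate of $K_{\sigma}$: the expanding place $v_{1}$ has been removed from $\mathfrak{M}$, so $\vert\alpha\vert_{v}<1$ for every $v\in\mathfrak{M}$, in accordance with $\mu(\alpha B)=\alpha^{-1}\mu(B)$ in (\ref{haar}). Consequently $\alpha^{-k}$ stretches the max-metric $d_{K}$: each of the finitely many coordinate scaling factors exceeds $1$ and grows geometrically in $k$, so there is $\kappa_{k}>1$ with $\kappa_{k}\to\infty$ and
\[
\alpha^{-k}B(X_{a},r_{a})\supseteq B\big(\alpha^{-k}X_{a},\kappa_{k}r_{a}\big).
\]
Fixing a uniform $R_{0}$ with $\mathcal{R}_{\sigma}(b)\subseteq\overline{B(0,R_{0})}$ for all $b\in\mathcal{A}$ (there are finitely many compact subtiles), I set $\mathbf{B}=B(\alpha^{-k}X_{a},\kappa_{k}r_{a}-R_{0})$, whose radius tends to infinity with $k$.

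Finally I would verify $\Gamma_{\mathbf{B}}\subseteq(\pi_{2}\times\mathrm{id})\circ T^{-k}_{\rm ext}(\vector{0},a)$. If $(\gamma,b)\in\Gamma$ with $\gamma\in\mathbf{B}$, then $T:=\mathcal{R}_{\sigma}(b)+\gamma\subseteq\overline{B(\gamma,R_{0})}\subseteq B(\alpha^{-k}X_{a},\kappa_{k}r_{a})\subseteq\alpha^{-k}\mathcal{R}_{\sigma}(a)$. Decomposing $\mu(T)=\sum_{T'}\mu(T\cap T')$ over the patch tiles $T'$ and recalling that distinct tiles of a tiling meet in a null set, I see that if $T$ were not itself one of the patch tiles every summand would vanish, forcing the contradiction $\mu(T)=0$ (note $\mu(T)>0$ by (\ref{ca})). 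Hence $T$ coincides with a patch tile; since $d_{\rm cov}=1$ forces the labelling $(\gamma,b)\mapsto\mathcal{R}_{\sigma}(b)+\gamma$ to be injective on $\Gamma$, the pair $(\gamma,b)$ itself lies in the patch. The main obstacle is precisely this last step---passing from ``the tile sits inside the dilated region'' to ``its $\Gamma$-label belongs to the patch''---which rests on the tiling property of Theorem~\ref{trulytiling}, the uniform diameter bound $R_{0}$, and the expansion estimate for $\alpha^{-k}$ in the adelic metric.
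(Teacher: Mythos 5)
Your proposal is correct and follows essentially the same route as the paper: dilate the set equation (\ref{seteqI}) so that the patch $(\pi_{2}\times\mathrm{id})\circ T^{-k}_{\rm ext}(\vector{0},a)$ tiles $\alpha^{-k}\mathcal{R}_{\sigma}(a)$, use the expansion of $\alpha^{-k}$ at every place of $\mathfrak{M}$ to place arbitrarily large balls inside it, shrink the ball by a uniform bound on tile diameters, and invoke Theorem~\ref{trulytiling} to force every $\Gamma$-tile meeting the shrunken ball into the patch. The only difference is packaging: the paper argues directly that a non-patch tile misses the large ball and pushes its translation vector out by the triangle inequality, while you argue contrapositively via the measure decomposition and label injectivity, which if anything spells out steps the paper leaves terse.
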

\begin{proof}
Since $\mathcal{R}_{\sigma}(a)$ is interior-dense, one can choose
a sequence of balls $B(Z_{k},R_{k})$ contained in $\alpha^{-k}\mathcal{R}_{\sigma}(a)$
so that the radius $R_{k}$ goes to infinity as $k\to\infty$.
For each $k\geq 1$, it follows by (\ref{iterate}) that
\[
\{ \mathcal{R}_{\sigma}(b)+\pi_{2}(Y^{*}): (Y^{*},b)\in T^{-k}_{\rm ext}(\vector{0},a) \}
\]
is a patch. By the set equation (\ref{seteqI}), this patch tiles
$\alpha^{-k}\mathcal{R}_{\sigma}(a)$ and hence $B(Z_{k},R_{k})$.
Since $\mathrm{int}\mathcal{R}_{\sigma}(a^{\prime})\cap
\mathrm{int}\mathcal{R}_{\sigma}(a)=\emptyset$ for $a^{\prime}\neq a$ (tiling),
it follows that
\[
(\mathcal{R}_{\sigma}(b)+\pi_{2}(Y^{*}))\cap B(Z_{k},R_{k})=\emptyset
\]
for any $(Y^{*},b)\not\in T^{-k}_{\rm ext}(\vector{0},a)$ with $(\pi_{2}(Y^{*}),b)\in\Gamma$.
This implies that $d_{K}(Z_{k},Z+\pi_{2}(Y^{*}))\geq R_{k}$ for $Z\in
\mathcal{R}_{\sigma}(b)$.
If $C_{1}>0$ is taken so that $\mathcal{R}_{\sigma}\subset B(0,C_{1})$,
then $d_{K}(0,Z)\leq C_{1}$, and the triangle inequality
\[
d_{K}(Z_{k},Z+\pi_{2}(Y^{*}))\leq d_{K}(Z_{k},Z+Z_{k})+d_{K}(Z+Z_{k},Z+\pi_{2}(Y^{*}))
\]
\[
=d_{K}(0,Z)+d_{K}(Z_{k},\pi_{2}(Y^{*}))
\]
suggests that
$\pi_{2}(Y^{*})\not\in B(Z_{k},R_{k}-C_{1})$.
Consequently
$\Gamma_{B(Z_{k},R_{k}-C_{1})}\subset (\pi_{2}\times id)
\circ  T^{-k}_{\rm ext}(0,a)$ and $R_{k}-C_{1}\rightarrow\infty$
as $k\rightarrow\infty$,.
\end{proof}

\begin{thm}Let $\sigma$ be an irreducible Pisot substitution.
Then $\sigma$ satisfies the strong coincidence condition.
\end{thm}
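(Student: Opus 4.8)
The plan is to recast the strong coincidence condition as a statement about the extended prefix–suffix map $T^{-k}_{\rm ext}$, and then to realize that statement by transporting a suitable two–tile patch using the quasi-periodicity of $\Gamma$ (Theorem~\ref{quasi}) together with Lemma~\ref{ball}, the latter being where the tiling of Theorem~\ref{trulytiling} enters. First I would record the combinatorial reformulation. By \eqref{iterate},
\[
(\pi_{2}\times id)\circ T^{-k}_{\rm ext}(\vector{0},i)
=\bigcup_{\sigma^{k}(c)=pis}\{(\Phi^{\prime}(\alpha^{-k}\langle f(p),\vector{v}\rangle),c)\},
\]
so if a single $\gamma\in K_{\sigma}$ satisfies $(\gamma,a)\in(\pi_{2}\times id)\circ T^{-k}_{\rm ext}(\vector{0},i)$ and $(\gamma,b)\in(\pi_{2}\times id)\circ T^{-k}_{\rm ext}(\vector{0},i)$, then there are prefixes $p,p^{\prime}$ with $\sigma^{k}(a)=pis$, $\sigma^{k}(b)=p^{\prime}is^{\prime}$ and $\Phi^{\prime}(\langle M_{\sigma}^{-k}f(p),\vector{v}\rangle)=\Phi^{\prime}(\langle M_{\sigma}^{-k}f(p^{\prime}),\vector{v}\rangle)$, using $\alpha^{-k}\langle f(p),\vector{v}\rangle=\langle M_{\sigma}^{-k}f(p),\vector{v}\rangle$. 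Since $M_{\sigma}^{-k}f(p),M_{\sigma}^{-k}f(p^{\prime})\in\mathbb{Q}^{n}$, Lemma~\ref{latticeII} forces $f(p)=f(p^{\prime})$, i.e. $\vert p\vert_{j}=\vert p^{\prime}\vert_{j}$ for every $j\in\mathcal{A}$. Hence it suffices, for each pair $(a,b)$ with $a\neq b$ (the case $a=b$ being trivial), to produce $k>0$, $i\in\mathcal{A}$ and $\gamma\in K_{\sigma}$ with $(\gamma,a)$ and $(\gamma,b)$ both lying in $(\pi_{2}\times id)\circ T^{-k}_{\rm ext}(\vector{0},i)$.

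Next I would exhibit the relevant patch. Taking $\vector{w}=\vector{0}\in Z$ shows $(0,a),(0,b)\in\Gamma$, so $\{(0,a),(0,b)\}$ is a patch of $\Gamma$. By quasi-periodicity (Theorem~\ref{quasi}) there is $R>0$ such that every ball of radius $R$ in $K_{\sigma}$ contains the common first coordinate $t$ of a translate $\{(t,a),(t,b)\}\subset\Gamma$ of this patch. Now fix any $i\in\mathcal{A}$ and, using Lemma~\ref{ball}, choose $k$ large enough that the associated ball $\mathbf{B}$ has radius exceeding $R$ and $\Gamma_{\mathbf{B}}\subset(\pi_{2}\times id)\circ T^{-k}_{\rm ext}(\vector{0},i)$. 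Since $\mathbf{B}$ then contains a ball of radius $R$, it contains some $t$ with $(t,a),(t,b)\in\Gamma$, whence $(t,a),(t,b)\in\Gamma_{\mathbf{B}}\subset(\pi_{2}\times id)\circ T^{-k}_{\rm ext}(\vector{0},i)$. Applying the reformulation of the first paragraph with $\gamma=t$ yields the strong coincidence condition for the pair $(a,b)$, and since $(a,b)$ was arbitrary the theorem follows.

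The main obstacle — and the precise point where the tiling of Theorem~\ref{trulytiling} is indispensable — is securing simultaneous control of the two letters $a$ and $b$ at one common translate $\gamma$. Handling $\Gamma_{a}$ and $\Gamma_{b}$ separately would only supply two unrelated translates, which is useless; the resolution is to move the rigid two–element block $\{(0,a),(0,b)\}$, which is exactly what quasi-periodicity provides, while Lemma~\ref{ball} (whose proof rests on the disjointness of the interiors of distinct subtiles, i.e. on tiling) guarantees that an arbitrarily large ball worth of $\Gamma$ sits inside a single $(\pi_{2}\times id)\circ T^{-k}_{\rm ext}(\vector{0},i)$. In writing the argument carefully I would verify that the translate furnished by Theorem~\ref{quasi} genuinely keeps both tiles of the patch inside $\Gamma$, and that the identity $\alpha^{-k}\langle f(p),\vector{v}\rangle=\langle M_{\sigma}^{-k}f(p),\vector{v}\rangle$ legitimizes the invocation of Lemma~\ref{latticeII} on the rational vectors $M_{\sigma}^{-k}f(p)$ and $M_{\sigma}^{-k}f(p^{\prime})$.
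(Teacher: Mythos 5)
Your proposal is correct and follows essentially the same route as the paper: both transport the two-element patch containing $(0,a)$ and $(0,b)$ (the paper uses the patch $\Gamma_{B(0,R_{0})}$, a trivial variant) via quasi-periodicity (Theorem~\ref{quasi}) into the large ball of Lemma~\ref{ball} sitting inside $(\pi_{2}\times id)\circ T^{-k}_{\rm ext}(\vector{0},i)$, and then extract $f(p)=f(p^{\prime})$ from the common translate via Lemma~\ref{latticeII}. Your write-up is in fact slightly more explicit than the paper's in isolating the combinatorial reformulation and in checking $(0,c)\in\Gamma$ via $\vector{w}=\vector{0}$, but there is no substantive difference in the argument.
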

\begin{proof}
Let $B(0,R_{0})$ be a ball of radius $R_{0}>0$ at $0$.
Then $\Gamma_{B(0,R_{0})}$ is a patch since
$\Gamma$ is a multi-component Delone set. 

By Lemma~\ref{ball},
one can take  $k>0$ so large that
the radius of the ball $B$ exceeds $R$ in Theorem~\ref{quasi}
with $\Gamma_{B}\subset (\pi_{2}\times id)
\circ T^{-k}_{\rm ext}(\vector{0},1)$.

The quasi-periodicity of $\Gamma$ (Theorem~\ref{quasi})
yields that the first coordinates of a translation of $\Gamma_{B(0,R_{0})}$
are contained in $B$.
In particular, since $(0,i),(0,j)\in\Gamma_{B(0,R_{0})}$,
there exists a translation $t\in K_{\sigma}$ so that $(t,i), (t,j)\in
\Gamma_{B}\subset(\pi_{2}\times id)
\circ T^{-k}_{\rm ext}(\vector{0},1)$.
This implies that there exist prefixes $P^{(i)}$ of $\sigma^{k}(i)$ and $P^{(j)}$ of $\sigma^{k}(j)$ 
so that
$\sigma^{k}(i)=P^{(i)}1s$ and $\sigma^{k}(j)=P^{(j)}1s^{\prime}$ where $s$ and $s^{\prime}$ 
are suffixes.
Besides
\begin{equation}\label{t}
\begin{split}
t&=\pi_{2}(\alpha^{-k}
\Phi(\langle f(P^{(i)}),\vector{v}\rangle))=
\Phi^{\prime}(\langle M_{\sigma}^{-k}f(P^{(i)}),\vector{v}\rangle) \\
&=
\pi_{2}(\alpha^{-k}
\Phi(\langle f(P^{(j}),\vector{v}\rangle))=
\Phi^{\prime}(\langle M_{\sigma}^{-k}f(P^{(j)}),\vector{v}\rangle).
\end{split}
\end{equation}
Corollary~\ref{latticeII} shows that
(\ref{t}) implies that $f(P^{(i)})=f(P^{(j)})$.  
This completes the proof.
\end{proof}


\bibliographystyle{amsplain}
\bibliography{myref}

\end{document}